\providecommand{\U}[1]{\protect\rule{.1in}{.1in}}
\renewcommand{\phi}{\varphi}
\newtheorem{theorem}{Theorem}
\newtheorem{corollary}[theorem]{Corollary}
\newtheorem{definition}[theorem]{Definition}
\newtheorem{example}[theorem]{Example}
\newtheorem{lemma}[theorem]{Lemma}
\newtheorem{remark}[theorem]{Remark}
\email[Mar\'{\i}a J. Garrido-Atienza]{mgarrido@us.es}
\email[Bj{\"o}rn Schmalfu{\ss }]{bjoern.schmalfuss@uni-jena.de}
\email[Jose Valero]{jvalero@umh.es}
\subjclass[2000]{Primary: 60H15; Secondary: 26A33, 37H05.}
\keywords{fractional Brownian motion, fractional derivatives, multivalued random dynamical systems}
\begin{document}
\title[Setvalued dynamical systems for SPDEs driven by fBm]{Setvalued dynamical systems for stochastic evolution equations driven by
fractional noise}
\author{M.J. Garrido-Atienza}
\address[Mar\'{\i}a J. Garrido-Atienza]{Dpto. Ecuaciones Diferenciales y An\'alisis Num\'erico\\
Universidad de Sevilla, Apdo. de Correos 1160, 41080-Sevilla, Spain}
\author{B. Schmalfu{\ss }}
\address[Bj{\"o}rn Schmalfu{\ss }]{Institut f\"{u}r Stochastik\\
Friedrich Schiller Universit{\"a}t Jena, Ernst Abbe Platz 2, D-77043\\
Jena, Germany\\
 }
\author{J. Valero}
\address[J. Valero]{Centro de Investigaci\'on Operativa, Universidad Miguel
Hern\'andez, Avda. de la Universidad, s/n, 03202-Elche, Spain}

\begin{abstract}
We consider Hilbert-valued evolution equations driven by H\"{o}lder paths with
H\"{o}lder index greater than 1/2, which includes the case of fractional
noises with Hurst parameters in (1/2,1). The assumptions of the drift term
will not be enough to ensure the uniqueness of solutions. Nevertheless,
adopting a multivalued setting, we will prove that the set of all solutions
corresponding to the same initial condition generates a (multivalued)
nonautonomous dynamical system $\Phi$. Finally, to prove that $\Phi$ is
measurable (and hence a (multivalued) random dynamical system), we need to
construct a new metric dynamical system that models the noise with the
property that the set space is separable.

\end{abstract}
\maketitle

\section{Introduction}

In this article we aim at investigating the following type of evolution
equation
\begin{equation}
\label{P}du(t)=(Au(t)+F(u(t)))dt+G(u(t))d\omega(t),\quad u(0)=u_{0} \in V,
\end{equation}
in a Hilbert space $V$, where the driving input is given by a H\"older
continuous function with H\"older index greater than $1/2$, the operator $A$
generates an analytic semigroup, and the nonlinear mappings $F$ and $G$ will
be Lipschitz continuous.

The main example will be given by stochastic evolution equations driven by a
fractional Brownian motion $B^{H}$ with Hurst parameter $H\in(1/2,1)$.\newline

One of the main features of this paper is that the scarce regularity of $G$
will not be enough to ensure uniqueness of solutions of (\ref{P}), hence we
shall adopt the setvalued setting. When $\omega$ is a Brownian motion (which
corresponds to the case $B^{1/2}$), this kind of problems have been already
tackled. However, the corresponding solutions of these problems are defined
only almost surely, which contradicts the cocycle property. In other words,
the well-known Ito integral produces exceptional sets that depend on the
initial condition and it is not known therefore how to define a random
dynamical system if more than countably many exceptional sets occur. As a
result, it is not known in general if a stochastic evolution equation driven
by Brownian motion generates a random dynamical system, even in the univalued
case where uniqueness of solutions holds true. This general open problem has
been solved in very particular situations where the noise is additive or
linear multiplicative. In those cases, the technique consists in transforming
the stochastic equation into a random equation where the noise acts just as a
parameter, which in turns implies that deterministic tools can be used to deal
with the random equation. This method came out in the 90's of the last century
and since then has been exploited in many papers, as for instance,
\cite{CHSchV}, \cite{G}, \cite{GLR}, \cite{Gu} in the case of uniqueness of
solutions, and \cite{CGSchV10}, \cite{CLV} in the multivalued setting.\newline

Recently, many efforts have been done to go beyond the Brownian motion case
and to consider other types of noises that exhibit different properties, as is
the case of a fractional Brownian motion $B^{H}$ with Hurst parameter
$H\in(0,1)$. When $H=1/2$ this process reduces to the Brownian motion but, in
the rest of cases, $B^{H}$ is not a semimartingale and it is not a Markov
process. It would be impossible to mention here the huge amount of papers that
during the last decade came out with different investigations related to
equations driven by $B^{H}$. To name a few, for the case of regular fractional
Brownian motions that corresponds to $H>1/2$ (that will be the considered case
in this paper), we refer to \cite{GrAnh}, \cite{GuLeTin}, \cite{MasNua03},
\cite{MasSchm04}, \cite{NuaRas02} and \cite{TinTuVi}. To our knowledge, only
in one of these papers, to be more precise in \cite{MasNua03}, there is one
result corresponding to existence and not uniqueness of solutions, but
assuming between other assumptions that the initial condition is more regular
(see Section \ref{existence} for a deeper discussion). \newline

In spite of the non-uniqueness of solutions, the assumptions on the nonlinear
mapping $G$ will make it possible for us to define a pathwise integral against
$\omega$, based on a generalization of the Young integral given by the
so-called fractional derivatives, which does not produce exceptional sets in
contrast to Ito integral. We will take advantage of this property in order to
analyze the dynamical system generated by the solutions of the above problem
without making a previous transformation of it into a random equation, that is
to say, we will always work with (\ref{P}). In the univalued setting there
have been some progresses in this direction, namely, once the existence and
uniqueness of solutions have been established, the investigations of the
generation of the (univalued) random dynamical system have been carried out,
see \cite{CGGSch14}, \cite{GLS10}, \cite{GMSch} among others. This property
opens the door to investigate the longtime behavior of solutions by studying
the random attractor, invariant stable/unstable manifolds, random fixed
points, etc., using for that the theory of random dynamical systems, see the
monograph \cite{Arn98} for a comprehensive description. In the univalued case
the reader could find some papers dealing with these objects, as \cite{GGSch}
and \cite{GMSch}, where the random attractor have been studied, or
\cite{DGNSch17} and \cite{GNSch16} where the exponential stability of the
trivial solution have been considered.\newline

No matter the univalued or the multivalued settings, the random dynamical
system consists of an ergodic metric dynamical system, that models the noise,
and a cocycle mapping given by the solution(s) associated to an initial
condition. There have been some papers in which the ergodicity of the metric
dynamical system associated with the fractional Brownian motion has been
investigated, see for instance \cite{GSch11} and \cite{MasSchm04}. But as far
as the multivalued setting is concerned these previous investigations cannot
be applied, since the set $\Omega$ where $\omega$ belongs should be, on the
one hand, a space of H\"older continuous functions allowing the construction
of the stochastic integral, and, on the other hand, a separable space that
makes it possible to prove the measurability of the cocycle. Therefore, this
paper contains the construction of a new ergodic metric dynamical system
modeling the fractional Brownian motion, valid for any Hurst parameter, with
the mentioned separability property. \newline

The paper is structured as follows. In Section \ref{preli} we introduce the
integral with integrator given by a H\"{o}lder function with H\"{o}lder index
bigger than 1/2, as well as its main properties. Section \ref{existence}
addresses the existence of solutions to (\ref{P}) and for that we shall apply
Schauder's fixed point theorem. Further, Section \ref{MDS} is concerned with a
metric dynamical systems that model the fractional Brownian motion. In
particular, we will construct an ergodic metric dynamical system with the
property that the set space is separable. Finally, Section \ref{mnds}
investigates the generation of a multivalued nonautonomous dynamical system by
the set of all solutions to (\ref{P}). This multivalued mapping will be shown
to be measurable, and hence it is a multivalued random dynamical system, for
which we need to study its upper semicontinuity with respect to all its
variables, a result that relies upon the separability of the metric dynamical
system built in Section \ref{MDS}. The paper finally includes an example of a
stochastic parabolic partial differential equation for which our theory can be applied.

\section{Preliminaries}

\label{preli}

In this section we introduce the assumptions of the different terms of
equation (\ref{P}) and give the definition and some properties of the integral
for H{\"o}lder continuous integrators with H{\"o}lder exponent bigger
than$\,1/2$, for which we borrow the construction carried out recently in
\cite{CGGSch14}.\newline

Throughout this paper ($V$, $\|\cdot\|$, $(\cdot,\cdot)$) is a separable
Hilbert space.

Assume that $-A$ is a strictly positive and symmetric operator with a compact
inverse, generating an analytic semigroup $S$ on $V$. In particular the spaces
$V_{\delta}:=D((-A)^{\delta})$ with norm $\|\cdot\|_{V_{\delta}}$ for
$\delta\ge0$ are well-defined. Note that $V=V_{0}$ and $V_{\delta_{1}}$ is
compactly embedded in $V_{\delta_{2}}$ for $\delta_{1} \geq\delta_{2}\geq0$.
Denote by $(e_{i})_{i\in\mathbb{N} }$ the complete orthonormal base in $V$
generated by the eigenelements of $-A$ with associated eigenvalues
$(\lambda_{i})_{i\in\mathbb{N} }$.\newline

Let $L(V_{\delta},V_{\gamma})$ denote the space of continuous linear operators
from $V_{\delta}$ into $V_{\gamma}$. Then there exists a constant $c_{S}>0$
such that
\begin{equation}
\Vert S(t)\Vert_{L(V,V_{\gamma})}=\Vert(-A)^{\gamma}S(t)\Vert_{L(V)}\leq
c_{S}e^{-\lambda t}t^{-\gamma}\qquad\text{for }\gamma>0, \label{eq1}%
\end{equation}%
\begin{equation}
\Vert S(t)-\mathrm{id}\Vert_{L(V_{\sigma},V_{\theta})}\leq c_{S}%
t^{\sigma-\theta},\quad\text{for }\theta\geq0,\quad\sigma\in\lbrack
\theta,1+\theta]. \label{eq2}%
\end{equation}
The constant $c_{S}$ may depend on $t$ as well as the parameters $\gamma$,
$\sigma$ and $\theta$, and can change from line to line. In general, we will
only emphasize the dependence on the semigroup. Moreover, in (\ref{eq1})
$\lambda$ is a positive constant such that $\lambda\leq\lambda_{1}$. From
these two inequalities it is straightforward to derive that
\begin{equation}%
\begin{split}
&  \Vert S(t-r)-S(t-q)\Vert_{L(V_{\delta},V_{\gamma})}\leq c(r-q)^{\alpha
}(t-r)^{-\alpha-\gamma+\delta},\\
&  \Vert S(t-r)-S(s-r)-S(t-q)+S(s-q)\Vert_{L(V)}\leq c(t-s)^{\beta
}(r-q)^{\gamma}(s-r)^{-(\beta+\gamma)},
\end{split}
\label{eq30}%
\end{equation}
for $0\leq q\leq r\leq s\leq t$. As usual, $L(V)$ denotes the space $L(V,V)$.

Let $C^{\beta}([T_{1},T_{2}],V)$ be the Banach space of H{\"o}lder continuous
functions with exponent $\beta>0$ having values in $V$. A norm on this space
is given by%
\[
\|u\|_{\beta}=\|u\|_{\beta,T_{1},T_{2}}=\|u\|_{\infty, T_{1},T_{2}}+\left|  \!
\left|  \! \left|  u \right|  \! \right|  \! \right|  _{\beta,T_{1},T_{2}%
}:=\sup_{s\in[T_{1},T_{2}]}\|u(s)\|+\sup_{T_{1}\le s<t\le T_{2}}%
\frac{\|u(t)-u(s)\|}{(t-s)^{\beta}}.
\]
$C([T_{1},T_{2}],V)$ denotes the space of continuous functions on
$[T_{1},T_{2}]$ with values in $V$ with finite supremum norm. Since the
properties on the semigroup do not ensure H\"older continuity at zero, we will
work with a modification of the spaces of H\"older continuous functions, in
the sense that the considered norm will be given by
\[
\|u\|_{\beta,\beta}=\|u\|_{\beta,\beta,T_{1},T_{2}}=\|u\|_{\infty, T_{1}%
,T_{2}}+\left|  \! \left|  \! \left|  u \right|  \! \right|  \! \right|
_{\beta,\beta,T_{1},T_{2}}:=\|u\|_{\infty, T_{1},T_{2}}+\sup_{T_{1}< s<t\le
T_{2}}(s-T_{1})^{\beta}\frac{\|u(t)-u(s)\|}{(t-s)^{\beta}}%
\]
and denote by $C^{\beta}_{\beta}([T_{1},T_{2}],V)$ the set of functions $u \in
C([T_{1},T_{2}],V)$ such that $\|u\|_{\beta,\beta} < \infty. $ It is known
that $C^{\beta}_{\beta}([T_{1},T_{2}],V)$ is a Banach space, see
\cite{CGGSch14} and \cite{lunardi}.

For every $\rho>0$ we can consider the equivalent norm%
\begin{align*}
\|u\|_{\beta,\beta;\rho}=\|u\|_{\beta,\beta;\rho,T_{1},T_{2}}  &  =\sup
_{s\in[T_{1},T_{2}]}e^{-\rho(s-T_{1})}\|u(s)\|\\
&  +\sup_{T_{1}< s<t\le T_{2}}(s-T_{1})^{\beta}e^{-\rho(t-T_{1})}%
\frac{\|u(t)-u(s)\|}{(t-s)^{\beta}}.
\end{align*}

Consider now the separable Hilbert space $L_{2}(V)$ of Hilbert-Schmidt
operators from $V$ into $V$ with the usual norm $\|\cdot\|_{L_{2}(V)}$ and
inner product $(\cdot,\cdot)_{L_{2}(V)}$.

In order to define the integral with respect to a H\"older integrator $\omega
$, we introduce fractional derivatives, see \cite{Samko} for a comprehensive
description of this subject. More precisely, we define the left sided
fractional derivative of order $\alpha\in(0,1)$ of a sufficiently regular
function $g$ and the right sided fractional derivative of order $1-\alpha$ of
$\omega_{t-}(\cdot):=\omega(\cdot)-\omega(t)$, given by the expressions
\begin{align*}
\label{fractder}%
\begin{split}
D_{{s}+}^{\alpha}g[r]=  &  \frac{1}{\Gamma(1-\alpha)}\bigg(\frac
{g(r)}{(r-s)^{\alpha}}+\alpha\int_{s}^{r}\frac{g(r)-g(q)}{(r-q)^{1+\alpha}%
}dq\bigg),\\
D_{{t}-}^{1-\alpha} \omega_{{t}-}[r]=  &  \frac{(-1)^{1-\alpha}}{\Gamma
(\alpha)} \bigg(\frac{\omega(r)-\omega(t)}{(t-r)^{1-\alpha}}+(1-\alpha
)\int_{r}^{t}\frac{\omega(r)-\omega(q)}{(q-r)^{2-\alpha}}dq\bigg),
\end{split}
\end{align*}
where $0\le s\le r\le t$ and $\Gamma(\cdot)$ denotes the Gamma function. We
assume that $1-\beta^{\prime}<\alpha<\beta$ and let
\[
[s,t]\ni r\mapsto g(r)\in L_{2}(V),\quad[s,t]\ni r\mapsto\omega(r)\in V
\]
be measurable functions such that $g\in C^{\beta}_{\beta}([0,T], L_{2}(V))$,
$\omega\in C^{\beta^{\prime}} ([0,T],V)$ and
\[
r\mapsto\|D_{s+}^{\alpha}g[r]\|_{L_{2}(V)}\|D_{t-}^{1-\alpha}\omega[r]\|
\]
is Lebesgue integrable. Then for $0\le s\le r\le t \le T$ we can define
\begin{equation}
\label{integral}\int_{s}^{t} g(r)d\omega(r):=(-1)^{\alpha}\sum_{j\in\mathbb{N}
}\bigg(\sum_{i\in\mathbb{N} }\int_{s}^{t} D_{s+}^{\alpha}(e_{j},g(\cdot
)e_{i})_{V}[r]D_{t-}^{1-\alpha}(e_{i},\omega(\cdot))_{ V}[r]dr\bigg)e_{j}.
\end{equation}

This integral is well--defined and it is given by a generalization of the
pathwise integral introduced by Z\"ahle \cite{Zah98}, which was given as an
extension of the Young integral (see \cite{You36}).\newline

In the following result we collect some interesting properties of the integral
with integrator $\omega$. In the sequel, we assume the following constraints
for the different parameters:
\[
1/2<\beta<\beta^{\prime}, \; 1-\beta^{\prime}< \alpha< \beta.
\]
Further, we will also consider $\beta^{\prime}<H$, where $H$ denotes the Hurst
parameter of a fractional Brownian motion.

\begin{lemma}
\label{l3} Let $T>0$, $\omega\in C^{\beta^{\prime}}([0,T],V)$ and $g\in
C^{\beta}_{\beta}([s,t],L_{2}(V))$, for $0\leq s\leq t\leq T$. Then
(\ref{integral}) is well-defined and satisfies the following properties:

\begin{enumerate}
\item[(i)] The norm can be estimated as
\begin{align*}
\bigg\|\int_{s}^{t}g(r)d\omega(r)\bigg\|  &  \leq\int_{s}^{t}\Vert
D_{s+}^{\alpha}g[r]\Vert_{L_{2}(V)}\Vert D_{t-}^{1-\alpha}\omega_{t-}[r]\Vert
dr\\
&  \leq c\Vert g\Vert_{\beta,\beta,s,t}\left\vert \!\left\vert \!\left\vert
\omega\right\vert \!\right\vert \!\right\vert _{\beta^{\prime},s,t}%
(t-s)^{{\beta^{\prime}}},
\end{align*}
where $c$ is a positive constant depending only on $t,\,\beta,\,\beta^{\prime
}$ (see \cite{CGGSch14}).

\item[(ii)] The integral is additive (see \cite{Zah98} Theorem 2.5):
\[
\int_{s}^{\tau} g(r) d\omega(r)+\int_{\tau}^{t} g(r) d\omega(r)=\int_{s}^{t}
g(r) d\omega(r)\quad\text{for }s<\tau<t.
\]

\item[(iii)] For any $\tau\in\mathbb{R} $ it yields
\[
\int_{s}^{t} g(r) d\omega(r)=\int_{s-\tau}^{t-\tau} g(r+\tau) d\theta_{\tau
}\omega(r),
\]
(see \cite{GLS10} Lemma 5), where
\begin{equation}
\label{shift}\theta_{t}\omega(\cdot):=\omega(t+\cdot)-\omega(t)
\end{equation}
is known as the Wiener shift.
\end{enumerate}
\end{lemma}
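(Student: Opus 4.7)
The plan is to treat the three properties in turn: (i) is the only property requiring serious estimation, while (ii) and (iii) reduce to the corresponding scalar results of Z\"ahle applied componentwise in the basis $(e_j,e_i)$, together with a change of variables.

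For (i), the first inequality follows from the definition (\ref{integral}) by combining Parseval's identity with the Hilbert--Schmidt identity $\sum_{i,j}(e_j,Te_i)_V^2=\|T\|_{L_2(V)}^2$ and applying Cauchy--Schwarz to the double series; the interchange of summation and integration is legitimized by the Lebesgue integrability hypothesis stated just before (\ref{integral}). For the second inequality, I would estimate the two fractional derivatives separately. The norm $\|\cdot\|_{\beta,\beta,s,t}$ yields $\|g(r)\|_{L_2(V)}\le\|g\|_{\beta,\beta,s,t}$ and
\[
\|g(r)-g(q)\|_{L_2(V)}\le(q-s)^{-\beta}(r-q)^{\beta}\,\|g\|_{\beta,\beta,s,t},
\]
whence a Beta integral (finite because $\beta<1$ and $\alpha<\beta$) gives $\|D^\alpha_{s+}g[r]\|_{L_2(V)}\le c\,\|g\|_{\beta,\beta,s,t}(r-s)^{-\alpha}$. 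Symmetrically, H\"older continuity of $\omega$ combined with the constraint $\alpha>1-\beta'$ produces $\|D^{1-\alpha}_{t-}\omega_{t-}[r]\|\le c\,\ltn\omega\rtn_{\beta',s,t}(t-r)^{\beta'+\alpha-1}$. Multiplying and integrating over $[s,t]$ reduces to a Beta integral
\[
\int_s^t(r-s)^{-\alpha}(t-r)^{\beta'+\alpha-1}dr=B(1-\alpha,\beta'+\alpha)(t-s)^{\beta'},
\]
delivering the claimed bound.

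For (ii), I would observe that (\ref{integral}) decomposes the vector-valued integral as a doubly-indexed series of scalar integrals $\int_s^t D^\alpha_{s+}(e_j,g(\cdot)e_i)_V[r]\,D^{1-\alpha}_{t-}(e_i,\omega(\cdot))_V[r]\,dr$, each coinciding with Z\"ahle's pathwise Stieltjes integral; Theorem 2.5 of \cite{Zah98} provides additivity termwise, and the absolute convergence granted by (i) allows termwise summation. For (iii), the substitution $r'=r-\tau$ applied directly to the defining integrals of the fractional derivatives gives
\[
D^\alpha_{s+}g[r]=D^\alpha_{(s-\tau)+}g(\cdot+\tau)[r-\tau],\qquad D^{1-\alpha}_{t-}\omega_{t-}[r]=D^{1-\alpha}_{(t-\tau)-}(\theta_\tau\omega)_{(t-\tau)-}[r-\tau],
\]
and substituting into (\ref{integral}) along with the corresponding change of variables in the outer integral yields the identity.

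The main obstacle is the exponent bookkeeping in (i): one must verify simultaneously that the inner integrals defining both fractional derivatives converge (requiring $\alpha<\beta$ and $1-\alpha<\beta'$), and that the outer Beta integral produces exactly the power $(t-s)^{\beta'}$ claimed. All three requirements are secured by the standing constraints $1/2<\beta<\beta'$ and $1-\beta'<\alpha<\beta$ fixed before the lemma.
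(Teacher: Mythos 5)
Your proposal is correct and follows essentially the same route as the paper, which does not prove Lemma \ref{l3} itself but delegates parts (i)--(iii) to the cited references; your Parseval/Cauchy--Schwarz reduction, the two fractional-derivative bounds $\Vert D^{\alpha}_{s+}g[r]\Vert_{L_{2}(V)}\leq c\Vert g\Vert_{\beta,\beta,s,t}(r-s)^{-\alpha}$ and $\Vert D^{1-\alpha}_{t-}\omega_{t-}[r]\Vert\leq c\left\vert\!\left\vert\!\left\vert\omega\right\vert\!\right\vert\!\right\vert_{\beta^{\prime},s,t}(t-r)^{\alpha+\beta^{\prime}-1}$, and the Beta-integral conclusion are exactly the estimates the paper itself invokes later (e.g.\ in Lemma \ref{l6}), and your termwise reduction to Z\"ahle's scalar results for (ii) and the change of variables for (iii) are the standard arguments behind the citations.
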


To end this section we introduce the nonlinear mappings of (\ref{P}). We
assume that $F:V\mapsto V$ is a continuous function with at most linear
growth, that is, there exist $c_{F},\,L_{F}>0$ such that
\[
\|F(u)\| \leq c_{F} +L_{F}\|u\|,\quad\text{for } u\in V,
\]
and $G:V \mapsto L_{2}(V)$ is Lipschitz continuous with Lipschitz constant
$L_{G}$. Denoting $c_{G}=\|G(0)\|_{L_{2}(V)}$ we then have
\[
\|G(u)\|_{L_{2}(V)} \leq c_{G} +L_{G}\|u\|,\quad\text{for } u\in V.
\]


\section{Existence of solutions}

\label{existence}

Our main goal in this section is to study the existence of solutions to
(\ref{P}). First of all, we introduce the exact definition of a solution to
that problem.

\begin{definition}
Under the assumptions on $A$, $F$ and $G$ described in Section \ref{preli},
given $T>0$ we say that $u$ is a mild pathwise solution to (\ref{P})
corresponding to the initial condition $u_{0}\in V$ if $u\in C^{\beta}_{\beta
}([0,T],V)$ and satisfies for $t\in[0,T]$ the equation
\begin{equation}
\label{Pm}u(t)=S(t)u_{0}+\int_{0}^{t}S(t-r)F(u(r))dr+\int_{0}^{t}%
S(t-r)G(u(r))d\omega.
\end{equation}

\end{definition}

The first integral on the right hand side is a standard Lebesgue integral,
while the integral with respect to $\omega$ is interpreted in the sense of the
previous section.

\medskip

Existence and uniqueness of solutions for this kind of equations has been
investigated in \cite{CGGSch14} and in \cite{MasNua03}. In both references the
nonlinear mapping $G$ is assumed to be twice Fr\'echet differentiable with
bounded first and second derivatives, from which the uniqueness of solutions
can be derived. However, in this article we get rid of such strong regularity
and only assume $G$ to be Lipschitz continuous, which is not sufficient to
ensure the uniqueness of solutions.

We would like to mention that Theorem 3.1 in \cite{MasNua03} also deals with
existence-- and not uniqueness-- of a problem like the above one. To be more
precise, under Lipschitz regularity of $G$, assuming that the initial
condition is more regular (belonging to a particular space $V_{\kappa}$
instead of $V$), the authors prove existence of solutions in $W^{\alpha,
\infty}([0,T],V)$, the space of measurable functions $x:[0,T] \mapsto V$ such
that
\[
\|x\|_{\alpha,\infty}:=\sup_{t\in[0,T]} \bigg(\|x(t)\|+\int_{0}^{t}
\frac{\|x(t)-x(s)\|}{(t-s)^{1+\alpha}} ds \bigg)<\infty.
\]
Recall that $\alpha<\beta$, which in turn implies that $C^{\beta}_{\beta
}([0,T],V) \subset W^{\alpha, \infty}([0,T],V).$

In our setting, we shall apply Schauder's theorem to establish the existence
of solutions to (\ref{Pm}). To this end, for $\omega\in C^{\beta^{\prime}%
}([0,T],V)$ and $u_{0}\in V$ consider the operators
\[
\mathcal{T}(\cdot,\omega,u_{0}):C_{\beta}^{\beta}([0,T],V)\mapsto C_{\beta
}^{\beta}([0,T],V),
\]%
\[
\mathcal{T}^{I}(\cdot,\omega):C_{\beta}^{\beta}([0,T],V)\mapsto C_{\beta
}^{\beta}([0,T],V)
\]
defined by
\[
\mathcal{T}(u,\omega,u_{0})(t)=S(t)u_{0}+\int_{0}^{t} S(t-r)F(u(r))d
r+\int_{0}^{t}S(t-r)G(u(r))d\omega,
\]%
\[
\mathcal{T}^{I}(u,\omega)(t)=\int_{0}^{t} S(t-r)F(u(r))dr+\int_{0}%
^{t}S(t-r)G(u(r))d\omega.
\]

The application of Schauder's theorem will be based on suitable estimates
given in the following lemmas. In the different proofs, $c$ will denote a
generic constant that may differ from line to line. Sometimes we will write
$c_{T}$ or $c_{S}$ when we want to stress the dependence on $T$ or on the semigroup.

We start by stating the following technical result:

\begin{lemma}
\label{l16} Let $a>-1,\,b>-1$ and $a+b\ge-1,\,d> 0$ and $t \in[0,T]$. If for
$\rho>0$ we define%
\[
H(\rho):=\sup_{t \in[0,T]}t^{d}\int_{0}^{1}e^{-\rho t(1-v)}v^{a}(1-v)^{b}dv,
\]
then we have that $\lim_{\rho\to\infty}H(\rho)=0$.
\end{lemma}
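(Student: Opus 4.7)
The plan is to extract a small negative power of $\rho t$ from the exponential factor, so that a standard Beta-type bound controls the remaining integral while the prefactor $t^{d}$ absorbs the leftover power of $t$.

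First I would invoke the elementary inequality $e^{-x}\leq C_{s}\,x^{-s}$, valid for all $x>0$ with $C_{s}:=\sup_{x>0}x^{s}e^{-x}=s^{s}e^{-s}$ and any $s>0$. Applying this pointwise with $x=\rho t(1-v)$ gives
\[
e^{-\rho t(1-v)}\leq C_{s}\,(\rho t)^{-s}(1-v)^{-s},
\]
so that
\[
t^{d}\int_{0}^{1}e^{-\rho t(1-v)}v^{a}(1-v)^{b}\,dv\;\leq\;C_{s}\,\rho^{-s}\,t^{d-s}\int_{0}^{1}v^{a}(1-v)^{b-s}\,dv.
\]

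The next step is to choose $s$ in the nonempty interval $(0,\min(d,b+1))$, which is possible because $d>0$ and $b>-1$. For this choice the Beta-type integral
\[
\int_{0}^{1}v^{a}(1-v)^{b-s}\,dv=B(a+1,b-s+1)
\]
is finite (using also $a>-1$), while $d-s>0$ yields $t^{d-s}\leq T^{d-s}$ on $[0,T]$. Taking the supremum over $t\in[0,T]$ one obtains
\[
H(\rho)\leq C_{s}\,T^{d-s}\,B(a+1,b-s+1)\,\rho^{-s},
\]
which tends to $0$ as $\rho\to\infty$, proving the claim.

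The main subtlety to flag is that a naive approach fails: for small $t$ the product $\rho t$ remains bounded, so the exponential provides no effective decay, while the bare integral $\int_{0}^{1}v^{a}(1-v)^{b}\,dv$ is a constant independent of $\rho$. A crude pointwise/dominated-convergence argument therefore does not produce uniformity in $t$. The trick above is precisely to convert a sliver $s$ of the exponential decay into an algebraic blow-up factor $(\rho t)^{-s}$, which is then absorbed by the vanishing prefactor $t^{d}$; this is where the hypothesis $d>0$ enters essentially. The condition $a+b\geq-1$ does not appear to be used in this argument; only $a,b>-1$ and $d>0$ are needed.
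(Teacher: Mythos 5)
Your argument is correct. Note, however, that the paper does not actually reproduce a proof of Lemma \ref{l16}: it defers to \cite{CGGSch14}, remarking only that $H(\rho)$ is related to the Kummer (confluent hypergeometric) function, so the route taken there is to identify the integral $\int_{0}^{1}e^{-\rho t(1-v)}v^{a}(1-v)^{b}dv$ with a Beta-function multiple of Kummer's function and to invoke its known large-argument asymptotics. Your proof is a genuinely different, self-contained and more elementary alternative: the interpolation $e^{-x}\leq s^{s}e^{-s}x^{-s}$ converts a sliver $s\in(0,\min(d,b+1))$ of exponential decay into the factor $(\rho t)^{-s}$, the $t^{-s}$ is absorbed by the prefactor $t^{d}$ (this is exactly where the uniformity in $t\in[0,T]$ is obtained, correctly addressing the failure of a naive pointwise argument near $t=0$), and what remains is the convergent Beta integral $B(a+1,b-s+1)$. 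Beyond avoiding special-function machinery, your bound gives the explicit quantitative rate $H(\rho)=O(\rho^{-s})$ for every $s<\min(d,b+1)$, and your observation that the hypothesis $a+b\geq-1$ is not needed (only $a>-1$, $b>-1$, $d>0$ enter) is accurate; that condition is tight in the paper's application with $a=-\alpha$, $b=\alpha-1$, but plays no role in the convergence statement itself.
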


The proof can be found in \cite{CGGSch14}, where it can be checked that
$H(\rho)$ is related to the Kummer or hypergeometric function.\newline

For the sake of presentation we denote $\left\vert \!\left\vert \!\left\vert
\omega\right\vert \!\right\vert \!\right\vert _{\beta^{\prime},0,T}$ by
$\left\vert \!\left\vert \!\left\vert \omega\right\vert \!\right\vert
\!\right\vert _{\beta^{\prime}}$ (and $\|\mathcal{T} (u,\omega,u_{0}%
)\|_{\beta,\beta;\rho, 0,T}$ by $\|\mathcal{T} (u,\omega,u_{0})\|_{\beta
,\beta;\rho}$), when the time interval does not produce any confusion.

\begin{lemma}
\label{l6} For any $T>0$ there exists a $c_{T}>0$ (that also depends on the
constants related to $F$, $G$ and the semigroup) such that for $\omega\in
C^{\beta^{\prime}}([0,T],V)$, $u_{0}\in V$ and $u\in C^{\beta}_{\beta
}([0,T],V)$
\begin{equation}
\label{eq20}\|\mathcal{T} (u,\omega,u_{0})\|_{\beta,\beta;\rho}\le
c_{S}\|u_{0}\|+c_{T} \left|  \! \left|  \! \left|  \omega\right|  \! \right|
\! \right|  _{\beta^{\prime}}K(\rho)(1+\|u\|_{\beta,\beta;\rho})
\end{equation}
where $K(\rho)$ is such that $\lim_{\rho\to\infty}K(\rho)=0$ and $c_{S}\geq1$
is a constant depending on the semigroup $S$.
\end{lemma}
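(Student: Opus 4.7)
The plan is to estimate each of the three summands of $\mathcal{T}(u,\omega,u_0)(t)$ separately, in both components of the norm $\|\cdot\|_{\beta,\beta;\rho}$: the exponentially weighted supremum and the exponentially weighted H\"older seminorm. For the homogeneous term $S(t)u_0$, I would use $\|S(t)\|_{L(V)}\le c_S$ (which follows from (\ref{eq1}) together with strong continuity at $0$) to control the supremum part, and apply (\ref{eq30}) with $\delta=\gamma=0$ and $\alpha=\beta$ to obtain $\|S(t)u_0-S(s)u_0\|\le c_S(t-s)^\beta s^{-\beta}\|u_0\|$, so that after multiplying by $s^\beta$ the weighted H\"older seminorm is also bounded by $c_S\|u_0\|$. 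This produces exactly the first term on the right hand side of (\ref{eq20}).

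For the Lebesgue integral involving $F$, I would use the linear growth of $F$ together with $\|S(t-r)\|_{L(V)}\le c_S$ to bound
\[
\bigg\|\int_0^t S(t-r)F(u(r))dr\bigg\|\le c_S\int_0^t\bigl(c_F+L_F\|u(r)\|\bigr)dr,
\]
and then factor out the exponential weight: $e^{-\rho t}\int_0^t\|u(r)\|dr\le\|u\|_{\beta,\beta;\rho}\int_0^t e^{-\rho(t-r)}dr\le\|u\|_{\beta,\beta;\rho}/\rho$. The weighted H\"older seminorm of the $F$-integral is handled analogously, splitting the integrand over $[0,s]$ and $[s,t]$, estimating the first piece via (\ref{eq30}) and the second via (\ref{eq1}), and once more extracting a decaying factor in $\rho$ through the exponential weight. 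The whole $F$-contribution thus takes the form $K_1(\rho)(1+\|u\|_{\beta,\beta;\rho})$ with $K_1(\rho)=O(1/\rho)$.

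The main step, and the principal technical obstacle, is the pathwise integral $\int_0^t S(t-r)G(u(r))d\omega$. On a generic subinterval $[q,r]\subset[0,t]$ I would apply Lemma \ref{l3}(i) with integrand $g(\cdot):=S(t-\cdot)G(u(\cdot))$, giving
\[
\bigg\|\int_q^r S(t-\tau)G(u(\tau))d\omega\bigg\|\le c\,\|S(t-\cdot)G(u(\cdot))\|_{\beta,\beta,q,r}\,\ltn\omega\rtn_{\beta'}(r-q)^{\beta'}.
\]
Controlling the $(\beta,\beta)$-norm of $\tau\mapsto S(t-\tau)G(u(\tau))$ on $[q,r]$ is the delicate point, because both factors vary with $\tau$. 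Writing
\[
S(t-\tau_2)G(u(\tau_2))-S(t-\tau_1)G(u(\tau_1))=[S(t-\tau_2)-S(t-\tau_1)]G(u(\tau_2))+S(t-\tau_1)[G(u(\tau_2))-G(u(\tau_1))],
\]
the first piece is estimated via (\ref{eq30}) combined with the linear growth of $G$, and the second via the Lipschitz continuity of $G$ together with the weighted H\"older norm of $u$. Singular factors such as $(t-\tau_1)^{-\beta}$ and $\tau_1^{-\beta}$ appear, and after a change of variables $\tau=t(1-v)$ combined with the exponential weight $e^{-\rho t}$, the resulting integrals in $v$ are exactly of the type treated in Lemma \ref{l16}.

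Gathering the pieces, the $G$-contribution to both the weighted supremum and the weighted H\"older seminorm yields a bound of the form $c_T\ltn\omega\rtn_{\beta'}K_2(\rho)(1+\|u\|_{\beta,\beta;\rho})$, where $K_2(\rho)$ is a finite sum of expressions of the shape covered by Lemma \ref{l16}, hence $K_2(\rho)\to0$ as $\rho\to\infty$. Absorbing the $F$-contribution into $K(\rho):=K_1(\rho)+K_2(\rho)$ (at the cost of enlarging $c_T$) completes (\ref{eq20}). The chief difficulty I anticipate is keeping the H\"older-in-$\tau$ estimate of $S(t-\tau)G(u(\tau))$ on $[q,r]\subset(0,t]$ uniform enough in $t$ to produce a single constant $c_T$, while carefully tracking the powers of $(r-q)$, $(t-\tau)^{-1}$ and $\tau^{-1}$ so that the final integrals meet the hypotheses $a,b>-1$ and $a+b\ge-1$ required by Lemma \ref{l16}.
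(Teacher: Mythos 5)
Your treatment of the $S(t)u_{0}$ term and of the Lebesgue integral with $F$ matches the paper's proof (same splitting of the weighted H\"older seminorm over $[0,s]$ and $[s,t]$, same extraction of $1/\rho$ and $1/\rho^{1-\beta}$ factors, same appeal to Lemma \ref{l16} for the term with the kernel difference). The gap is in the key step, the integral against $\omega$. If you apply the \emph{final} product bound of Lemma \ref{l3}(i), namely $\Vert\int_{q}^{r}g\,d\omega\Vert\leq c\Vert g\Vert_{\beta,\beta,q,r}\ltn\omega\rtn_{\beta^{\prime}}(r-q)^{\beta^{\prime}}$ with $g(\cdot)=S(t-\cdot)G(u(\cdot))$, you have already integrated out the variable $r$: there is no integral left in which to insert the weight $e^{-\rho(t-r)}$ and perform the change of variables leading to Lemma \ref{l16}. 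The best you can then get is a bound of the form $c_{T}\ltn\omega\rtn_{\beta^{\prime}}(1+\Vert u\Vert_{\beta,\beta;\rho})$ \emph{without} any factor tending to zero as $\rho\to\infty$ (the factor $(t-s)^{\beta^{\prime}}\leq T^{\beta^{\prime}}$ does not decay in $\rho$), and the smallness $c_{T}\ltn\omega\rtn_{\beta^{\prime}}K(\rho)<1/2$ on which Theorem \ref{t1} and Lemma \ref{BoundSolutions2} rest is lost. What the paper does instead is to stop at the \emph{first} inequality of Lemma \ref{l3}(i), $\Vert\int_{s}^{t}g\,d\omega\Vert\leq\int_{s}^{t}\Vert D_{s+}^{\alpha}g[r]\Vert_{L_{2}(V)}\Vert D_{t-}^{1-\alpha}\omega_{t-}[r]\Vert\,dr$, expand $D_{s+}^{\alpha}g[r]$ using exactly the two-term decomposition of $S(t-\tau_{2})G(u(\tau_{2}))-S(t-\tau_{1})G(u(\tau_{1}))$ that you write down, bound $\Vert D_{t-}^{1-\alpha}\omega_{t-}[r]\Vert\leq c\ltn\omega\rtn_{\beta^{\prime}}(t-r)^{\alpha+\beta^{\prime}-1}$, and only then insert $e^{-\rho t}=e^{-\rho(t-r)}e^{-\rho r}$ pointwise under the $dr$-integral so that $e^{-\rho r}\Vert u(r)\Vert$ is absorbed into $\Vert u\Vert_{\beta,\beta;\rho}$ while $e^{-\rho(t-r)}$ combines with the singular kernels to trigger Lemma \ref{l16}. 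So you have the right ingredients but in the wrong order; the black-box estimate must be replaced by the fractional-derivative representation.

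A secondary omission: for the weighted H\"older seminorm of the stochastic term you also need to control $\int_{0}^{s}(S(t-r)-S(s-r))G(u(r))\,d\omega$, where the integrand involves a \emph{difference of kernels} rather than a single $S(t-\cdot)$; your generic-subinterval framing with $g(\cdot)=S(t-\cdot)G(u(\cdot))$ does not cover it. The paper handles it with the second line of (\ref{eq30}) and an auxiliary exponent $\alpha^{\prime}\in(\alpha,1)$ chosen so that $\alpha^{\prime}+\beta<\alpha+\beta^{\prime}$, which is needed to keep the exponents within the range $a,b>-1$, $a+b\geq-1$ of Lemma \ref{l16}.
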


\begin{proof}

Despite the fact that a quite similar result was proved in \cite{CGGSch14}
(but in that paper there was not any drift), for the sake of completeness we
give the proof here. First of all, according to (\ref{eq30}) we have that
\begin{align*}
\|S(\cdot)u_{0}\|_{\beta,\beta;\rho}  &  = \sup_{t\in[0,T]}e^{-\rho
t}\|S(t)u_{0}\|+\sup_{0< s<t\le T}s^{\beta}e^{-\rho t} \frac{\|S(t)u_{0}%
-S(s)u_{0}\|}{(t-s)^{\beta}}\\
&  \le c_{S} \|u_{0}\|+c_{S} \sup_{0< s<t\le T}s^{\beta}e^{-\rho t} \frac{
s^{-\beta} e^{-\lambda s} (t-s)^{\beta}}{(t-s)^{\beta}}\|u_{0}\|\\
&  \le c_{S} \|u_{0}\|.
\end{align*}
Note that the last constant $c_{S}$ above is bigger than one, which follows
from the fact that in particular
\[
\|u_{0}\|=\|S(0)u_{0}\|\leq\sup_{t\in[0,T]}e^{-\rho t}\|S(t)u_{0}\|\le c_{S}
\|u_{0}\|.
\]

On the other hand,
\begin{align}
\label{equ31b}%
\begin{split}
\bigg\| \int_{0}^{\cdot}S(\cdot-r) F(u(r)) dr \bigg\|_{\beta,\beta;\rho}  &
\leq\sup_{t\in[0,T]}e^{-\rho t}\bigg\|\int_{0}^{t}S(t-r)F(u(r))dr \bigg\|\\
&  + \sup_{0< s<t\leq T} \frac{s^{\beta}e^{-\rho t}}{{(t-s)^{\beta}}}
\bigg\|\int_{s}^{t}S(t-r)F(u(r))dr\bigg\|\\
&  + \sup_{0< s<t\leq T} \frac{s^{\beta}e^{-\rho t}}{{(t-s)^{\beta}}%
}\bigg\|\int_{0}^{s}(S(t-r)-S(s-r))F(u(r))dr\bigg\|.
\end{split}
\end{align}
Due to the at most linear growth of $F$,
\begin{align*}
\sup_{t\in[0,T]}e^{-\rho t}\bigg\|\int_{0}^{t}S(t-r)F(u(r))dr\bigg\|  &  \leq
c_{S,F} \sup_{t\in[0,T]}e^{-\rho t}\int_{0}^{t}(1+\|u(r)\|)dr\\
&  =c_{S,F}\sup_{t\in\lbrack0,T]}\left(  e^{-\rho t}t+\left\Vert u\right\Vert
_{\beta,\beta,\rho}\frac{1-e^{-\rho t}}{\rho}\right) \\
&  \leq c_{S,F}k_{1}(\rho)\left(  1+\left\Vert u\right\Vert _{\beta,\beta
,\rho}\right)  ,
\end{align*}
where $k_{1}(\rho)=\frac{1}{\rho}$. Above we have used that $\max_{t\geq
0}e^{-\rho t}t=\frac{e^{-1}}{\rho}\leq\frac{1}{\rho}.$ Moreover,
\begin{align*}
\frac{s^{\beta}e^{-\rho t}}{{(t-s)^{\beta}}} \bigg\|\int_{s}^{t}%
S(t-r)F(u(r))dr\bigg\|  &  \leq c_{S,F,T} \bigg(e^{-\rho t}(t-s)^{1-\beta
}+\|u\|_{\beta,\beta;\rho} \frac{\int_{s}^{t} e^{-\rho(t-r)}dr }{(t-s)^{\beta
}}\bigg)\\
&  \leq c_{S,F,T} \bigg(e^{-\rho t}(t-s)^{1-\beta}+\|u\|_{\beta,\beta;\rho}
\frac{1}{\rho^{1-\beta}} \sup_{x>0} \frac{1-e^{-x}}{x^{\beta}}\bigg)\\
&  \leq c_{S,F,T} k_{2}(\rho)(1+\|u\|_{\beta,\beta;\rho}), \; \text{for all }
0<s<t\leq T,
\end{align*}
where $k_{2}(\rho)=\frac{1}{\rho^{1-\beta}}$. Note that now we have used that
\[
\max_{t\geq0} e^{-\rho t}(t-s)^{1-\beta}\leq e^{\beta-1} \frac{(1-\beta
)^{1-\beta}}{\rho^{1-\beta}}\leq\frac{1}{\rho^{1-\beta}}.
\]
For the last term we have
\begin{align*}
\frac{s^{\beta}e^{-\rho t}}{{(t-s)^{\beta}}}\bigg\|\int_{0}^{s}%
(S(t-r)-S(s-r))F(u(r))dr\bigg\|  &  \leq c_{S} \frac{s^{\beta}e^{-\rho t}%
}{{(t-s)^{\beta}}}\int_{0}^{s}\frac{(t-s)^{\beta}}{(s-r)^{\beta}} (c_{F}%
+L_{F}\|u(r)\|)dr\\
&  \leq c_{S,F} (1+\|u\|_{\beta,\beta;\rho}) s^{\beta} \int_{0}^{s}
e^{-\rho(s-r)} (s-r)^{-\beta} dr\\
&  \leq c_{S,F,T} k_{3}(\rho)(1+\|u\|_{\beta,\beta;\rho}), \; \text{for all }
0<s<t\leq T,
\end{align*}
with $k_{3}(\rho)$ defined as in Lemma \ref{l16}. For the stochastic integral
we have a similar splitting than before, namely
\begin{align}
\label{equ31}%
\begin{split}
\bigg\| \int_{0}^{\cdot}S(\cdot-r) G(u(r)) d\omega\bigg\|_{\beta,\beta;\rho}
&  \leq\sup_{t\in[0,T]}e^{-\rho t}\bigg\|\int_{0}^{t}S(t-r)G(u(r))d\omega
\bigg\|\\
&  + \sup_{0< s<t\leq T} \frac{s^{\beta}e^{-\rho t}}{{(t-s)^{\beta}}}
\bigg\|\int_{s}^{t}S(t-r)G(u(r))d\omega\bigg\|\\
&  + \sup_{0< s<t\leq T} \frac{s^{\beta}e^{-\rho t}}{{(t-s)^{\beta}}%
}\bigg\|\int_{0}^{s}(S(t-r)-S(s-r))G(u(r))d\omega\bigg\|.
\end{split}
\end{align}
From the definition of the fractional derivative of order $1-\alpha$ it
follows easily that
\[
\|D_{t-}^{1-\alpha}\omega[r]\|\le c \left|  \! \left|  \! \left|
\omega\right|  \! \right|  \! \right|  _{\beta^{\prime}}(t-r)^{\alpha
+\beta^{\prime}-1},
\]
hence, thanks also to (\ref{eq30}) we obtain
\begin{align}
\label{l11}%
\begin{split}
&  s^{\beta}e^{-\rho t}\bigg\|\int_{s}^{t}S(t-r)G(u(r))d\omega\bigg\|\\
&  \le c s^{\beta}e^{-\rho t} \int_{s}^{t}\bigg(\frac{\|S(t-r)\|_{L(V)}%
\|G(u(r))\|_{L_{2}(V)}}{(r-s)^{\alpha}}\\
&  +\int_{s}^{r}\frac{\|S(t-r)-S(t-q)\|_{L(V)}\|G(u(r))\|_{L_{2}(V)}%
}{(r-q)^{1+\alpha}}dq\\
&  +\int_{s}^{r}\frac{\|S(t-q)\|_{L(V)}\|G(u(r))-G(u(q))\|_{L_{2}(V)}%
}{(r-q)^{1+\alpha}}dq \bigg)\left|  \! \left|  \! \left|  \omega\right|  \!
\right|  \! \right|  _{\beta^{\prime}}(t-r)^{\alpha+\beta^{\prime}-1}dr\\
&  \leq c_{S,T}\left|  \! \left|  \! \left|  \omega\right|  \! \right|  \!
\right|  _{\beta^{\prime}}\bigg( \int_{s}^{t} e^{-\rho(t-r)}\frac{e^{-\rho r}
(c_{G}+L_{G}\|u(r)\|)}{(r-s)^{\alpha}}(t-r)^{\alpha+\beta^{\prime}-1}dr\\
&  +\int_{s}^{t}\int_{s}^{r}e^{-\rho(t-r)}\frac{ e^{-\rho r}(c_{G}%
+L_{G}\|u(r)\|)(r-q)^{\beta}}{(t-r)^{\beta}(r-q)^{1+\alpha}}dq(t-r)^{\alpha
+\beta^{\prime}-1}dr\\
&  +\int_{s}^{t}\int_{s}^{r}e^{-\rho(t-r)}\frac{ e^{-\rho r}L_{G}%
\|u(r)-u(q)\|q^{\beta}(r-q)^{\beta}}{(r-q)^{1+\alpha}q^{\beta}(r-q)^{\beta}%
}dq(t-r)^{\alpha+\beta^{\prime}-1}dr\bigg)\\
&  \le c_{S,G,T}\left|  \! \left|  \! \left|  \omega\right|  \! \right|  \!
\right|  _{\beta^{\prime}}(t-s)^{\beta^{\prime}} (1+\|u\|_{\beta,\beta;\rho
})\int_{s}^{t}e^{-\rho(t-r)}(r-s)^{-\alpha}(t-r)^{\alpha-1}dr\\
&  +c_{S,G,T}\left|  \! \left|  \! \left|  \omega\right|  \! \right|  \!
\right|  _{\beta^{\prime}}(1+\|u\|_{\beta,\beta;\rho})\int_{s}^{t}%
e^{-\rho(t-r)}(r-s)^{\beta-\alpha}(t-r)^{\alpha+\beta^{\prime}-1-\beta}dr\\
&  + c_{S,G,T}\left|  \! \left|  \! \left|  \omega\right|  \! \right|  \!
\right|  _{\beta^{\prime}}(t-s)^{\beta^{\prime}} \|u\|_{\beta,\beta;\rho}%
\int_{s}^{t}e^{-\rho(t-r)}(r-s)^{-\alpha}(t-r)^{\alpha-1}dr.
\end{split}
\end{align}

Performing a change of variable, it is easy to see that
\begin{align*}
&  (t-s)^{\beta^{\prime}}\int_{s}^{t}e^{-\rho(t-r)}(r-s)^{-\alpha
}(t-r)^{\alpha-1}dr\\
=  &  (t-s)^{\beta^{\prime}-\beta}(t-s)^{\beta}\int_{0}^{1} e^{-\rho
(t-s)(1-v)} v^{-\alpha}(1-v)^{\alpha-1}dv=(t-s)^{\beta}k_{4}(\rho)
\end{align*}
with $\lim_{\rho\to\infty} k_{4}(\rho)=0$, taking in Lemma \ref{l16}
$a=-\alpha,\,b=\alpha-1$, $d=\beta^{\prime}-\beta$ and $t-s$ as the
corresponding $t$ there. The second integral on the right hand side may be
rewritten in the same way, since%
\begin{align*}
\int_{s}^{t}e^{-\rho(t-r)}(r-s)^{\beta-\alpha}(t-r)^{\alpha+\beta^{\prime
}-1-\beta}dr \leq(t-s)^{\beta^{\prime}}\int_{s}^{t}e^{-\rho(t-r)}%
(r-s)^{-\alpha}(t-r)^{\alpha-1}dr.
\end{align*}
Therefore, from (\ref{l11}) we obtain
\begin{align*}
s^{\beta}  &  e^{-\rho t}\bigg\|\int_{s}^{t}S(t-r)G(u(r))d\omega\bigg\| \le
c_{S,G,T} \left|  \! \left|  \! \left|  \omega\right|  \! \right|  \! \right|
_{\beta^{\prime}} (t-s)^{\beta}k_{4}(\rho) (1+\|u\|_{\beta,\beta;\rho}).
\end{align*}

In a similar manner than before, for the first expression on the right hand
side of \eqref{equ31} we obtain
\[
e^{-\rho t}\bigg\|\int_{0}^{t}S(t-r)G(u(r))d\omega\bigg\|\ \leq c_{S,G,T}
\left|  \! \left|  \! \left|  \omega\right|  \! \right|  \! \right|
_{\beta^{\prime}}k_{5}(\rho) (1+\|u\|_{\beta,\beta;\rho}),
\]
with $\lim_{\rho\to\infty} k_{5}(\rho)=0$. Finally, for the third term on the
right hand side of (\ref{equ31}) we follow similar steps than above when
deriving the inequality (\ref{l11}). Indeed, for $0<\alpha<\alpha^{\prime}<1$
such that $\alpha^{\prime}+\beta<\alpha+\beta^{\prime}$, applying (\ref{eq30})
we have
\[%
\begin{split}
s^{\beta}e^{-\rho t}  &  \bigg\|\int_{0}^{s}(S(t-r)-S(s-r))G(u(r))d\omega
\bigg\|\\
&  \leq c(t-s)^{\beta}\left\vert \!\left\vert \!\left\vert \omega\right\vert
\!\right\vert \!\right\vert _{\beta^{\prime}}T^{\beta}\bigg(\int_{0}
^{s}e^{-\rho(t-r)} \frac{e^{-\rho r}(c_{G}+L_{G}\Vert u(r)\Vert)}{r^{\alpha
}(s-r)^{\beta} }(s-r)^{\alpha+\beta^{\prime}-1}dr\\
&  +\int_{0}^{s}\int_{0}^{r}e^{-\rho(t-r)}\frac{e^{-\rho r}(c_{G}+L_{G}\Vert
u(r)\Vert)(r-q)^{\alpha^{\prime}}}{(s-r)^{\alpha^{\prime}+\beta}
(r-q)^{1+\alpha}}dq(s-r)^{\alpha+\beta^{\prime}-1}dr\\
&  +\int_{0}^{s}\int_{0}^{r}e^{-\rho(t-r)}\frac{e^{-\rho r}L_{G}\Vert
u(r)-u(q)\Vert q^{\beta}(r-q)^{\beta}}{(s-r)^{\beta}(r-q)^{1+\alpha}q^{\beta
}(r-q)^{\beta} }dq(s-r)^{\alpha+\beta^{\prime}-1}dr\bigg)\\
&  \leq c_{S,G,T}(t-s)^{\beta}\left\vert \!\left\vert \!\left\vert
\omega\right\vert \!\right\vert \!\right\vert _{\beta^{\prime}}(1+\Vert
u\Vert_{\beta,\beta;\rho})\int_{0}^{s}e^{-\rho(t-r)} r^{-\alpha}%
(s-r)^{\alpha+\beta^{\prime}-1-\beta}dr\\
&  +c_{S,G,T}(t-s)^{\beta}\left\vert \!\left\vert \!\left\vert \omega
\right\vert \!\right\vert \!\right\vert _{\beta^{\prime}}(1+\Vert
u\Vert_{\beta,\beta;\rho})\int_{0}^{s}e^{-\rho(t-r)}r^{\alpha^{\prime}-\alpha
}(s-r)^{\alpha+\beta^{\prime}-1-\alpha^{\prime}-\beta}dr\\
&  +c_{S,G,T}(t-s)^{\beta}\left\vert \!\left\vert \!\left\vert \omega
\right\vert \!\right\vert \!\right\vert _{\beta^{\prime}}\Vert u\Vert
_{\beta,\beta;\rho}\int_{0}^{s}e^{-\rho(t-r)}r^{-\alpha} (s-r)^{\alpha
-\beta+\beta^{\prime}-1}dr.
\end{split}
\]
Collecting all the above estimates the proof is complete.
\end{proof}

\begin{lemma}
\label{l0} For $\omega\in C^{\beta^{\prime}}([0,T],V)$ and $u\in C_{\beta
}^{\beta}([0,T],V)$ the mapping $\mathcal{T}^{I}(u,\omega)(t)\in V_{\delta}$
for every $t\geq0$ and $\delta\in\lbrack0,\beta^{\prime}).$ Moreover, there
exists a constant $c$ depending on $S$, $F$ and $G$ such that
\[
\Vert\mathcal{T}^{I}(u,\omega)(t)\Vert_{V_{\delta}}\leq c(t^{\beta^{\prime
}-\delta}\left\vert \!\left\vert \!\left\vert \omega\right\vert \!\right\vert
\!\right\vert _{\beta^{\prime}}+t^{1-\delta})(1+\Vert u\Vert_{\beta,\beta}).
\]
\end{lemma}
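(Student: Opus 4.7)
The plan is to estimate the drift and stochastic pieces of $\mathcal{T}^{I}(u,\omega)(t)$ separately in the $V_{\delta}$-norm, absorbing the extra regularity into the smoothing factor $(-A)^{\delta}S(\cdot)$. For the drift piece, (\ref{eq1}) with $\gamma=\delta$ gives $\Vert S(t-r)F(u(r))\Vert_{V_{\delta}}\leq c_{S}(t-r)^{-\delta}(c_{F}+L_{F}\Vert u\Vert_{\infty})$, and integrating from $0$ to $t$ together with $\Vert u\Vert_{\infty}\leq \Vert u\Vert_{\beta,\beta}$ yields a contribution bounded by $c\,t^{1-\delta}(1+\Vert u\Vert_{\beta,\beta})$.

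For the stochastic piece the idea is to bring $(-A)^{\delta}$ inside the integral. Setting $h(r):=(-A)^{\delta}S(t-r)G(u(r))$, which is in $L_{2}(V)$ for $r<t$, a standard closure argument (the $\omega$-integral is a limit of Riemann sums and $(-A)^{\delta}$ is closed) gives $(-A)^{\delta}\int_{0}^{t}S(t-r)G(u(r))d\omega=\int_{0}^{t}h(r)d\omega(r)$. Lemma \ref{l3}(i) then yields
\[
\Vert\mathcal{T}^{I}(u,\omega)(t)-\textstyle\int_{0}^{t}S(t-r)F(u(r))dr\Vert_{V_{\delta}}\leq\int_{0}^{t}\Vert D_{0+}^{\alpha}h[r]\Vert_{L_{2}(V)}\Vert D_{t-}^{1-\alpha}\omega_{t-}[r]\Vert\,dr.
\]
We use the elementary bound $\Vert D_{t-}^{1-\alpha}\omega_{t-}[r]\Vert\leq c\left\vert \!\left\vert \!\left\vert \omega\right\vert \!\right\vert \!\right\vert _{\beta^{\prime}}(t-r)^{\alpha+\beta^{\prime}-1}$, and for $D_{0+}^{\alpha}h[r]$ split the increment $h(r)-h(q)$ as
\[
h(r)-h(q)=\bigl[(-A)^{\delta}(S(t-r)-S(t-q))\bigr]G(u(r))+(-A)^{\delta}S(t-q)\bigl[G(u(r))-G(u(q))\bigr].
\]
Applying (\ref{eq1}) and (\ref{eq30}) (with an auxiliary exponent $\eta$ chosen so that $\alpha<\eta<\alpha+\beta^{\prime}-\delta$, which is possible precisely because $\delta<\beta^{\prime}$), together with linear growth and Lipschitz continuity of $G$ and the inequality $\Vert u(r)-u(q)\Vert\leq q^{-\beta}(r-q)^{\beta}\Vert u\Vert_{\beta,\beta}$, one arrives at
\[
\Vert D_{0+}^{\alpha}h[r]\Vert_{L_{2}(V)}\leq c(1+\Vert u\Vert_{\beta,\beta})\bigl(r^{-\alpha}(t-r)^{-\delta}+r^{\eta-\alpha}(t-r)^{-\eta-\delta}\bigr).
\]

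Multiplying by $(t-r)^{\alpha+\beta^{\prime}-1}$ and integrating, the two resulting integrals are of Beta type; the substitution $r=tv$ reduces each one to $t^{\beta^{\prime}-\delta}$ times a finite Beta integral. The convergence conditions $\alpha<1$, $\beta>\alpha$, $\eta-\alpha>-1$, and $\alpha+\beta^{\prime}-\eta-\delta>0$ all hold under the stated constraints on the parameters. Adding the drift estimate gives the claimed bound, and the finiteness of the right-hand side confirms $\mathcal{T}^{I}(u,\omega)(t)\in V_{\delta}$. The main obstacle is the careful choice of the intermediate exponent $\eta$: it must be large enough to make the $q$-integrals of the singular kernel converge near $q=r$, yet small enough that after pairing with $(t-r)^{\alpha+\beta^{\prime}-1}$ the $r$-integral still converges near $r=t$. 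The hypothesis $\delta<\beta^{\prime}$ is exactly what permits such an $\eta$ to be selected, and it is also what turns the final power of $t$ into the positive exponent $\beta^{\prime}-\delta$ appearing in the statement.
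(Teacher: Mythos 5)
Your proposal is correct and follows essentially the same route as the paper: the drift term is handled identically, and for the stochastic term the paper likewise bounds the $V_{\delta}$-norm through the fractional-derivative estimate, splitting the increment of $r\mapsto S(t-r)G(u(r))$ into a semigroup-difference part and a $G$-increment part, and introducing an intermediate exponent $\alpha^{\prime}$ with $\alpha<\alpha^{\prime}$ and $\alpha^{\prime}+\delta<\alpha+\beta^{\prime}$, which plays exactly the role of your $\eta$. The only cosmetic difference is that you make explicit the commutation of $(-A)^{\delta}$ with the integral (the paper simply estimates the $V_{\delta}$-norm directly, which amounts to the same computation), so no further changes are needed.
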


\begin{proof}
For the deterministic integral we directly obtain
\begin{align*}
\bigg\|\int_{0}^{t}S(t-r)F(u(r))dr\bigg\|_{V_{\delta}}  &  \leq c_{S,F}
\int_{0}^{t} (t-r)^{-\delta}(1+\|u(r)\|)dr \leq c_{S,F} t^{1-\delta
}(1+\|u\|_{\beta,\beta}).
\end{align*}
For the stochastic integral we have
\[%
\begin{split}
&  \bigg\|\int_{0}^{t}S(t-r)G(u(r))d\omega\bigg\|_{V_{\delta}}\\
&  \leq c\int_{0}^{t}\bigg(\frac{\Vert S(t-r)\Vert_{L(V,V_{\delta})}\Vert
G(u(r))\Vert_{L_{2}(V)}}{r^{\alpha}}\\
&  +\int_{0}^{r}\frac{\Vert S(t-r)-S(t-q)\Vert_{L(V,V_{\delta})}\Vert
G(u(r))\Vert_{L_{2}(V)}}{(r-q)^{1+\alpha}}dq\\
&  +\int_{0}^{r}\frac{\Vert S(t-q)\Vert_{L(V,V_{\delta})}\Vert
G(u(r))-G(u(q))\Vert_{L_{2}(V)}}{(r-q)^{1+\alpha}}dq\bigg)\left\vert
\!\left\vert \!\left\vert \omega\right\vert \!\right\vert \!\right\vert
_{\beta^{\prime}}(t-r)^{\alpha+\beta^{\prime}-1}dr\\
&  \leq c_{S}\left\vert \!\left\vert \!\left\vert \omega\right\vert
\!\right\vert \!\right\vert _{\beta^{\prime}}\bigg(\int_{0}^{t}\frac
{(c_{G}+L_{G}\Vert u(r)\Vert)}{r^{\alpha}}(t-r)^{\alpha+\beta^{\prime
}-1-\delta}dr\\
&  +\int_{0}^{t}\int_{0}^{r}\frac{(c_{G}+L_{G}\Vert u(r)\Vert)(r-q)^{\alpha
^{\prime}}}{(t-r)^{\alpha^{\prime}+\delta}(r-q)^{1+\alpha}}dq(t-r)^{\alpha
+\beta^{\prime}-1}dr\\
&  +\int_{0}^{t}\int_{0}^{r}\frac{L_{G}\Vert u(r)-u(q)\Vert q^{\beta
}(r-q)^{\beta}}{(r-q)^{1+\alpha}q^{\beta}(r-q)^{\beta}}dq(t-r)^{\alpha
+\beta^{\prime}-1-\delta}dr\bigg)\\
&  \leq c_{S,G}\left\vert \!\left\vert \!\left\vert \omega\right\vert
\!\right\vert \!\right\vert _{\beta^{\prime}}(1+\Vert u\Vert_{\beta,\beta
})t^{\beta^{\prime}-\delta}%
\end{split}
\]
where $\alpha^{\prime}$ has been chosen such that $0<\alpha<\alpha^{\prime}<1$
with $\alpha^{\prime}+\delta<\alpha+\beta^{\prime}$.
\end{proof}

\begin{corollary}
\label{l1} For $\omega\in C^{\beta^{\prime}}([0,T],V)$, $u_{0}\in V$ and $u\in
C_{\beta}^{\beta}([0,T],V)$ the mapping $\mathcal{T}(u,\omega,u_{0})(t)\in
V_{\delta}$ for every $t>0$ and $\delta\in\lbrack0,\beta^{\prime}).$
Moreover,
\[
\Vert\mathcal{T}(u,\omega,u_{0})(t)\Vert_{V_{\delta}}\leq c_{S}t^{-\delta
}\Vert u_{0}\Vert+c(t^{\beta^{\prime}-\delta}\left\vert \!\left\vert
\!\left\vert \omega\right\vert \!\right\vert \!\right\vert _{\beta^{\prime}%
}+t^{1-\delta})(1+\Vert u\Vert_{\beta,\beta}),
\]
where the constant $c$ depends on the semigroup and also the constants related
to $F$ and $G$.
\end{corollary}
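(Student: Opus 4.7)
The statement is a direct corollary of Lemma \ref{l0} combined with the smoothing property of the analytic semigroup expressed in (\ref{eq1}), so the plan is essentially to split $\mathcal{T}$ into its linear and inhomogeneous parts and estimate each separately.

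First I would write $\mathcal{T}(u,\omega,u_0)(t) = S(t)u_0 + \mathcal{T}^I(u,\omega)(t)$ and handle the two summands independently. For the $\mathcal{T}^I$ term there is nothing to do: Lemma \ref{l0} states exactly that $\mathcal{T}^I(u,\omega)(t) \in V_\delta$ for every $t > 0$ and every $\delta \in [0,\beta')$, together with the bound
\[
\|\mathcal{T}^I(u,\omega)(t)\|_{V_\delta} \leq c\bigl(t^{\beta'-\delta} \ltn \omega \rtn_{\beta'} + t^{1-\delta}\bigr)(1+\|u\|_{\beta,\beta}),
\]
where the constant depends on $S$, $F$ and $G$. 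So I only need to supply an estimate of the semigroup term in the $V_\delta$-norm.

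For the linear part I would use the smoothing estimate (\ref{eq1}) applied to $\gamma = \delta$, namely $\|S(t)\|_{L(V,V_\delta)} \leq c_S e^{-\lambda t} t^{-\delta} \leq c_S t^{-\delta}$ for $t > 0$. This immediately yields $S(t)u_0 \in V_\delta$ for every $t > 0$ and
\[
\|S(t)u_0\|_{V_\delta} \leq c_S t^{-\delta} \|u_0\|.
\]
Note that for $\delta = 0$ the estimate still holds trivially with $c_S \geq 1$ as chosen in the proof of Lemma \ref{l6}.

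Adding the two bounds gives the claimed estimate and also the $V_\delta$-regularity. Since the work is essentially bookkeeping, there is no real obstacle; the only point worth checking is that the constant in front of $\|u_0\|$ comes out as $c_S t^{-\delta}$ (hence allowed to blow up as $t \to 0^+$ when $\delta > 0$), while the contribution from $\mathcal{T}^I$ remains bounded as $t \to 0^+$, consistent with the fact that $\mathcal{T}^I(u,\omega)(0) = 0$ but $S(\cdot)u_0$ need not lie in $V_\delta$ at $t = 0$.
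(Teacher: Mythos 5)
Your proposal is correct and follows exactly the paper's own argument: decompose $\mathcal{T}(u,\omega,u_{0})(t)=S(t)u_{0}+\mathcal{T}^{I}(u,\omega)(t)$, apply the smoothing estimate (\ref{eq1}) to get $\Vert S(t)u_{0}\Vert_{V_{\delta}}\leq c_{S}t^{-\delta}\Vert u_{0}\Vert$ for $t>0$, and invoke Lemma \ref{l0} for the integral term. The paper's proof is the same two-line observation, so there is nothing to add.
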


\begin{proof}
For $t>0$ we trivially have $\|S(t)u_{0}\Vert_{V_{\delta}}\leq ct^{-\delta
}\Vert u_{0}\Vert.$ To conclude the result it suffices to take into account
Lemma \ref{l0}.
\end{proof}

We have also the following result:

\begin{theorem}
\label{t3} Denote by $B:=\bar{B}_{C_{\beta}^{\beta}}(0,R)$ and $\hat
B:=\bar{B}_{C^{\beta^{\prime}}}(0,R)$ the closed balls in $C_{\beta}^{\beta
}([0,T]; V)$ and $C^{\beta^{\prime}}([0,T],V)$, respectively, with radius $R$
and center $0$. Let $\mathcal{K} $ be a compact set in $V$. Then
$\mathcal{T}^{I}(B,\hat B)$ and $\mathcal{T}(B,\hat B,\mathcal{K} )$ are
relatively compact in $C^{\beta}([0,T],V)$ and $C_{\beta}^{\beta}([0,T],V)$, respectively.
\end{theorem}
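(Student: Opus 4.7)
The plan is to invoke an Arzel\`a--Ascoli type compactness theorem in H\"older spaces: if $F \subset C^\beta([0,T], V)$ is bounded in $C^{\beta_1}([0,T], V)$ for some $\beta_1 > \beta$ and is pointwise relatively compact in $V$, then $F$ is relatively compact in $C^\beta([0,T], V)$. Pointwise relative compactness for $\mathcal{T}^{I}(B, \hat B)$ follows essentially for free from Lemma \ref{l0}: for each $t \in (0, T]$ and any $\delta \in (0, \beta')$, the set $\{\mathcal{T}^{I}(u, \omega)(t) : u \in B,\ \omega \in \hat B\}$ is uniformly bounded in $V_\delta$, and the embedding $V_\delta \hookrightarrow V$ is compact since $-A$ has compact inverse; the value at $t = 0$ is simply $\{0\}$.

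The main work is to show that $\mathcal{T}^{I}(B, \hat B)$ is uniformly bounded in $C^{\beta_1}([0,T], V)$ for some $\beta_1 \in (\beta, \beta')$. I would estimate $\|\mathcal{T}^{I}(u,\omega)(t) - \mathcal{T}^{I}(u,\omega)(s)\|$ by the same splitting used in the proof of Lemma \ref{l6}, writing the increment as the diagonal integral $\int_s^t S(t - r)[F(u(r))\,dr + G(u(r))\,d\omega]$ plus the off-diagonal integral $\int_0^s (S(t - r) - S(s - r))[F(u(r))\,dr + G(u(r))\,d\omega]$. For the stochastic diagonal term, Lemma \ref{l3}(i) applied on $[s, t]$ supplies a factor $(t - s)^{\beta'}$ times the $C_\beta^\beta([s, t], L_2(V))$-norm of $r \mapsto S(t - r) G(u(r))$, which is uniformly bounded on $B$. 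For the off-diagonal stochastic part, the semigroup estimate \eqref{eq30} extracts a factor $(t - s)^{\alpha'}$ for any $\alpha' \in (0, 1)$, so one can choose $\alpha'$ compatibly with the constraints $1 - \beta' < \alpha < \alpha' < \beta'$ and $\alpha' > \beta$. The deterministic pieces are nearly Lipschitz and cause no difficulty.

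With these two ingredients, for any sequence $(u_n, \omega_n) \in B \times \hat B$ a diagonal extraction over the rationals combined with the equicontinuity implied by the $C^{\beta_1}$-bound yields a subsequence converging uniformly to some $f \in C([0, T], V)$, and the interpolation inequality
\[
\ltn g \rtn_\beta \leq \ltn g \rtn_{\beta_1}^{\beta/\beta_1} (2\|g\|_\infty)^{1 - \beta/\beta_1}
\]
applied to $g = \mathcal{T}^{I}(u_n, \omega_n) - f$ promotes this to convergence in $C^\beta([0, T], V)$. For the full mapping $\mathcal{T}(u, \omega, u_0) = S(\cdot) u_0 + \mathcal{T}^{I}(u, \omega)$, the linear map $V \ni u_0 \mapsto S(\cdot) u_0 \in C_\beta^\beta([0, T], V)$ is bounded (from \eqref{eq1} and \eqref{eq2} one derives $\|S(\cdot) u_0\|_{\beta, \beta} \leq c \|u_0\|$), hence continuous, so $S(\cdot) \mathcal{K}$ is relatively compact in $C_\beta^\beta$; combined with the continuous embedding $C^\beta \hookrightarrow C_\beta^\beta$ and the sum rule for relatively compact sets, this yields relative compactness of $\mathcal{T}(B, \hat B, \mathcal{K})$ in $C_\beta^\beta$. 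The main obstacle is the enhanced H\"older regularity step: one must verify that the compatibility conditions on the exponents $\alpha, \alpha', \beta, \beta_1, \beta'$ genuinely allow $\beta_1 > \beta$, which is ensured by the strict inequality $\beta < \beta'$ imposed throughout the paper.
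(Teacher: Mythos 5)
Your proposal is correct and follows essentially the same route as the paper: the same increment estimate (the Lemma \ref{l6} splitting with $\rho=0$) giving a uniform $C^{\gamma}$-bound for some $\gamma\in(\beta,\beta')$, pointwise compactness from Lemma \ref{l0} and the compact embedding $V_{\delta}\subset V$, an Arzel\`a--Ascoli/interpolation step in H\"older spaces (which the paper outsources to Lemma 4.5 of \cite{MasNua03} and you prove directly), and continuity of $u_{0}\mapsto S(\cdot)u_{0}$ into $C_{\beta}^{\beta}$ to handle $S(\cdot)\mathcal{K}$.
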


\begin{proof}
For $u\in B$, $\omega\in\hat{B}$ and $T\geq t_{2}\geq t_{1}\geq0$ there exists
a $\gamma\in(\beta,\beta^{\prime})$ such that
\[
\Vert\mathcal{T}^{I}(u,\omega)(t_{2})-\mathcal{T}^{I}(u,\omega)(t_{1}%
)\Vert\leq c\left\vert \!\left\vert \!\left\vert \omega\right\vert
\!\right\vert \!\right\vert _{\beta^{\prime}}(1+\Vert u\Vert_{\beta,\beta
})(t_{2}-t_{1})^{\gamma},
\]
where $c$ is a positive constant that depends on the constants related to $S$,
$F$ and $G$, and $T$. The method to obtain the above estimate is similar to
the calculations in Lemma \ref{l6} setting $\rho=0$, see also Chen \textit{et
al.} \cite{CGGSch14}, hence we omit the proof here.

As a result, the set $\mathcal{T}^{I}(B,\hat B)$ is equicontinuous and bounded
in the space $C^{\gamma}([0,T],V)$.

On the other hand, in virtue of Lemma \ref{l0} we also have that
$\mathcal{T}^{I}(B,\hat B)(t) \in B_{\delta}$, for all $t\in\lbrack0,T]$,
where $B_{\delta}$ is a bounded set in $V_{\delta}$ with $0<\delta
<\beta^{\prime}$. As we know that $V_{\delta}\subset V$ compactly, we obtain
that the set $\mathcal{T}^{I}(B,\hat B)([0,T])$ belongs to a compact set of
$V$. By Lemma 4.5 in \cite{MasNua03} we have that $\mathcal{T}^{I}(B,\hat B)$
is relatively compact in $C^{\beta}([0,T],V)$ if $\beta\in(\alpha,\gamma)$ and
then in $C_{\beta}^{\beta}([0,T],V)$ as well, since $C^{\beta}([0,T],V)
\subset C_{\beta}^{\beta}([0,T],V)$ continuously.

Finally, let $u_{0}^{n}\in\mathcal{K} $. Then up to a subsequence $u_{0}%
^{n}\rightarrow u_{0}$ in $V$, and therefore
\begin{align*}
\left\Vert (S(t)-S(s))(u_{0}^{n}-u_{0})\right\Vert  &  \leq\left\Vert S(t-s)
-\mathrm{Id}\right\Vert _{L(V_{\beta},V)}\left\Vert S(s)\right\Vert _{L\left(
V,V_{\beta}\right)  }\left\Vert u_{0}^{n}-u_{0}\right\Vert \\
&  \leq c_{S}s^{-\beta}(t-s)^{\beta}\left\Vert u_{0}^{n}-u_{0}\right\Vert
,\ 0<s<t\leq T,
\end{align*}
implies that
\[
S(\cdot)u_{0}^{n}\rightarrow S(\cdot)u_{0}\text{ in }C_{\beta}^{\beta
}([0,T],V).
\]
Hence, $S(\cdot)\mathcal{K} $ is relatively compact in $C_{\beta}^{\beta
}([0,T],V).$

Joining the two results we obtain that $\mathcal{T}(B,\hat B,\mathcal{K} )$ is
relatively compact in $C_{\beta}^{\beta}([0,T],V)$.
\end{proof}

In the next result we address the existence of solutions to (\ref{Pm}).

\begin{theorem}
\label{t1} Under the above conditions on $A$, $F$ and $G$, given $T>0$, for
$\omega\in C^{\beta^{\prime}}([0,T],V) $ and $u_{0}\in V$ there exists at
least one mild pathwise solution $u\in C^{\beta}_{\beta}([0,T],V)$ to the
equation (\ref{P}) given by (\ref{Pm}).
\end{theorem}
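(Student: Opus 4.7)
The plan is to apply Schauder's fixed point theorem to the operator $\mathcal{T}(\cdot,\omega,u_{0})\colon C_{\beta}^{\beta}([0,T],V)\to C_{\beta}^{\beta}([0,T],V)$, working with one of the equivalent weighted norms $\|\cdot\|_{\beta,\beta;\rho}$ for a parameter $\rho>0$ to be fixed later. Fixed points of this operator are precisely the mild pathwise solutions of (\ref{Pm}).

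First I would establish invariance of a suitable closed ball. By Lemma \ref{l6},
\[
\|\mathcal{T}(u,\omega,u_{0})\|_{\beta,\beta;\rho}\le c_{S}\|u_{0}\|+c_{T}\ltn\omega\rtn_{\beta^{\prime}}K(\rho)(1+\|u\|_{\beta,\beta;\rho}),
\]
with $K(\rho)\to 0$ as $\rho\to\infty$. Choosing $\rho=\rho(\omega)$ large enough that $c_{T}\ltn\omega\rtn_{\beta^{\prime}}K(\rho)\le 1/2$ and then setting $R:=2c_{S}\|u_{0}\|+1$, the closed ball $\bar{B}_{R}:=\{u\in C_{\beta}^{\beta}([0,T],V):\|u\|_{\beta,\beta;\rho}\le R\}$ satisfies $\mathcal{T}(\bar{B}_{R},\omega,u_{0})\subset \bar{B}_{R}$. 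Because the norms $\|\cdot\|_{\beta,\beta}$ and $\|\cdot\|_{\beta,\beta;\rho}$ are equivalent, $\bar{B}_{R}$ is a closed, convex, bounded subset of the Banach space $C_{\beta}^{\beta}([0,T],V)$.

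Next I would invoke Theorem \ref{t3} with $\mathcal{K}=\{u_{0}\}$ (a compact subset of $V$) and any closed ball $\hat{B}\subset C^{\beta^{\prime}}([0,T],V)$ containing $\omega$, to deduce that $\mathcal{T}(\bar{B}_{R},\omega,u_{0})$ is relatively compact in $C_{\beta}^{\beta}([0,T],V)$. It remains to check the continuity of $u\mapsto\mathcal{T}(u,\omega,u_{0})$ on $\bar{B}_{R}$. If $u_{n}\to u$ in $C_{\beta}^{\beta}([0,T],V)$, then $u_{n}\to u$ uniformly on $[0,T]$; using continuity of $F$, Lipschitz continuity of $G$, and Lebesgue's dominated convergence applied to the series expansion (\ref{integral}) (the integrand is controlled by the uniform bounds on the fractional derivatives already exploited in Lemma \ref{l6}), one obtains the pointwise convergence $\mathcal{T}(u_{n},\omega,u_{0})(t)\to \mathcal{T}(u,\omega,u_{0})(t)$ for every $t\in[0,T]$. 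Combined with the relative compactness just established, every subsequence of $(\mathcal{T}(u_{n},\omega,u_{0}))_{n}$ admits a further subsequence converging in $C_{\beta}^{\beta}([0,T],V)$, and by uniqueness of the pointwise limit this limit must equal $\mathcal{T}(u,\omega,u_{0})$. Hence the full sequence converges in $C_{\beta}^{\beta}([0,T],V)$, which is the desired norm continuity on $\bar{B}_{R}$.

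With invariance, relative compactness of the image, and continuity in hand, Schauder's fixed point theorem produces a fixed point $u\in\bar{B}_{R}$ of $\mathcal{T}(\cdot,\omega,u_{0})$, which is the required mild pathwise solution. I expect the main obstacle to lie in the continuity step: since $G$ is only Lipschitz, rather than twice Fr\'echet differentiable as in \cite{CGGSch14}, one cannot hope to control the H\"older seminorm of $G(u_{n})-G(u)$ directly in terms of $\|u_{n}-u\|_{\beta,\beta;\rho}$ so as to obtain a direct convergence estimate of the type (\ref{eq20}); the detour through subsequential compactness combined with pointwise uniqueness of the limit is what replaces the direct contractivity argument and ultimately forces us to work in the multivalued setting later on, since no estimate of this kind produces uniqueness.
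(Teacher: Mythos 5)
Your proposal is correct and follows essentially the same route as the paper: invariance of a closed ball in the $\rho$-weighted norm via Lemma \ref{l6}, compactness of $\mathcal{T}(\cdot,\omega,u_{0})$ via Theorem \ref{t3}, pointwise continuity of the integral terms (with dominated convergence handling the second-difference term of $G$ that Lipschitz continuity alone cannot control), and Schauder's theorem. Your explicit upgrade from pointwise convergence to $C_{\beta}^{\beta}$-convergence via relative compactness and uniqueness of limits is a step the paper leaves implicit, but the argument is the same.
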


\begin{proof}
We choose $\rho_{0}$ large enough such that $c_{T}\left\vert \!\left\vert
\!\left\vert \omega\right\vert \!\right\vert \!\right\vert _{\beta^{\prime}%
}K(\rho_{0})<\frac{1}{2}$. Therefore, from Lemma \ref{l6}, $\mathcal{T(}%
$\textperiodcentered$,\omega,u_{0}\mathcal{)}$ maps the ball
\[
B:=B(0,R)=\{u\in C_{\beta}^{\beta}([0,T],V):\Vert u\Vert_{\beta,\beta;\rho
_{0}}\leq R\},\quad\mbox{ with }\,R:=1+2c_{S}\Vert u_{0}\Vert
\]
into itself, that is, $\mathcal{T}(B,\omega,u_{0})\subset B.$

It is clear that $B$ is convex, bounded and closed and we know by Theorem
\ref{t3} that the operator $\mathcal{T}($\textperiodcentered$,\omega,u_{0})$
is compact. In order to apply Schauder's theorem, it only remains to check
that $\mathcal{T(}$\textperiodcentered$,\omega,u_{0}\mathcal{)}:B\mapsto B$ is
continuous. Assume that $(u^{n})_{n\in\mathbb{N}}\subset B$ is such that
$u^{n}(0)=u_{0}$ for every $n\in\mathbb{N}$, and $u^{n}\rightarrow u$ in
$C_{\beta}^{\beta}([0,T],V)$. We shall check that for $t\in\lbrack0,T],$
\[
\lim_{n\rightarrow\infty}\Vert\mathcal{T}(u^{n},\omega,u_{0})(t)-\mathcal{T}%
(u,\omega,u_{0})(t)\Vert=0.
\]

We only need to consider the integral terms. First of all, due to the
continuity of $F$,
\[
\left\Vert \int_{0}^{t}S(t-r)(F(u^{n}(r))-F(u(r)))dr\right\Vert \leq
c_{S,F}\int_{0}^{t}\|F(u^{n}(r))-F(u(r))\|dr
\]
and $f^{n}(r):=\|F(u^{n}(r))-F(u(r))\| \to0$ when $n\to\infty$, for
$r\in[0,t]$. Moreover, it has a trivial integrable majorant, given by
$2c_{F}+L_{F}(\|u^{n}\|_{\beta,\beta}+\|u\|_{\beta,\beta})\leq2c_{F}+2L_{F}
R.$

For the stochastic integral
\begin{align*}
&  \left\Vert \int_{0}^{t}S(t-r)(G(u^{n}(r))-G(u(r)))d\omega\right\Vert \\
&  \leq c\left\vert \!\left\vert \!\left\vert \omega\right\vert \!\right\vert
\!\right\vert _{\beta^{\prime},0,T}\int_{0}^{t}\left(  \frac{\left\Vert
S(t-r)\right\Vert _{L(V)}\left\Vert G(u^{n}(r))-G(u(r))\right\Vert _{L_{2}%
(V)}}{r^{\alpha}}\right. \\
&  +\int_{0}^{r}\frac{\left\Vert S(t-r)-S(t-q)\right\Vert _{L(V)}\left\Vert
G(u^{n}(r))-G(u(r))\right\Vert _{L_{2}(V)}}{(r-q)^{1+\alpha}}dq\\
&  +\int_{0}^{r}\left.  \frac{\left\Vert S(t-q)\right\Vert _{L(V)}\left\Vert
G(u^{n}(r))-G(u(r))-G(u^{n}(q))+G(u(q))\right\Vert _{L_{2}(V)}}%
{(r-q)^{1+\alpha}}dq\right)  (t-r)^{\alpha+\beta^{\prime}-1}dr\\
&  =A_{1}+A_{2}+A_{3}.
\end{align*}
Using the properties of $S$ and $G$ we have that the first two terms in the
last inequality can be estimated as follows:%
\begin{align*}
A_{1}+A_{2}  &  \leq c_{S}\left\vert \!\left\vert \!\left\vert \omega
\right\vert \!\right\vert \!\right\vert _{\beta^{\prime},0,T}\left(  \int%
_{0}^{t}\frac{L_{G}\left\Vert u^{n}(r)-u(r)\right\Vert }{r^{\alpha}%
}(t-r)^{\alpha+\beta^{\prime}-1}dr\right. \\
&  \left.  +\int_{0}^{t}\int_{0}^{r}\frac{L_{G}\left\Vert u^{n}%
(r)-u(r)\right\Vert (r-q)^{\beta}}{(t-r)^{\beta}(r-q)^{1+\alpha}%
}dq(t-r)^{\alpha+\beta^{\prime}-1}dr\right) \\
&  \leq c_{S,G} t^{\beta^{\prime}}\left\vert \!\left\vert \!\left\vert
\omega\right\vert \!\right\vert \!\right\vert _{\beta^{\prime},0,T} \left\Vert
u^{n}-u\right\Vert _{\beta,\beta}%
\end{align*}
thus we get that $A_{1}+A_{2}\rightarrow0$ as $n\rightarrow\infty.$

For the term $A_{3}$ we define the functions
\[
h^{n}(r,q)=\frac{\left\Vert G(u^{n}(r))-G(u(r))-G(u^{n}(q))+G(u(q))\right\Vert
_{L_{2}(V)}}{(r-q)^{1+\alpha}}(t-r)^{\alpha+\beta^{\prime}-1}.
\]
The Lipschitz property of $G$ implies that%
\[
h^{n}(r,q)\rightarrow0\text{ for a.a. }(r,q) \in D=\{(r,q):0\leq q\leq r\leq
t\}.
\]
On the other hand, we construct a majorant in the usual way:%
\begin{align*}
h^{n}(r,q)  &  \leq L_{G}\frac{\left\Vert u^{n}(r)-u^{n}(q)\right\Vert
+\left\Vert u(r)-u(q)\right\Vert }{(r-q)^{1+\alpha-\beta}(r-q)^{\beta}}%
\frac{q^{\beta}}{q^{\beta}}(t-r)^{\alpha+\beta^{\prime}-1}\\
&  \leq L_{G} (\left\Vert u^{n}\right\Vert _{\beta,\beta}+\left\Vert
u\right\Vert _{\beta,\beta})\frac{(t-r)^{\alpha+\beta^{\prime}-1}%
}{(r-q)^{1+\alpha-\beta}q^{\beta}}\\
&  \leq2R L_{G} \frac{(t-r)^{\alpha+\beta^{\prime}-1}}{(r-q)^{1+\alpha-\beta
}q^{\beta}}=f(r,q)\in L^{1}(D).
\end{align*}
By Lebesgue's theorem and Fubini's theorem we have that $A_{3}\rightarrow0$.

Finally, applying Schauder's fixed point theorem, the problem (\ref{P}) has at
least one mild pathwise solution given by (\ref{Pm}).
\end{proof}

\begin{remark}
As it can be easily seen in the proof of the Lemma \ref{l6}, the integrals are
well-defined just considering the norm of the space $C^{\beta}(0,T],V)$. As we
already pointed out in Section \ref{preli}, the factor $s^{\beta}$ that
appears in the norm of the space $C_{\beta}^{\beta}(0,T],V)$ is only required
due to the fact that the semigroup $S$ is not H\"{o}lder continuous at zero.
In fact, if the initial condition were in the space $V_{\beta},$ then we could
simply work in the space of H\"{o}lder continuous functions with exponent
$\beta$, since then
\[
\left\vert \!\left\vert \!\left\vert S(\cdot)u_{0}\right\vert \!\right\vert
\!\right\vert _{\beta,0,T}\leq\sup_{0\leq s<t\leq T}\frac{\Vert S(s)\Vert
_{L(V)}\Vert S(t-s)-\mathrm{Id}\Vert_{L(V_{\beta},V)}\Vert u_{0}%
\Vert_{V_{\beta}}}{(t-s)^{\beta}}\leq c_{S}\sup_{0\leq s<t\leq T}%
\frac{(t-s)^{\beta}\Vert u_{0}\Vert_{V_{\beta}}}{(t-s)^{\beta}}<\infty.
\]

Next we would like to establish, on the base of a concatenation procedure,
that every mild solution can be extended to be a \textit{globally defined}
mild solution, that is, it exists for any $t\geq0$. Denote by $u_{1}\in
C_{\beta}^{\beta}([0,T_{1}],V)$ the mild solution obtained in Theorem \ref{t1}
corresponding to the initial condition $u_{0}\in V$ and the driving path
$\omega\in C^{\beta^{\prime}}([0,T_{1}],V)$. Since by Corollary \ref{l1} we
know that $u_{1}(T_{1})\in V_{\beta}$, then considering as initial condition
$u_{2}(0):=u_{1}(T_{1})$ and taking the new driving path $\theta_{T_{1}}%
\omega\in C^{\beta^{\prime}}([0,T_{2}],V)$, thanks to Theorem \ref{t1} and the
above discussion we are able to obtain a mild solution $u_{2}\in C^{\beta
}([0,T_{2}],V)$.
\end{remark}

\begin{lemma}
\label{Concatenation}(Concatenation) Let $u_{1}\in C_{\beta}^{\beta}%
([0,T_{1}],V)$, $u_{2}\in C^{\beta}([0,T_{2}],V)$ be mild solutions to
(\ref{P}) with $u_{1}(0)\in V$, $u_{2}(0)=u_{1}(T_{1})$, for $\omega\in
C^{\beta^{\prime}}([0,T_{1}],V)$ and $\theta_{T_{1}}\omega\in C^{\beta
^{\prime}}([0,T_{2}],V)$, respectively, and let
\[
u(t)=\left\{
\begin{array}
[c]{c}%
u_{1}(t)\text{ if }t\in\lbrack0,T_{1}],\\
u_{2}(t-T_{1})\text{ if }t\in\lbrack T_{1},T_{1}+T_{2}].
\end{array}
\right.
\]
Then $u\in C_{\beta}^{\beta}([0,T_{1}+T_{2}],V)$ is a mild solution to
(\ref{P}) on $[0,T_{1}+T_{2}]$.
\end{lemma}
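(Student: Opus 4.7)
The plan is to verify two things: first, that $u$ has the required regularity, namely $u\in C^{\beta}_{\beta}([0,T_1+T_2],V)$; and second, that $u$ satisfies the mild formulation (\ref{Pm}) on $[0,T_1+T_2]$ with initial value $u_1(0)$.

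For regularity, continuity at the gluing point $t=T_1$ is immediate from $u_1(T_1)=u_2(0)$. To bound $s^\beta\|u(t)-u(s)\|/(t-s)^\beta$ uniformly in $0<s<t\le T_1+T_2$, I would split into three regimes. On $0<s<t\le T_1$ the supremum is controlled by $\|u_1\|_{\beta,\beta,0,T_1}$. On $T_1\le s<t\le T_1+T_2$ the weight $s^\beta$ lies between $T_1^\beta$ and $(T_1+T_2)^\beta$, so the unweighted Hölder norm of $u_2$ on $[0,T_2]$ suffices. In the mixed regime $0<s\le T_1\le t$, I would decompose
\[
u(t)-u(s)=[u_2(t-T_1)-u_2(0)]+[u_1(T_1)-u_1(s)]
\]
and use $(t-T_1)^\beta\le(t-s)^\beta$ for the first summand, together with $s^{-\beta}(T_1-s)^\beta\le s^{-\beta}(t-s)^\beta$ and the $C^{\beta}_{\beta}$ bound on $u_1$ for the second.

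For the mild identity, the case $t\in[0,T_1]$ is the definition of $u_1$ as a mild solution. For $t\in[T_1,T_1+T_2]$, I would start from the mild representation of $u_2$ on $[0,t-T_1]$ driven by $\theta_{T_1}\omega$, substitute $u_2(0)=u_1(T_1)$ via the mild representation of $u_1$ at time $T_1$, and then apply $S(t-T_1)$ to that expression. Combining the semigroup law $S(t-T_1)S(T_1-r)=S(t-r)$ with the fact that the bounded linear operator $S(t-T_1)$ commutes with both the Bochner integral and the fractional integral of (\ref{integral})—the latter being evident from the defining series, or equivalently from the Riemann-sum approximation of the Young integral—converts the $S(t-T_1)$-prefactor into $S(t-r)$ inside the integrals on $[0,T_1]$. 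For the integrals on $[0,t-T_1]$, the change of variables $r\mapsto r+T_1$ handles the Lebesgue integral, while Lemma \ref{l3}(iii) applied with $\tau=-T_1$ to the driver $\theta_{T_1}\omega$, combined with the observation that $\theta_{-T_1}\theta_{T_1}\omega$ differs from $\omega$ only by an additive constant (which is annihilated by the fractional derivative and hence produces the same Young integral), yields the analogous result for the stochastic integral. Finally, additivity (Lemma \ref{l3}(ii)) glues the contributions over $[0,T_1]$ and $[T_1,t]$ into a single integral on $[0,t]$.

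The main technical obstacle is the careful bookkeeping around the shift $\theta_{T_1}$ and the justification for pulling $S(t-T_1)$ inside the fractional integral. Neither step is conceptually deep: the first is a direct application of Lemma \ref{l3}(iii) together with invariance of (\ref{integral}) under additive constants in $\omega$, and the second follows either from continuity of $S(t-T_1)$ applied to the convergent series defining the integral, or from its realization as a limit of Riemann–Stieltjes sums.
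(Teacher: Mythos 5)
Your proposal is correct and follows essentially the same route as the paper: the identical three-regime splitting of the weighted H\"older seminorm with the triangle inequality through $u_2(0)=u_1(T_1)$ in the mixed case, and for the mild identity the same substitution of the mild representation of $u_1(T_1)$ into that of $u_2$, combined with the semigroup law and parts (ii) and (iii) of Lemma \ref{l3}. The only difference is that you make explicit the (implicitly used) commutation of the bounded operator $S(t-T_1)$ with the fractional integral, which is a harmless and correct addition.
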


\begin{proof}
First, we need to show that $u\in C_{\beta}^{\beta}([0,T_{1}+T_{2}],V).$
Trivially
\[
\|u\|_{\infty,0,T_{1}+T_{2}} \leq\|u_{1}\|_{\infty,0,T_{1}}+\|u_{2}%
\|_{\infty,0,T_{2}}.
\]
On the other hand, since $u_{2}(0)=u_{1}(T_{1})$,
\begin{align*}
\left|  \! \left|  \! \left|  u\right|  \! \right|  \! \right|  _{\beta
,\beta,0,T_{1}+T_{2}}  &  \leq\max\bigg( \sup_{0< s<t\leq T_{1}}s^{\beta}%
\frac{\|u_{1}(t)-u_{1}(s)\|}{(t-s)^{\beta}},\sup_{T_{1}\leq s<t\leq
T_{1}+T_{2}}s^{\beta}\frac{\|u_{2}(t-T_{1})-u_{2}(s-T_{1})\|}{(t-s)^{\beta}%
},\\
&  \qquad\qquad\sup_{0< s< T_{1} <t\leq T_{1}+T_{2}} s^{\beta}\frac
{\|u_{2}(t-T_{1})-u_{1}(s)\|}{(t-s)^{\beta}} \bigg)\\
&  \leq\max\bigg(\left|  \! \left|  \! \left|  u_{1}\right|  \! \right|  \!
\right|  _{\beta,\beta,0,T_{1}},(T_{1}+T_{2})^{\beta}\left|  \! \left|  \!
\left|  u_{2}\right|  \! \right|  \! \right|  _{\beta,0,T_{2}}, T_{1}^{\beta
}\sup_{T_{1} <t\leq T_{1}+T_{2}} \frac{\|u_{2}(t-T_{1})-u_{2}(T_{1}-T_{1}%
)\|}{(t-T_{1})^{\beta}}\\
&  \qquad\qquad+\sup_{0< s< T_{1} } s^{\beta}\frac{\|u_{1}(T_{1})-u_{1}%
(s)\|}{(T_{1}-s)^{\beta}} \bigg)\\
&  \leq\max(\left|  \! \left|  \! \left|  u_{1}\right|  \! \right|  \!
\right|  _{\beta,\beta,0,T_{1}},(T_{1}+T_{2})^{\beta} \left|  \! \left|  \!
\left|  u_{2}\right|  \! \right|  \! \right|  _{\beta,0,T_{2}}, T_{1}^{\beta
}\left|  \! \left|  \! \left|  u_{2}\right|  \! \right|  \! \right|
_{\beta,0,T_{2}}+\left|  \! \left|  \! \left|  u_{1}\right|  \! \right|  \!
\right|  _{\beta,\beta,0,T_{1}})\\
&  =\left|  \! \left|  \! \left|  u_{1}\right|  \! \right|  \! \right|
_{\beta,\beta,0,T_{1}}+(T_{1}+T_{2})^{\beta}\left|  \! \left|  \! \left|
u_{2}\right|  \! \right|  \! \right|  _{\beta,0,T_{2}}<\infty.
\end{align*}

Second, we will prove that $u$ satisfies the integral equality (\ref{Pm}).
Since this is obvious if $t\in\lbrack0,T_{1}]$, let $t\in(T_{1},T_{1}+T_{2}]$.
Thus, in virtue of properties (ii) and (iii) of Lemma \ref{l3}, $u(t)=u_{2}%
\left(  t-T_{1}\right)  $ is such that
\begin{align*}
u(t)  &  =S(t-T_{1})u_{1}(T_{1})+\int_{0}^{t-T_{1}}S(t-T_{1}-r)F(u_{2}%
(r))dr+\int_{0}^{t-T_{1}}S(t-T_{1}-r)G(u_{2}(r))d\theta_{T_{1}}\omega\\
&  =S(t-T_{1})\left(  S(T_{1})u_{1}(0)+\int_{0}^{T_{1}}S(T_{1}-r)F(u_{1}%
(r))dr+\int_{0}^{T_{1}}S(T_{1}-r)G(u_{1}(r))d\omega\right) \\
&  \qquad+\int_{0}^{t-T_{1}}S(t-T_{1}-r)F(u_{2}(r))dr+\int_{0}^{t-T_{1}%
}S(t-T_{1}-r)G(u_{2}(r))d\theta_{T_{1}}\omega\\
&  =S(t)u_{1}(0)+\int_{0}^{T_{1}}S(t-r)F(u_{1}(r))dr+\int_{0}^{T_{1}%
}S(t-r)G(u_{1}(r))d\omega\\
&  \qquad+\int_{T_{1}}^{t}S(t-r)F(u_{2}(r-T_{1}))dr+\int_{T_{1}}%
^{t}S(t-r)G(u_{2}(r-T_{1}))d\omega\\
&  =S(t)u(0)+\int_{0}^{t}S(t-r)F(u(r))dr+\int_{0}^{t}S(t-r)G(u(r))d\omega.
\end{align*}

\end{proof}

The above method can be repeated in such a way that the corresponding
solutions to (\ref{P}) are globally defined. \newline

We also prove that the solutions satisfy the translation property.

\begin{lemma}
\label{Translation}(Translation) Let $u(\cdot)$ be a mild solution to
(\ref{P}) on $[0,T]$ with $u_{0}\in V$ for $\omega\in C^{\beta^{\prime}%
}([0,T],V)$ and let $0<s<T$. Then the function $v(\cdot)=u(\cdot+s)$ is a mild
solution on $[0,T-s]$ with $v(0)=u(s)$ for the driving path $\theta_{s}\omega$.
\end{lemma}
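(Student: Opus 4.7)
The plan is to verify two things in order: (a) the regularity $v\in C_\beta^\beta([0,T-s],V)$, and (b) the mild integral identity for $v$ with initial value $u(s)$ and driving path $\theta_s\omega$. For (a), shifting by $s>0$ moves the interval of interest away from the origin where the weighted norm of $C_\beta^\beta$ permits a possible blow-up of the Hölder quotient, so for $0\le t_1<t_2\le T-s$ one has
\[
\frac{\|u(t_2+s)-u(t_1+s)\|}{(t_2-t_1)^\beta}\le (t_1+s)^{-\beta}\,\|u\|_{\beta,\beta,0,T}\le s^{-\beta}\|u\|_{\beta,\beta,0,T},
\]
hence $v\in C^\beta([0,T-s],V)\subset C_\beta^\beta([0,T-s],V)$.

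For (b), I would start from the mild equation for $u$ evaluated at $t+s$ (with $t\in[0,T-s]$),
\[
u(t+s)=S(t+s)u_0+\int_0^{t+s}S(t+s-r)F(u(r))\,dr+\int_0^{t+s}S(t+s-r)G(u(r))\,d\omega,
\]
and split each integral at $r=s$ via Lemma \ref{l3}(ii). On the part over $[0,s]$ I use the semigroup identity $S(t+s-r)=S(t)S(s-r)$ and factor the bounded operator $S(t)$ out of both integrals; combined with $S(t+s)u_0=S(t)S(s)u_0$, these three contributions collapse (because $u$ itself satisfies the mild equation at time $s$) to $S(t)u(s)=S(t)v(0)$. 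On the part over $[s,t+s]$, the substitution $r\mapsto r+s$ in the Lebesgue integral together with the translation property Lemma \ref{l3}(iii) applied to the pathwise integral (with shift $\tau=s$) give exactly $\int_0^t S(t-r)F(v(r))\,dr$ and $\int_0^t S(t-r)G(v(r))\,d\theta_s\omega$. Assembling the two pieces produces the mild identity for $v$ on $[0,T-s]$.

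The main obstacle I anticipate is the commutation of $S(t)$ with the pathwise integral on $[0,s]$, since that integral is defined via fractional derivatives through the series \eqref{integral}. Because $S(t)$ is a bounded linear operator on $V$, the commutation follows termwise from the series representation; equivalently, $\int_0^s S(t)[S(s-r)G(u(r))]\,d\omega$ and $S(t)\int_0^s S(s-r)G(u(r))\,d\omega$ coincide as Young-type integrals, which is precisely what justifies absorbing the $[0,s]$ contributions into $S(t)u(s)$.
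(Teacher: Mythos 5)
Your proof is correct and follows essentially the same route as the paper, which for this lemma simply says to ``apply the techniques from the last lemma'': your splitting of the integrals at $r=s$, the use of Lemma \ref{l3}(ii)--(iii) together with the semigroup property to collapse the $[0,s]$ contributions into $S(t)u(s)$, and the commutation of the bounded operator $S(t)$ with the pathwise integral are precisely the steps used in the proof of the concatenation lemma, run in the reverse direction. Your additional observation that the shift away from the origin upgrades $v$ to $C^{\beta}([0,T-s],V)$ is also consistent with the paper's remarks.
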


For the proof of this lemma we can apply the techniques from the last lemma.

\section{Multivalued non-autonomous and random dynamical systems}

\label{MDS}

We start this section by introducing the general concept of multivalued
non-autonomous and random dynamical systems. Later we will apply it to the set
of solutions of the problem (\ref{P}).

Let $\Omega$ be some set. On $\Omega$ we define a flow of non-autonomous
perturbations $\theta:\mathbb{R} \times\Omega\mapsto\Omega$ by
\[
\theta_{0} \omega=\omega,\quad\theta_{t} \circ\theta_{\tau}=\theta_{t+\tau},
\quad t,\tau\in\mathbb{R} ,
\]
for $\omega\in\Omega$.

We now give the definition of a metric dynamical system, that is a general
model for a noise. On a probability space $(\Omega,\mathcal{F},\mathbb{P})$ we
consider a $\mathcal{B}(\mathbb{R})\otimes\mathcal{F},\mathcal{F}$-measurable
flow $\theta$ such that $\theta_{t}\mathbb{P}=\mathbb{P}$ for every
$t\in\mathbb{R}$. Often it is also assumed that $\mathbb{P}$ is ergodic with
respect to the flow $\theta$, which means that, in addition to the invariance
property of $\mathbb{P}$ defined above, given an invariant set $A\in
\mathcal{F}$ (that is, $\theta_{t}A=A$, for all $t\in\mathbb{R}$), we have
either $\mathbb{P}(A)=0$ or $\mathbb{P}(A)=1$. Then the quadruple
$(\Omega,\mathcal{F},\mathbb{P},\theta)$ is called an ergodic metric dynamical system.

\begin{remark}
For the following results we only need a \textit{semiflow} instead of a flow
$\theta$, that is, defined on $\mathbb{R} ^{+}$. However, for further
considerations regarding the existence of random attractors in a forthcoming
paper, we will need to deal with a flow as introduced above. For the existence
of a flow defined on $\mathbb{R} $ we refer to \cite{Sinai} Page 240.
\end{remark}

Denote by $P_{f}(V)$ the set of all non-empty closed subsets of $V.$

\begin{definition}
\label{MNDS}Consider a flow of non-autonomous perturbations $\theta:\mathbb{R}
\times\Omega\mapsto\Omega$. A multivalued mapping $\Phi:\mathbb{R}^{+}%
\times\Omega\times V\rightarrow P_{f}(V)$ is called a multivalued
non--autonomous dynamical system (MNDS) if:

\begin{itemize}
\item[i)] $\Phi(0,\omega,\cdot)=\mathrm{id}_{V}$,

\item[ii)] $\Phi(t+\tau,\omega,x)\subset\Phi(t,\theta_{\tau}\omega,\Phi
(\tau,\omega,x))$ \textit{\ (cocycle property) \ for all} \textit{\ }%
$t,\tau\in\mathbb{R}^{+},x\in V,\omega\in\Omega.$
\end{itemize}

It is called a strict MNDS if $\Phi(t+\tau,\omega,x)=\Phi(t,\theta_{\tau
}\omega,\Phi(\tau,\omega,x))$ \textit{\ for all} \textit{\ }$t,\tau
\in\mathbb{R}^{+},\ x\in V,\ \omega\in\Omega.$

Assume now that $(\Omega, \mathcal{F} ,\mathbb{P} , \theta)$ is an (ergodic)
metric dynamical system. An MNDS is called a multivalued random dynamical
system (MRDS) if the multivalued mapping $(t,\omega,x)\rightarrow\Phi
(t,\omega,x)$ is $\mathcal{B}(\mathbb{R}^{+})\otimes\mathcal{F}\otimes
\mathcal{B}(V)$ measurable, i.e.
\[
\{(t,\omega,x):\Phi(t,\omega,x)\cap\mathcal{O} \not =\emptyset\}\in
\mathcal{B}(\mathbb{R}^{+})\otimes\mathcal{F}\otimes\mathcal{B}(V)
\]
for every open set $\mathcal{O} $ of $V$.
\end{definition}

A suitable concept of continuity in the setting of multivalued dynamical
systems is the following one.

\begin{definition}
$\Phi(t, \omega, \cdot)$ is called upper semicontinuous at $x_{0}$ if for
every open neighborhood $\mathcal{O} \subset V$ of the set $\Phi(t, \omega,
x_{0})$ there exists $\delta>0$ such that if $\|x_{0}-y\| <\delta$ then
$\Phi(t, \omega, y)\in\mathcal{O} $. $\Phi(t, \omega, \acute{a})$ is called
upper semicontinuous if it is upper semicontinuous at every $x_{0}$ in $V$.
\end{definition}

This definition can be extended to the one of upper semicontinuity with
respect to all variables assuming that $\Omega$ is a Polish space. We are now
able to formulate a general condition ensuring that an MNDS defines an MRDS.
For the proof, see Lemma 2.5 in \cite{CGSV08}.

\begin{lemma}
\label{sep} Let $\Omega$ be a Polish space and let $\mathcal{F}$ be the
associated Borel $\sigma$-algebra. Suppose that $(t, \omega, x) \mapsto
\Phi(t,\omega,x)$ is upper semicontinuous. Then $\Phi$ is measurable in the
sense of Definition \ref{MNDS}.
\end{lemma}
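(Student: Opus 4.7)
The plan is to show that for every open set $\mathcal{O}\subset V$ the \emph{graph trace}
\[
E_{\mathcal{O}}:=\{(t,\omega,x)\in\mathbb{R}^{+}\times\Omega\times V:\Phi(t,\omega,x)\cap\mathcal{O}\neq\emptyset\}
\]
belongs to $\mathcal{B}(\mathbb{R}^{+})\otimes\mathcal{F}\otimes\mathcal{B}(V)$. The first move is to pass to the dual characterization of upper semicontinuity for closed-valued maps: for any \emph{closed} $F\subset V$, the set
\[
C_{F}:=\{(t,\omega,x):\Phi(t,\omega,x)\cap F\neq\emptyset\}
\]
is closed in the product topology on $\mathbb{R}^{+}\times\Omega\times V$. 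This is because its complement equals $\{(t,\omega,x):\Phi(t,\omega,x)\subset V\setminus F\}$, and $V\setminus F$ is an open neighborhood of $\Phi(t,\omega,x)$ whenever $(t,\omega,x)$ lies in this complement; upper semicontinuity in all three variables (which uses that $\Omega$ is Polish, hence metric, so that openness has the standard sequential meaning) then exhibits an open ball around $(t,\omega,x)$ inside the complement.

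Next I would exploit the fact that $V$ is a separable metric space: every open set is $F_\sigma$. Concretely, for $\mathcal{O}\neq V$ put
\[
F_{n}:=\{x\in V:d(x,V\setminus\mathcal{O})\geq 1/n\},\qquad n\geq 1,
\]
so that each $F_{n}$ is closed and $\mathcal{O}=\bigcup_{n\geq 1}F_{n}$ (the case $\mathcal{O}=V$ is trivial, giving the whole space). Consequently
\[
E_{\mathcal{O}}=\bigcup_{n\geq 1}C_{F_{n}},
\]
a countable union of closed sets in the product topology, hence a Borel set of $\mathbb{R}^{+}\times\Omega\times V$ endowed with its product topology.

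The final step is to identify $\mathcal{B}(\mathbb{R}^{+}\times\Omega\times V)$ with the product $\sigma$-algebra $\mathcal{B}(\mathbb{R}^{+})\otimes\mathcal{F}\otimes\mathcal{B}(V)$. Since $\mathbb{R}^{+}$, $\Omega$ and $V$ are all separable metric spaces (the Polish hypothesis is used here for $\Omega$), this equality is a standard fact: the product topology has a countable basis of rectangles, and such a basis generates precisely the product $\sigma$-algebra. Therefore $E_{\mathcal{O}}\in\mathcal{B}(\mathbb{R}^{+})\otimes\mathcal{F}\otimes\mathcal{B}(V)$, which is exactly the measurability required in Definition \ref{MNDS}.

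The only real subtlety is in the first step, where one must be careful that the sequential/open-neighborhood reformulation of upper semicontinuity works jointly in $(t,\omega,x)$; this is precisely what the Polish assumption on $\Omega$ buys, and it is why the statement is phrased over a Polish space rather than a general measurable space. Everything else is a routine $F_\sigma$-decomposition and the standard identification of Borel and product $\sigma$-algebras for separable metric factors.
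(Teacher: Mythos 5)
Your argument is correct. Note that the paper does not actually prove this lemma; it defers to Lemma 2.5 of \cite{CGSV08}, so there is no in-paper proof to compare against, but your route is the standard one and all three steps hold up: (i) for a neighborhood-type definition of upper semicontinuity the set $\{(t,\omega,x):\Phi(t,\omega,x)\subset W\}$ is open for every open $W\subset V$, hence $C_F$ is closed for closed $F$; (ii) since $V$ is metric, every open $\mathcal{O}$ is the increasing union of the closed sets $F_n=\{x:d(x,V\setminus\mathcal{O})\geq 1/n\}$, and because $\Phi$ has nonempty values, $\Phi(t,\omega,x)\cap\mathcal{O}\neq\emptyset$ iff $\Phi(t,\omega,x)\cap F_n\neq\emptyset$ for some $n$, giving $E_{\mathcal{O}}=\bigcup_n C_{F_n}$; (iii) the identification $\mathcal{B}(\mathbb{R}^+\times\Omega\times V)=\mathcal{B}(\mathbb{R}^+)\otimes\mathcal{F}\otimes\mathcal{B}(V)$ holds because all three factors are second countable. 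Two minor remarks: completeness of $\Omega$ is never used, only separable metrizability (Polish is just a convenient sufficient hypothesis); and in step (i) no sequential argument is needed at all if upper semicontinuity is taken in the neighborhood sense, so the parenthetical about sequences is harmless but superfluous. Your proof is more elementary than the alternative closed-graph-plus-projection-theorem route, which would genuinely require completeness and analytic-set machinery.
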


As a result, an MNDS $\Phi$ that it is upper semicontinuous with respect to
its three variables becomes an MRDS, provided that the system of
non-autonomous perturbations is a metric dynamical system. But we stress once
again that above we have requested the separability of $\Omega$.\newline

In what follows we present two examples of metric dynamical systems describing
a Gau\ss \, noise given by the fractional Brownian motion with any Hurst
parameter $H\in(0,1)$ (fBm to short). Given $H\in(0,1)$, a continuous centered
Gau{\ss }ian process $\beta^{H}(t)$, $t\in\mathbb{R}$, with the covariance
function
\[
\mathbb{E}\beta^{H}(t)\beta^{H}(s)=\frac{1}{2}(|t|^{2H}+|s|^{2H}%
-|t-s|^{2H}),\qquad t,\,s\in\mathbb{R}%
\]
is called a two--sided one-dimensional fractional Brownian motion, and $H$ is
the Hurst parameter. Assume that $Q$ is a bounded and symmetric positive
linear operator on $V$ which is of trace class, i.e., for a complete
orthonormal basis $\{e_{i}\}_{i\in{\mathbb{N}}}$ in $V$ there exists a
sequence of nonnegative numbers $\{q_{i}\}_{i\in{\mathbb{N}}}$ such that $tr
Q:=\sum_{i=1}^{\infty}q_{i} <\infty$. Then a continuous $V$-valued fractional
Brownian motion $B^{H}$ with covariance operator $Q$ and Hurst parameter $H$
is defined by%

\[
B^{H}(t)=\sum_{i=1}^{\infty} \sqrt{q_{i}}e_{i} \beta_{i}^{H}(t),\quad
t\in\mathbb{R},
\]
where $\{\beta_{i}^{H}(t)\}_{i\in{\mathbb{N}}}$ is a sequence of
stochastically independent one-dimensional fBm. \newline

In virtue of Kolmogorov's theorem we know that $B^{H}$ has a continuous
version, see \cite{Bauer} Theorem 39.3. Hence we can consider the canonical
interpretation of an fBm: let $C_{0}:=C_{0}(\mathbb{R},V)$ be the space of
continuous functions on $\mathbb{R}$ with values in $V$. Here and below the
subindex means that these functions are zero at zero, equipped with the
compact open topology. Let $\mathcal{F}=\mathcal{B} (C_{0}(\mathbb{R} ,V))$ be
the associated Borel-$\sigma$-algebra, ${\mathbb{P}}$ the distribution of the
fBm $B^{H}$ and $\{\theta_{t}\}_{t\in\mathbb{R}}$ the flow of Wiener shifts
given by (\ref{shift}). In that way, $(C_{0}(\mathbb{R} ,V),\mathcal{B}
(C_{0}(\mathbb{R} ,V)),\mathbb{P} ,\theta)$ is an ergodic metric dynamical
system, see \cite{MasSchm04} and \cite{GSch11}. The foundation of that
property can be found in \cite{Bauer} Theorem 38.6.

Furthermore, this (canonical) process has a version $\omega\in C^{\gamma}%
_{0}:=C^{\gamma}_{0}(\mathbb{R} ,V)$, that is, $\omega(0)=0$ and it is
$\gamma$-H{\"o}lder continuous on any interval $[-n,n]$ for all $\gamma<H$,
see \cite{Bauer}, Theorem 39.4. This regularity of the fractional Brownian
motion makes this process to be the main example fitting our abstract setting.

However, the space $C^{\gamma}_{0}$ is not suitable for our further purposes
since it is not separable, and we need a Polish space (see Lemma \ref{sep}
above). Nevertheless, in order to give a meaning to the stochastic integrals
we still need to consider H\"older continuous functions and the continuous
dependence of the integral with respect to the integrators in some subspace of
H\"older continuous functions. In fact, in what follows we consider a second
example consisting of a subspace of H\"older continuous functions that it is
separable.\newline

For $n\in\mathbb{N} $ and a general parameter $\gamma\in(0,1)$ we define
\[
C^{0,\gamma}([-n,n],V):=\overline{C^{\infty}([-n,n],V)}^{C^{\gamma}([-n,n],V)}%
\]
which is a closed linear subspace of $C^{\gamma}([-n,n],V)$ and in particular
a separable Banach sepace, see Friz and Victoir \cite{FrizVictoir},
Proposition 5.36. Let us denote by $C_{0}^{0,\gamma}:=C_{0}^{0,\gamma
}(\mathbb{R} ,V)$ the Fr\'echet space given by the subset of $C_{0}$ whose
elements restricted to $[-n,n]$ are in $C^{0,\gamma}([-n,n],V)$, so it is also
a separable complete metric space. The generating norms are the $\gamma
$-H{\"o}lder norms of functions on $[-n,n]$. According to the Wiener's
characterization, see \cite{FrizVictoir}, Theorem 5.31, the space
$C^{0,\gamma}([-n,n],V)$ can be also expressed as
\begin{align}
\label{Wc}%
\begin{split}
C^{0,\gamma}([-n,n],V)  &  =\bigg\{ x\in C^{\gamma}([-n,n],V): \lim
_{\delta\rightarrow0} \sup_{\substack{-n\leq s<t\leq n, \\t-s<\delta}}
\frac{\|x(t) -x(s)\|}{|t-s|^{\gamma}}=0\bigg\}\\
&  =\bigg\{ x\in C^{\gamma}([-n,n],V): \lim_{\delta\rightarrow0}
\sup_{\substack{-n\leq s<t\leq n, \\t-s<\delta}} \left|  \! \left|  \! \left|
x \right|  \! \right|  \! \right|  _{ \gamma,s,t}=0\bigg\}.
\end{split}
\end{align}

Next we define a new ergodic metric dynamical system modeling the fractional
Brownian motion where the set $\Omega=C_{0}^{0,\gamma}$ is separable. Needless
to say that this result has its own interest, since to our knowledge the path
space of the metric dynamical system modeling the fBm considered in the
literature has been given so far either by $C_{0}$ or $C_{0}^{\gamma}$.
\medskip

Consider the ergodic metric dynamical system $(C_{0},\mathcal{B}%
(C_{0}),\mathbb{P},\theta)$ where $\theta$ is the Wiener shift on $C_{0}$ and
$\mathbb{P}$ is the distribution of the fBm. We would like to prove that
defining $\mathbb{P}^{\prime}(A)=\mathbb{P}(B),\,A=B\cap C_{0}^{0,\gamma
},\,B\in\mathcal{B}(C_{0})$ and $\theta^{\prime}$ being the restriction of
$\theta$ to $\mathbb{R}\times\mathcal{B}(C_{0}^{0,\gamma})$, the quadruple
$(C_{0}^{0,\gamma},\mathcal{B}(C_{0}^{0,\gamma}),\mathbb{P}^{\prime}%
,\theta^{\prime})$ is also an ergodic metric dynamical system.\newline

We start establishing the connections between the two Borel $\sigma$-algebras
$\mathcal{B} (C_{0}^{0,\gamma})$ and $\mathcal{B} (C_{0})$.

\begin{lemma}
\label{le1} We have $\mathcal{B} (C_{0}^{0,\gamma})=\mathcal{B} (C_{0})\cap
C_{0}^{0,\gamma}$.
\end{lemma}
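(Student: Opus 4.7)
The strategy is to prove the two inclusions separately.

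\emph{Easy direction.} The inclusion $\mathcal{B}(C_{0})\cap C_{0}^{0,\gamma}\subset\mathcal{B}(C_{0}^{0,\gamma})$ follows at once from the continuity of the canonical embedding $i:C_{0}^{0,\gamma}\hookrightarrow C_{0}$: the Fr\'echet topology on $C_{0}^{0,\gamma}$ (generated by the H\"older norms $\|\cdot\|_{C^{\gamma}([-n,n],V)}$, $n\in\mathbb{N}$) is finer than the compact open topology on $C_{0}$ (generated by the sup norms on $[-n,n]$). Hence $i$ is Borel and $i^{-1}(B)=B\cap C_{0}^{0,\gamma}\in\mathcal{B}(C_{0}^{0,\gamma})$ for every $B\in\mathcal{B}(C_{0})$.

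\emph{Reverse inclusion.} Here I use that $C_{0}^{0,\gamma}$ is separable (as noted just before the lemma, with separability inherited from $C^{0,\gamma}([-n,n],V)$ via Friz--Victoir). In a separable metric space the Borel $\sigma$-algebra is generated by any countable subbasis of the topology; a concrete such subbasis is the countable family of sets
\[
\{x\in C_{0}^{0,\gamma}:\|x-x_{0}\|_{C^{\gamma}([-n,n],V)}<r\},\qquad n\in\mathbb{N},\; r\in\mathbb{Q}_{>0},\; x_{0}\in D,
\]
where $D$ is a fixed countable dense subset of $C_{0}^{0,\gamma}$. It is therefore enough to show that every such set lies in $\mathcal{B}(C_{0})\cap C_{0}^{0,\gamma}$, and for this it suffices to prove that the function $x\mapsto\|x-x_{0}\|_{C^{\gamma}([-n,n],V)}$ (extended to $C_{0}$ with possibly infinite values) is $\mathcal{B}(C_{0})$-measurable.

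\emph{Key step: reduction to countable suprema.} Since every $x\in C_{0}$ is continuous on $\mathbb{R}$,
\[
\|x-x_{0}\|_{C^{\gamma}([-n,n],V)}=\sup_{t\in[-n,n]\cap\mathbb{Q}}\|x(t)-x_{0}(t)\|+\sup_{\substack{s,t\in[-n,n]\cap\mathbb{Q}\\s<t}}\frac{\|(x-x_{0})(t)-(x-x_{0})(s)\|}{|t-s|^{\gamma}}.
\]
Each evaluation $x\mapsto x(t)$ is continuous on $C_{0}$ (with the compact open topology) and hence $\mathcal{B}(C_{0})$-measurable; a countable supremum of measurable functions is measurable, so the right hand side is $\mathcal{B}(C_{0})$-measurable. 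Restricting to $C_{0}^{0,\gamma}$, the preimage of $[0,r)$ under this map is precisely the subbasic set above and therefore lies in $\mathcal{B}(C_{0})\cap C_{0}^{0,\gamma}$, which finishes the argument.

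I do not foresee any serious obstacle: the two technical points are justifying the replacement of the suprema defining the H\"older seminorm by their restrictions to rationals (which relies on the continuity of every element of $C_{0}$) and invoking the separability of $C_{0}^{0,\gamma}$ to reduce its Borel $\sigma$-algebra to a countable generating family.
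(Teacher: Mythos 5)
Your proposal is correct, and while the overall skeleton matches the paper's (the easy inclusion via continuity of the embedding, then reduction of $\mathcal{B}(C_{0}^{0,\gamma})$ to a countable family of balls using separability), your treatment of the key step — showing each ball is the trace on $C_{0}^{0,\gamma}$ of a Borel set of $C_{0}$ — is genuinely different. The paper sandwiches the ball $V(\omega_{0};n,\varepsilon_{1},\varepsilon_{2})$ between sets $A_{\delta}$ in which the H\"older quotient is only taken over pairs with $t-s\geq\delta$; each such truncated seminorm is continuous for the sup norm, so $A_{\delta}$ is the trace of an open set of $C_{0}$, and the identity $V=\bigcap_{k}A_{1/k}$ is then established by invoking the Wiener characterization of the little H\"older space applied to $\omega-\omega_{0}$. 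You instead prove directly that the full $C^{\gamma}([-n,n],V)$-norm of $x-x_{0}$, extended to $C_{0}$ as a $[0,\infty]$-valued function, is $\mathcal{B}(C_{0})$-measurable, by writing it as a countable supremum over rational pairs of evaluation functionals (legitimate because every element of $C_{0}$ is continuous and the H\"older quotient is continuous on $\{s<t\}$); the ball is then the trace of a sublevel set. Your route is shorter and avoids the Wiener characterization entirely for this lemma; the paper's route gives the slightly stronger structural statement that each generating ball is a countable intersection of traces of \emph{open} sets of $C_{0}$, and its truncation device reappears anyway in the proof of Lemma \ref{l22} (where a rational-supremum argument very close to yours is also used). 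Both arguments are complete and correct.
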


\begin{proof}
Since $C_{0}^{0,\gamma}$ is continuously embedded in $C_{0}$, the inclusion
$\mathcal{B} (C_{0})\cap C_{0}^{0,\gamma}\subset\mathcal{B}(C_{0}^{0,\gamma})$
follows from Vishik and Fursikov \cite{VF} Theorem II.2.1, so we need to check
that $\mathcal{B}(C_{0}^{0,\gamma})\subset\mathcal{B} (C_{0})\cap
C_{0}^{0,\gamma}.$

A countable generator of $\mathcal{B}(C_{0}^{0,\gamma})$ consists of all open
sets of the form%
\[
V(\omega_{0};n,\varepsilon_{1},\varepsilon_{2})=\{\omega\in C_{0}^{0,\gamma
}:\left\Vert \omega-\omega_{0}\right\Vert _{\infty,-n,n}<\varepsilon
_{1},\ \left\vert \!\left\vert \!\left\vert \omega-\omega_{0}\right\vert
\!\right\vert \!\right\vert _{\gamma,-n,n}<\varepsilon_{2}\},
\]
where $\omega_{0}$ belongs to a countable set of $C_{0}^{0,\gamma}$,
$n\in\mathbb{N}$, $\varepsilon_{i}\in\mathbb{Q} ^{+}\setminus\{0\}$.

For any $\delta>0$ and $\varepsilon_{i}\in\mathbb{Q} ^{+}\setminus\{0\}$ we
define the sets%
\[
A_{\delta}(\omega_{0};n,\varepsilon_{1},\varepsilon_{2})=\left\{  \omega\in
C_{0}^{0,\gamma}:\left\Vert \omega-\omega_{0}\right\Vert _{\infty
,-n,n}<\varepsilon_{1},\sup_{\substack{s,t\in\lbrack-n,n], \\t-s\geq\delta
}}\frac{\| \omega(t)-\omega_{0}(t)-\omega(s)+\omega_{0}(s)\|}{(t-s)^{\gamma}%
}<\varepsilon_{2}\right\}  .
\]
It is straightforward to see that%
\[
A_{\delta_{1}}(\omega_{0};n,\varepsilon_{1},\varepsilon_{2})\subset
A_{\delta_{2}}(\omega_{0};n,\varepsilon_{1},\varepsilon_{2})\text{ if }%
\delta_{1}<\delta_{2}
\]
and%
\begin{equation}
V(\omega_{0};n,\varepsilon_{1},\varepsilon_{2})\subset A_{\delta}(\omega
_{0};n,\varepsilon_{1},\varepsilon_{2})\text{, }\forall\delta>0.
\label{VInclusion}%
\end{equation}
We will prove that%
\[
V(\omega_{0};n,\varepsilon_{1},\varepsilon_{2})=\bigcap_{k\in\mathbb{N}%
}A_{\frac{1}{k}}(\omega_{0};n,\varepsilon_{1},\varepsilon_{2}).
\]
In view of (\ref{VInclusion}) it is enough to check that%
\[
\bigcap_{k\in\mathbb{N}}A_{\frac{1}{k}}(\omega_{0};n,\varepsilon
_{1},\varepsilon_{2})\subset V(\omega_{0};n,\varepsilon_{1},\varepsilon_{2}).
\]
Let $\omega\in\bigcap_{k\in\mathbb{N}}A_{\frac{1}{k}}(\omega_{0}%
;n,\varepsilon_{1},\varepsilon_{2})$ be arbitrary. Since $\omega\in
C_{0}^{0,\gamma}$, by the Wiener's characterization (\ref{Wc}), there exists
$\delta(\varepsilon_{2})$ such that
\[
\sup_{\substack{s,t\in\lbrack-n,n], \\0<t-s<\delta(\varepsilon_{2})}}\frac{\|
\omega(t)-\omega_{0}(t)-\omega(s)+\omega_{0}(s)\|}{(t-s)^{\gamma}}%
<\varepsilon_{2}.
\]
In particular $\omega\in A_{\delta(\varepsilon_{2})}$, so we also have that%
\[
\sup_{\substack{s,t\in\lbrack-n,n], \\t-s\geq\delta(\varepsilon_{2})}}\frac{\|
\omega(t)-\omega_{0}(t)-\omega(s)+\omega_{0}(s)\|}{(t-s)^{\gamma}}%
<\varepsilon_{2}.
\]
Thus, $\omega\in V(\omega_{0};n,\varepsilon_{1},\varepsilon_{2}).$

For $\delta>0$ fixed, we observe that in the space $C_{0}([-n,n],V)$ the norm
$\left\Vert \text{\textperiodcentered}\right\Vert _{\delta,-n,n}$ given by%
\[
\left\Vert \omega\right\Vert _{\delta,-n,n}=\left\Vert \omega\right\Vert
_{\infty,-n,n}+\sup_{\substack{s,t\in\lbrack-n,n], \\t-s\geq\delta}%
}\frac{\|\omega(t)-\omega(s)\|}{(t-s)^{\gamma}}
\]
is equivalent to the standard norm $\left\Vert \omega\right\Vert
_{\infty,-n,n}$. Indeed,%
\[
\left\Vert \omega\right\Vert _{\infty,-n,n}\leq\left\Vert \omega\right\Vert
_{\delta,-n,n}\leq\left\Vert \omega\right\Vert _{\infty,-n,n}+2\frac
{\left\Vert \omega\right\Vert _{\infty,-n,n}}{\delta^{\gamma}}\leq\left(
1+\frac{2}{\delta^{\gamma}}\right)  \left\Vert \omega\right\Vert
_{\infty,-n,n}.
\]

This implies that the set
\[
\widetilde{A}_{\delta}(\omega_{0};n,\varepsilon_{1},\varepsilon_{2})=\left\{
\omega\in C_{0}:\left\Vert \omega-\omega_{0}\right\Vert _{\infty
,-n,n}<\varepsilon_{1},\sup_{\substack{s,t\in\lbrack-n,n], \\t-s\geq\delta
}}\frac{\|\omega(t)-\omega_{0}(t)-\omega(s)+\omega_{0}(s)\|}{(t-s)^{\gamma}%
}<\varepsilon_{2}\right\}
\]
is open in $C_{0}$. Hence,
\[
A_{\delta}(\omega_{0};n,\varepsilon_{1},\varepsilon_{2})=\widetilde{A}%
_{\delta}(\omega_{0};n,\varepsilon_{1},\varepsilon_{2})\cap C_{0}^{0,\gamma
}\in\mathcal{B}(C_{0})\cap C_{0}^{0,\gamma},
\]
so%
\[
V(\omega_{0};n,\varepsilon_{1},\varepsilon_{2})=\bigcap_{k\in\mathbb{N}%
}A_{\frac{1}{k}}(\omega_{0};n,\varepsilon_{1},\varepsilon_{2})\in
\mathcal{B}(C_{0})\cap C_{0}^{0,\gamma}.
\]

\end{proof}

\begin{lemma}
\label{l22} We have that $C_{0}^{0,\gamma}\in\mathcal{B} (C_{0})$.
\end{lemma}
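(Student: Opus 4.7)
The plan is to use Wiener's characterization (\ref{Wc}) to exhibit $C_{0}^{0,\gamma}$ as a Borel subset of $C_{0}$ described by countably many lower semicontinuous conditions. By the very definition of $C_{0}^{0,\gamma}$,
\[
C_{0}^{0,\gamma} = \bigcap_{n\in\mathbb{N}} E_{n}, \qquad E_{n} := \{\omega\in C_{0} : \omega|_{[-n,n]}\in C^{0,\gamma}([-n,n],V)\},
\]
so it is enough to show that $E_{n}\in\mathcal{B}(C_{0})$ for every $n\in\mathbb{N}$, since a countable intersection of Borel sets is Borel.

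For $\omega\in C_{0}$ and $\delta>0$ I would introduce the rational oscillation functional
\[
\Psi_{\delta}^{n}(\omega) := \sup_{\substack{s,t\in\mathbb{Q}\cap[-n,n] \\ 0<t-s<\delta}} \frac{\|\omega(t)-\omega(s)\|}{(t-s)^{\gamma}}.
\]
Since every $\omega\in C_{0}$ is continuous on $[-n,n]$ and the map $(s,t)\mapsto\|\omega(t)-\omega(s)\|/(t-s)^{\gamma}$ is continuous on $\{s<t\}$, a density argument shows that $\Psi_{\delta}^{n}(\omega)$ agrees with the corresponding supremum over all real pairs $(s,t)\in[-n,n]^{2}$ with $0<t-s<\delta$. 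Consequently, the Wiener characterization (\ref{Wc}) rephrases membership in $E_{n}$ as $\lim_{\delta\to 0^{+}}\Psi_{\delta}^{n}(\omega)=0$, and by the monotonicity of $\delta\mapsto\Psi_{\delta}^{n}(\omega)$ this is equivalent to the countable condition
\[
E_{n} = \bigcap_{k\in\mathbb{N}}\bigcup_{m\in\mathbb{N}} \bigl\{\omega\in C_{0} : \Psi_{1/m}^{n}(\omega)\le 1/k\bigr\}.
\]

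The key measurability step is that each $\Psi_{\delta}^{n}$ is Borel on $C_{0}$. In the compact-open topology on $C_{0}$ every point evaluation $\omega\mapsto\omega(s)$ is continuous, so for each fixed rational pair $(s,t)$ with $s<t$ the map $\omega\mapsto\|\omega(t)-\omega(s)\|/(t-s)^{\gamma}$ is continuous on $C_{0}$. Being a \emph{countable} supremum of continuous functions, $\Psi_{\delta}^{n}$ is lower semicontinuous on $C_{0}$, whence each set $\{\omega : \Psi_{1/m}^{n}(\omega)\le 1/k\}$ is closed in $C_{0}$. This exhibits $E_{n}$ as an $F_{\sigma\delta}$ subset of $C_{0}$, hence Borel, and therefore $C_{0}^{0,\gamma}=\bigcap_{n}E_{n}\in\mathcal{B}(C_{0})$. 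I expect the only delicate point to be the reduction from real to rational suprema, since the inequality $t-s<\delta$ is strict: this is handled by approximating any admissible real pair $(s,t)$ from within by rational pairs with a strictly smaller increment, using the continuity of $\omega$ to preserve the Hölder quotient in the limit.
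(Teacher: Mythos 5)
Your proof is correct, and while it follows the same overall strategy as the paper (reduce to countable suprema over rational parameters, invoke Wiener's characterization (\ref{Wc}), intersect over $n$), the implementation is genuinely different and more streamlined. The paper works with a two-layer construction: it first introduces the possibly infinite H\"older seminorm $f_{a,b}$ on subintervals, proves its measurability as a monotone pointwise limit of the continuous functionals $f_{a,b,k}$ obtained by restricting to pairs with $t-s\geq 1/k$, then takes a second supremum $g_{k,n}$ of $f_{s,t}$ over rational subintervals of length $<1/k$ and identifies it with the supremum over all real subintervals (approximating intervals from outside and using monotonicity of $f$ under inclusion), finally setting $A_n=h_n^{-1}(\{0\})$. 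You collapse this to a single countable supremum of the continuous evaluation maps $\omega\mapsto\|\omega(t)-\omega(s)\|/(t-s)^{\gamma}$ over rational point pairs, so $\Psi^n_{\delta}$ is lower semicontinuous into $[0,\infty]$ for free, its sublevel sets are closed, and no separate convergence argument is needed; your rational-to-real reduction instead uses continuity of $\omega$ and the strictness of $t-s<\delta$, which you correctly flag as the delicate point. Your route buys a sharper conclusion --- $C_0^{0,\gamma}$ is exhibited explicitly as an $F_{\sigma\delta}$ subset of $C_0$ --- at the cost of nothing. One point that both you and the paper leave implicit, and that is worth a sentence: the characterization (\ref{Wc}) requires $x\in C^{\gamma}([-n,n],V)$ in addition to the vanishing-oscillation condition, but for a continuous $\omega$ on the compact interval $[-n,n]$ the condition $\lim_{\delta\to 0^+}\Psi^n_{\delta}(\omega)=0$ already forces finiteness of the full H\"older seminorm, since pairs with $t-s\geq\delta_0$ contribute at most $2\|\omega\|_{\infty,-n,n}\,\delta_0^{-\gamma}$; hence your set identity for $E_n$ is exact.
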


\begin{proof}
First of all, for $a<b$, for $\omega\in C_{0}$ we define the following
mapping:
\[
f_{a,b}(\omega)=\left\{
\begin{array}
[c]{lcl}%
\left\vert \!\left\vert \!\left\vert \omega\right\vert \!\right\vert
\!\right\vert _{\gamma,a,b} & : & \mathrm{if}\ \omega\in C^{\gamma
}([a,b],V),\\
\infty & : & \mathrm{if}\ \omega\in C([a,b],V)\backslash C^{\gamma}([a,b],V).
\end{array}
\right.
\]

Then the mapping $f_{a,b}:(C_{0},\mathcal{B}(C_{0}))\rightarrow(\bar
{\mathbb{R}}^{+},\mathcal{B}(\bar{\mathbb{R}}^{+}))$ is measurable. In order
to prove this statement, we consider
\[
f_{a,b,k}(\omega)=\sup_{\substack{a\leq s<t\leq b,\\t-s\geq\frac{1}{k}}%
}\frac{\Vert\omega(t)-\omega(s)\Vert}{|t-s|^{\gamma}},
\]
which is a continuous mapping on $C_{0}$ with values in $\mathbb{R}^{+}$. If
we prove that the following property
\begin{equation}
\lim_{k\rightarrow\infty}f_{a,b,k}(\omega)=f_{a,b}(\omega). \label{eq1b}%
\end{equation}
holds true, then the measurability of $f_{a,b}$ follows, since the pointwise
limit of measurable functions is also a measurable function.

Note that the sequence $(f_{a,b,k})_{k\in\mathbb{N} }$ is non decreasing, so
that there exists its limit in $\bar{\mathbb{R}}^{+}$ and this limit is
smaller than or equal to $f_{a,b}(\omega)$ for every $\omega\in C_{0}$. On the
other hand, by the definition of supremum, there exists a sequence $(s_{n},
t_{n})_{n\in\mathbb{N} }$ with $a\le s_{n}<t_{n}\le b$ such that
\[
\lim_{n\to\infty}\frac{\|\omega(t_{n})-\omega(s_{n})\|}{|t_{n}-s_{n}|^{\gamma
}}=f_{a,b}(\omega).
\]

We can select an increasing subsequence $(k_{n^{\prime}})$ such that
$1/{k_{n^{\prime}}}\le t_{n}-s_{n}$ and hence
\[
\frac{\|\omega(t_{n})-\omega(s_{n})\|}{|t_{n}-s_{n}|^{\gamma}}\le
f_{a,b,k_{n^{\prime}}}(\omega)\le f_{a,b}(\omega)
\]
and the left hand side converges to $f_{a,b}(\omega)$, which shows \eqref{eq1b}.

Now, for $\omega\in C_{0}$ consider the mapping
\[
g_{k,n}(\omega)=\sup_{\substack{-n\leq s<t\leq n,\\s,\,t\in\mathbb{Q}%
,\\t-s<\frac{1}{k}}}f_{s,t}(\omega).
\]
Then the mapping $g_{k,n}$ is $(C_{0},\mathcal{B}(C_{0})),(\bar{\mathbb{R}%
}^{+},\mathcal{B}(\bar{\mathbb{R}}^{+}))$-measurable, which follows since this
supremum is taken over countably many measurable elements. In addition, we can
prove that
\begin{equation}
g_{k,n}(\omega)=\sup_{\substack{-n\leq s<t\leq n,\\t-s<\frac{1}{k}}%
}f_{s,t}(\omega). \label{gm}%
\end{equation}
Straightforwardly, the right hand side of (\ref{gm}) is larger than or equal
to the left hand side. Conversely, for fixed $n$, let $(s_{m}^{n},t_{m}^{n})$
with $-n\leq s_{m}^{n}<t_{m}^{n}\leq n$ and $t_{m}^{n}-s_{m}^{n}<1/k$ such
that
\[
\lim_{m\rightarrow\infty}f_{s_{m}^{n},t_{m}^{n}}(\omega)=\sup
_{\substack{-n\leq s<t\leq n,\\t-s<\frac{1}{k}}}f_{s,t}(\omega).
\]
Then we find $\bar{s}_{m}^{n},\,\bar{t}_{m}^{n}\in\mathbb{Q}$ such that
$-n\leq\bar{s}_{m}^{n}\leq s_{m}^{n}<t_{m}^{n}\leq\bar{t}_{m}^{n}\leq n$,
$\bar{t}_{m}^{n}-\bar{s}_{m}^{n}<1/k$. Hence
\[
f_{s_{m}^{n},t_{m}^{n}}(\omega)\leq f_{\bar{s}_{m}^{n},\bar{t}_{m}^{n}}%
(\omega)
\]
which gives the opposite inequality in (\ref{gm}). Note that if $s_{m}^{n}=-n$
we can set $\bar{s}_{m}^{n}=-n\in\mathbb{Q}$ and similarly for $t_{m}^{n}=n$.
We finally define the mapping
\[
h_{n}(\omega)=\limsup_{k\rightarrow\infty}g_{k,n}(\omega)\in\bar{\mathbb{R}%
}^{+}%
\]
which is measurable in $\mathcal{B}(C_{0})$. We stress that if $h_{n}%
(\omega)=0$ then we indeed have
\[
0=\lim_{k\rightarrow\infty}g_{k,n}(\omega)\in\bar{\mathbb{R}}^{+}.
\]
Hence
\[
A_{n}:=h_{n}^{-1}(\{0\})\in\mathcal{B}(C_{0})
\]
and then, by the Wiener's characterization and (\ref{gm}), we finally obtain
\[
A_{n}=\{\omega\in C_{0}:\omega|_{[-n,n]}\in C^{0,\gamma}([-n,n],V)\}
\]
that implies
\[
C_{0}^{0,\gamma}=\bigcap_{n}A_{n}\in\mathcal{B}(C_{0}).
\]

\end{proof}

Before proving the main result of this section, let us remind here several
properties that are necessary.

\begin{definition}
\label{der} Given a metric dynamical system $(\Omega, \mathcal{F}%
,\mathbb{P},\theta)$, $A$ is invariant mod $\mathbb{P}$ if $\mathbb{P}%
(A\Delta\theta_{t} A)=0,$ for every $t\in\mathbb{R} $.

\end{definition}

The following result can be found in Walters \cite{W}, Theorem 1.5:

\begin{lemma}
\label{ler} The metric dynamical system $(\Omega, \mathcal{F},\mathbb{P}%
,\theta)$ is ergodic if and only if every $\theta$-invariant set mod
$\mathbb{P}$ has measure zero or one.
\end{lemma}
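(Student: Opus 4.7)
The plan is to handle the two implications separately. The reverse implication $(\Leftarrow)$ is immediate: a strictly $\theta$-invariant set $A$ (so that $\theta_t A = A$ for every $t \in \mathbb{R}$) automatically satisfies $A \Delta \theta_t A = \emptyset$, hence is invariant mod $\mathbb{P}$; the assumption then forces $\mathbb{P}(A) \in \{0,1\}$, which is exactly the paper's definition of ergodicity.

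The content lies in the forward implication. Given ergodicity and $A \in \mathcal{F}$ with $\mathbb{P}(A \Delta \theta_t A) = 0$ for every $t$, I want to conclude $\mathbb{P}(A) \in \{0,1\}$. My strategy is to \emph{replace} $A$ by a genuinely (pointwise) $\theta$-invariant set $B$ satisfying $\mathbb{P}(A \Delta B) = 0$; ergodicity applied to $B$ then gives $\mathbb{P}(A) = \mathbb{P}(B) \in \{0,1\}$.

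To manufacture $B$ I will use the Ces\`aro time average
\[
f(\omega) := \limsup_{T \to \infty} \frac{1}{T} \int_0^T \mathbf{1}_A(\theta_s \omega)\, ds,
\]
and set $B := \{\omega : f(\omega) = 1\}$. The $\mathcal{F}$-measurability of $f$, and hence of $B$, follows from the joint $\mathcal{B}(\mathbb{R}) \otimes \mathcal{F}$-$\mathcal{F}$ measurability of $\theta$ built into the metric dynamical system axioms. Two facts then need to be checked. First, pointwise strict invariance $f(\theta_r \omega) = f(\omega)$ for every $r \in \mathbb{R}$ and every $\omega$: a change of variables rewrites $f(\theta_r \omega)$ as the $\limsup$ of $T^{-1} \int_r^{T+r} \mathbf{1}_A(\theta_u \omega)\, du$, and the two boundary pieces of length $|r|$ contribute at most $|r|/T$, which vanishes in the limit. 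Second, $f = \mathbf{1}_A$ $\mathbb{P}$-almost surely: applying Fubini to the non-negative measurable function $(s,\omega) \mapsto \mathbf{1}_{A \Delta \theta_{-s} A}(\omega) = |\mathbf{1}_A(\omega) - \mathbf{1}_A(\theta_s \omega)|$ over $[0,T] \times \Omega$, each $s$-slice has zero $\mathbb{P}$-integral by hypothesis, so for $\mathbb{P}$-a.e.\ $\omega$ one has $\mathbf{1}_A(\theta_s \omega) = \mathbf{1}_A(\omega)$ for Lebesgue-a.e.\ $s$; for such $\omega$ the time average is identically $\mathbf{1}_A(\omega)$, giving $\{f = 1\} = A$ up to a $\mathbb{P}$-null set.

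The main obstacle I anticipate is the Fubini step, which depends crucially on joint measurability of the flow; this is granted by the metric dynamical system setup. The translation estimate for pointwise invariance of $f$ is routine, and once $B$ is in hand, ergodicity applied to the strictly invariant $B$ closes the argument.
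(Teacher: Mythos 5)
Your proof is correct, but it is worth noting that the paper does not actually prove this lemma at all: it simply cites Walters \cite{W}, Theorem 1.5, which is stated and proved there for a single measure-preserving transformation (discrete time), whereas the setting here is a measurable flow $\theta_t$, $t\in\mathbb{R}$. Your argument supplies a self-contained proof directly adapted to the continuous-time case, which is arguably more than the paper offers. The easy implication is handled exactly as one would expect. For the substantive implication, your Ces\`aro-average construction $f(\omega)=\limsup_{T\to\infty}T^{-1}\int_0^T\mathbf{1}_A(\theta_s\omega)\,ds$, $B=\{f=1\}$, is a standard and sound replacement for the discrete-time construction $\bigcap_N\bigcup_{n\ge N}T^{-n}A$ used in Walters: the translation estimate gives strict invariance of $B$ (using that $\theta$ is a group, as the paper assumes), and the Fubini step correctly exploits the joint measurability of the flow to show $f=\mathbf{1}_A$ a.s. Two small points you gloss over but which are routine: the measurability of the $\limsup$ over the uncountable parameter $T$ requires observing that $T\mapsto T^{-1}\int_0^T\mathbf{1}_A(\theta_s\omega)\,ds$ is continuous, so one may restrict to integer $T$; and the Fubini argument produces, for each fixed $T$, a null set depending on $T$, so one must take a countable union over $T=n\to\infty$ before passing to the limit. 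Neither affects the validity of the argument.
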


Now we can prove the main theorem of this section.

\begin{theorem}
\label{mt} The quadruple $(C_{0}^{0,\gamma},\mathcal{B} (C_{0}^{0,\gamma
}),\mathbb{P} ^{\prime},\theta^{\prime})$ is an ergodic metric dynamical
system where
\[
\mathbb{P} ^{\prime}(A)=\mathbb{P} (B),\,A=B\cap C_{0}^{0,\gamma},
\,B\in\mathcal{B} (C_{0})
\]
and $\theta^{\prime}$ is the restriction of $\theta$ to $\mathbb{R}
\times\mathcal{B} (C_{0}^{0,\gamma})$.
\end{theorem}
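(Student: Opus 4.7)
The plan is to transfer every piece of the ambient ergodic metric dynamical system $(C_0,\mathcal{B}(C_0),\mathbb{P},\theta)$ to its restriction, using Lemmas \ref{le1} and \ref{l22} to handle the trace $\sigma$-algebra and a full-measure argument to dispose of null-set ambiguities.

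First I would argue that $\mathbb{P}'$ is a well-defined probability measure on $\mathcal{B}(C_0^{0,\gamma})$. By Lemma \ref{l22}, $C_0^{0,\gamma}\in\mathcal{B}(C_0)$, so $\mathbb{P}(C_0^{0,\gamma})$ is meaningful. Since $\gamma<H$, I would choose some $\gamma'\in(\gamma,H)$: almost every path of $B^H$ lies in $C^{\gamma'}([-n,n],V)$ for every $n$, and the elementary estimate $\|x(t)-x(s)\|/|t-s|^{\gamma}\leq\|x\|_{\gamma',-n,n}\,|t-s|^{\gamma'-\gamma}$ together with Wiener's characterization (\ref{Wc}) yields $\mathbb{P}(C_0^{0,\gamma})=1$. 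By Lemma \ref{le1}, every $A\in\mathcal{B}(C_0^{0,\gamma})$ can be written as $A=B\cap C_0^{0,\gamma}$ with $B\in\mathcal{B}(C_0)$; two such representatives differ by a subset of $C_0\setminus C_0^{0,\gamma}$, which is $\mathbb{P}$-null, so $\mathbb{P}'(A):=\mathbb{P}(B)$ is unambiguous and $\sigma$-additive.

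Next I would check that $\theta'$ is a measurable flow on $C_0^{0,\gamma}$. The formula $\theta_t\omega(s)=\omega(t+s)-\omega(t)$ preserves the value $0$ at $s=0$; that $\theta_t$ sends $C_0^{0,\gamma}$ into itself follows from Wiener's characterization applied on $[-m,m]$ with $m\geq n+|t|$, since the vanishing local modulus of $\omega$ on $[-m,m]$ transfers to $\theta_t\omega$ on $[-n,n]$. Measurability of $\theta':\mathbb{R}\times C_0^{0,\gamma}\to C_0^{0,\gamma}$ then follows from that of $\theta$ on $C_0$ together with Lemma \ref{le1}. Invariance of $\mathbb{P}'$ is immediate: writing $A=B\cap C_0^{0,\gamma}$, one has $\theta_t'A=\theta_tB\cap C_0^{0,\gamma}$, so
\[
\mathbb{P}'(\theta_t'A)=\mathbb{P}(\theta_tB)=\mathbb{P}(B)=\mathbb{P}'(A).
\]

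Finally, for ergodicity I would invoke Lemma \ref{ler} and show that every $\theta'$-invariant set mod $\mathbb{P}'$ has $\mathbb{P}'$-measure $0$ or $1$. If $A=B\cap C_0^{0,\gamma}$ is such a set, then using $\mathbb{P}(C_0^{0,\gamma})=1$ one computes
\[
\mathbb{P}(B\Delta\theta_tB)=\mathbb{P}((B\Delta\theta_tB)\cap C_0^{0,\gamma})=\mathbb{P}'(A\Delta\theta_t'A)=0
\]
for every $t\in\mathbb{R}$, so $B$ is $\theta$-invariant mod $\mathbb{P}$; the ergodicity of the ambient system forces $\mathbb{P}(B)\in\{0,1\}$ and hence $\mathbb{P}'(A)\in\{0,1\}$. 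The main obstacle is the very first step, namely the verification that $\mathbb{P}(C_0^{0,\gamma})=1$, which relies on both Wiener's characterization and the strictly-better-than-$\gamma$ H\"older regularity of the fBm paths; once this full-measure property is in place, every other item in the scheme is a routine transfer along the trace $\sigma$-algebra.
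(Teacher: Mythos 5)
Your proposal is correct and follows essentially the same route as the paper's proof: establishing $\mathbb{P}(C_{0}^{0,\gamma})=1$ via the strictly better H\"older exponent $\gamma'<H$ and the Wiener characterization \eqref{Wc}, resolving the ambiguity of representatives through symmetric differences contained in the null complement, and transferring invariance and ergodicity along the trace $\sigma$-algebra using Lemmas \ref{le1}, \ref{l22} and \ref{ler}. The only cosmetic difference is that you verify $\theta_{t}C_{0}^{0,\gamma}\subset C_{0}^{0,\gamma}$ directly from the vanishing local H\"older modulus, whereas the paper shifts a smooth approximating sequence; these are equivalent by \eqref{Wc}.
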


\begin{proof}
Notice that $\mathbb{P}(C_{0}^{0,\gamma})=1$, which follows from the property
that the fBm has paths in $C_{0}^{0,\gamma}$. In fact, from \cite{Bauer}
Theorem 39.4 and \cite{Kunita90} Theorem 1.4.1, we know that $\omega\in
C_{0}^{\gamma^{\prime}}$ for any $\gamma<\gamma^{\prime}<H$. Hence, for every
$n\in\mathbb{N}$ in particular $\omega\in C^{\gamma}([-n,n],V)$ and
\begin{align*}
\lim_{\delta\rightarrow0}\sup_{\substack{-n\leq s<t\leq n,\\t-s<\delta}%
}\frac{\Vert\omega(t)-\omega(s)\Vert}{|t-s|^{\gamma}}  &  =\lim_{\delta
\rightarrow0}\sup_{\substack{-n\leq s<t\leq n,\\t-s<\delta}}\frac{\Vert
\omega(t)-\omega(s)\Vert}{|t-s|^{\gamma^{\prime}}}|t-s|^{\gamma^{\prime
}-\gamma}\\
&  \leq\lim_{\delta\rightarrow0}\sup_{\substack{-n\leq s<t\leq n}}\frac
{\Vert\omega(t)-\omega(s)\Vert}{|t-s|^{\gamma^{\prime}}}\delta^{\gamma
^{\prime}-\gamma}\\
&  \leq\left\vert \!\left\vert \!\left\vert \omega\right\vert \!\right\vert
\!\right\vert _{\gamma^{\prime},-n,n}\lim_{\delta\rightarrow0}\delta
^{\gamma^{\prime}-\gamma}=0.
\end{align*}

Furthermore, by Lemma \ref{le1} and Lemma \ref{l22}, $\mathbb{P}^{\prime}$ is
defined on $\mathcal{B}(C_{0}^{0,\gamma})$. To check that $\mathbb{P}^{\prime
}$ is well--defined, let $A\in\mathcal{B}(C_{0}^{0,\gamma})$ be such that for
$B_{1},\,B_{2}\in\mathcal{B}(C_{0})$ we have
\[
A=B_{i}\cap C_{0}^{0,\gamma},\quad i=1,\,2.
\]
It is easy to check that
\[
(B_{1}\cap C_{0}^{0,\gamma})\Delta(B_{2}\cap C_{0}^{0,\gamma})=C_{0}%
^{0,\gamma}\cap(B_{1}\Delta B_{2}),
\]
thus, because the symmetric difference of a set with itself is the empty set
and $\mathbb{P}(C_{0}^{0,\gamma})=1$, we have
\[
0=\mathbb{P}((B_{1}\cap C_{0}^{0,\gamma})\Delta(B_{2}\cap C_{0}^{0,\gamma
}))=\mathbb{P}(B_{1}\Delta B_{2}),
\]
so that $\mathbb{P}(B_{1})=\mathbb{P}(B_{2})$ and therefore $\mathbb{P}%
^{\prime}(A)=\mathbb{P}(B_{i})$. This property, together with the $\sigma
$-additivity and the fact that trivially $\mathbb{P}^{\prime}(C_{0}^{0,\gamma
})=1$, implies that $\mathbb{P}^{\prime}$ is a probability measure.\newline

We now prove that $\theta_{t}^{\prime}$ has values in $C_{0}^{0,\gamma}$ for
$t\in\mathbb{R}$. Suppose that $\omega\in C_{0}^{0,\gamma}$. Then for any
$n\in\mathbb{N}$ there exists a sequence $(\omega_{m}^{n})_{m\in\mathbb{N}}$
converging to $\omega$ in $C^{\gamma}([-n,n],V)$ where $\omega_{m}^{n}\in
C^{\infty}([-n,n],V),\,\omega_{m}^{n}(0)=0$. For some $t\in\mathbb{R}$ we
consider the sequence $(\omega_{m}^{[t]+1+n}),$ which converges to $\omega$ on
$C^{\gamma}([-[t]-1-n,[t]+1+n],V)$, where for $t\geq0$ the value $[t]$ is the
largest integer less or equal than $t$ and for $t<0$ the value $[t]$ is the
smallest integer larger or equal than $t$. Then $(\theta_{t}\omega
_{m}^{[t]+1+n}|_{[-n,n]})_{m\in\mathbb{N}}$ converges to $\theta_{t}\omega$ in
$C^{\gamma}([-n,n],V)$, so that $\theta_{t}\omega\in C_{0}^{0,\gamma}$, and
hence $\theta_{t}^{\prime}C_{0}^{0,\gamma}\subset C_{0}^{0,\gamma}$, for
$t\in\mathbb{R}$. Since $\theta_{t}$ is a bijection with inverse $\theta_{-t}%
$, we also obtain that
\begin{equation}
\theta_{t}C_{0}^{0,\gamma}=\theta_{t}^{\prime}C_{0}^{0,\gamma}=C_{0}%
^{0,\gamma}\quad\text{for all }t\in\mathbb{R}.\label{bi}%
\end{equation}
The flow property of $\theta^{\prime}$ follows easily form the flow property
of $\theta$.

We prove now that $\theta^{\prime}$ is $\mathcal{B} (\mathbb{R} )\otimes
\mathcal{B} (C_{0}^{0,\gamma}),\mathcal{B} (C_{0}^{0,\gamma})$ measurable.
Note that for $A\in\mathcal{B} (C_{0}^{0,\gamma})$, as a consequence of
(\ref{bi}),
\begin{equation}
\label{eqeq}(\theta^{\prime})^{-1}(A) = (\theta^{\prime})^{-1}(B\cap
C_{0}^{0,\gamma})= \theta^{-1}(B)\cap\theta^{-1}(C_{0}^{0,\gamma}%
)\in(\mathcal{B} (\mathbb{R} )\otimes\mathcal{B} (C_{0})) \cap(\mathbb{R}
\times C_{0}^{0,\gamma} ).
\end{equation}
Indeed the second equality follows by
\begin{align*}
\theta^{-1}(B\cap C_{0}^{0,\gamma})  &  =\{(t,\omega)\in\mathbb{R} \times
C_{0}:\theta_{t}\omega\in B\cap C_{0}^{0,\gamma}\} .
\end{align*}
but when $\theta_{t}\omega\in C_{0}^{0,\gamma}$ so $\omega\in C_{0}^{0,\gamma
}$. Applying Vishik and Fursikov \cite{VF} Theorem II.2.1 to \eqref{eqeq}, we
obtain that
\[
(\theta^{\prime})^{-1}(A) \in\mathcal{B} (\mathbb{R} \times C_{0}^{0,\gamma
})=\mathcal{B} (\mathbb{R} )\otimes\mathcal{B} (C_{0}^{0,\gamma}).
\]
Note that the last equality above follows by the separability of $\mathbb{R} $
and $C^{0,\gamma}_{0}$, see \cite{Bauer}, Chapter 35, Remark 1.

Next we prove the invariance of the measure $\mathbb{P} ^{\prime}$, which is
derived from the invariance of the measure $\mathbb{P} $ and the following
chain of equalities: for all $A\in\mathcal{B} (C_{0}^{0,\gamma})$,
\begin{align*}
\mathbb{P} ^{\prime}(\theta_{t}^{\prime-1}A)=  &  \mathbb{P} ^{\prime}%
(\theta_{t}^{\prime-1}(B\cap C_{0}^{0,\gamma}))=\mathbb{P} ^{\prime}%
(\theta_{t}^{-1}(B\cap C_{0}^{0,\gamma}))=\mathbb{P} ^{\prime}(\theta_{t}%
^{-1}B\cap C_{0}^{0,\gamma})=\mathbb{P} (\theta_{t}^{-1}B)\\
=  &  \mathbb{P} (B)=\mathbb{P} ^{\prime}(A).
\end{align*}

Finally, we prove the ergodicity of $\mathbb{P}^{\prime}$. Let $A\in
\mathcal{B}(C_{0}^{0,\gamma})$ be a $\theta^{\prime}$ invariant set. Then for
any $B\in\mathcal{B}(C_{0})$ such that $A=B\cap C_{0}^{0,\gamma}$, since
$\mathbb{P}(C_{0}^{0,\gamma})=1$,
\begin{align*}
\mathbb{P}((\theta_{t}^{-1}B)\Delta B)=  &  \mathbb{P}^{\prime}(C_{0}%
^{0,\gamma}\cap((\theta_{t}^{-1}B)\Delta B))=\mathbb{P}^{\prime}((\theta
_{t}^{-1}(B\cap C_{0}^{0,\gamma}))\Delta(B\cap C_{0}^{0,\gamma}))\\
=  &  \mathbb{P}^{\prime}((\theta_{t}^{\prime-1}(B\cap C_{0}^{0,\gamma
}))\Delta(B\cap C_{0}^{0,\gamma}))=\mathbb{P}^{\prime}((\theta_{t}^{\prime
-1}A)\Delta A)=\mathbb{P}(\emptyset)=0,
\end{align*}
By Definition \ref{der} we conclude that $B$ is $\theta$ invariant mod
$\mathbb{P}$, hence the ergodicity of $\mathbb{P}$ implies that either
$\mathbb{P}(B)=0$ or 1, see Lemma \ref{ler}. As a result, taking once more
into account that $\mathbb{P}(C_{0}^{0,\gamma})=1$ we conclude that either
$\mathbb{P}^{\prime}(A)=0$ or 1, and therefore $\mathbb{P}^{\prime}$ is ergodic.
\end{proof}

\section{Multivalued non-autonomous and random dynamical systems for
(\ref{P})}

\label{mnds}

In this section we deal with the multivalued dynamical system related to
problem (\ref{P}). First of all, we emphasize that Lemma \ref{Concatenation}
implies in particular that every mild pathwise solution can be extended to a
globally defined one, that is, it exists for any $t\geq0$.\newline

Denote by $\mathcal{F}(u_{0},\omega)$ the set of all globally defined mild
solutions with initial value $u_{0}\in V$ for $\omega\in C_{0}^{0,\beta
^{\prime}}$ and consider the ergodic metric dynamical system $(C_{0}%
^{0,\beta^{\prime}},\mathcal{B} (C_{0}^{0,\beta^{\prime}}),\mathbb{P}
^{\prime},\theta^{\prime})$ introduced in Theorem \ref{mt} for $\gamma
=\beta^{\prime}$.

Denoting by $P\left(  V\right)  $ the set of all non-empty subsets of $V$, we
define the (possibly) multivalued operator $\Phi:\mathbb{R}^{+}\times
\Omega\times V\rightarrow P\left(  V\right)  $ by%
\[
\Phi(t,\omega,u_{0})=\{u(t):u\in\mathcal{F}(u_{0},\omega)\}.
\]

\begin{lemma}
\label{Cocycle}$\Phi(t+s,\omega,u_{0})=\Phi(t,\theta_{s}\omega,\Phi
(s,\omega,u_{0}))$, for all $t,s\geq0,\ \omega\in\Omega$, $u_{0}\in V$. That
is, $\Phi$ is a strict MRDS.
\end{lemma}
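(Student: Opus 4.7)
The plan is to prove both inclusions separately, relying on the Concatenation Lemma (Lemma~\ref{Concatenation}) and the Translation Lemma (Lemma~\ref{Translation}) established in the previous section. These two lemmas are precisely the tools needed to convert, respectively, between globally defined solutions on $[0,t+s]$ and pairs of solutions on $[0,s]$ and $[0,t]$ driven by $\omega$ and $\theta_{s}\omega$.

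For the inclusion $\Phi(t+s,\omega,u_{0})\subset\Phi(t,\theta_{s}\omega,\Phi(s,\omega,u_{0}))$, I would take an arbitrary $y\in\Phi(t+s,\omega,u_{0})$ so that $y=u(t+s)$ for some $u\in\mathcal{F}(u_{0},\omega)$. Set $x:=u(s)$; then by definition $x\in\Phi(s,\omega,u_{0})$. Applying Lemma~\ref{Translation} to $u$ on the interval $[0,t+s]$, the function $v(\cdot):=u(\cdot+s)$ is a mild solution on $[0,t]$ with $v(0)=x$ and driving path $\theta_{s}\omega$. Because $u$ is globally defined, iterating the translation shows that $v$ extends to a globally defined mild solution, hence $v\in\mathcal{F}(x,\theta_{s}\omega)$. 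Therefore $y=u(t+s)=v(t)\in\Phi(t,\theta_{s}\omega,x)\subset\Phi(t,\theta_{s}\omega,\Phi(s,\omega,u_{0}))$.

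For the reverse inclusion, take $y\in\Phi(t,\theta_{s}\omega,\Phi(s,\omega,u_{0}))$. Then there exist $x\in\Phi(s,\omega,u_{0})$ and $v\in\mathcal{F}(x,\theta_{s}\omega)$ with $v(t)=y$; and there exists $u_{1}\in\mathcal{F}(u_{0},\omega)$ with $u_{1}(s)=x$. Apply Lemma~\ref{Concatenation} with $T_{1}=s$ and $T_{2}=t$, to the pair $u_{1}\in C_{\beta}^{\beta}([0,s],V)$ and $v\in C_{\beta}^{\beta}([0,t],V)$ (noting the hypothesis $v(0)=u_{1}(s)$ is satisfied): the concatenation
\[
w(r)=\begin{cases} u_{1}(r), & r\in[0,s],\\ v(r-s), & r\in[s,s+t],\end{cases}
\]
is a mild solution to (\ref{P}) on $[0,s+t]$ for the path $\omega$, with $w(0)=u_{0}$. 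Extending $w$ by $u_{1}$ to all times beyond $s+t$ where $u_{1}$ is defined, and continuing by concatenation, one obtains a globally defined mild solution, so $w\in\mathcal{F}(u_{0},\omega)$. Since $w(s+t)=v(t)=y$, we conclude $y\in\Phi(s+t,\omega,u_{0})$, and thus equality holds.

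The only subtlety — which is not really an obstacle, but worth articulating carefully — is the book-keeping for \emph{global} definedness on $[0,\infty)$: both lemmas are stated on bounded intervals, so one has to confirm that the translated/concatenated objects can be extended to all of $\mathbb{R}^{+}$. This follows by iterating the concatenation procedure indefinitely (as already noted after Lemma~\ref{Concatenation}), which produces the required globally defined mild solutions at each step. Finally, that $\Phi$ is an MRDS follows because the metric dynamical system constructed in Theorem~\ref{mt} is an ergodic metric dynamical system on the separable Polish space $C_{0}^{0,\beta'}$; strictness of the cocycle is exactly the equality just established.
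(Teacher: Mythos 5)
Your proof is correct and follows exactly the route the paper intends: the paper's own ``proof'' is the single sentence that the result follows from Lemmas \ref{Concatenation} and \ref{Translation}, and you have simply filled in the two inclusions (Translation for $\subset$, Concatenation for $\supset$) together with the global-extension book-keeping. No discrepancy to report.
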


The proof follows easily from Lemmas \ref{Concatenation} and \ref{Translation}%
.\newline

Next we would like to prove that the norm of any solution $u \in
\mathcal{F}(u_{0},\omega)$ is uniformly bounded in any interval $[0,T]$.

\begin{lemma}
\label{BoundSolutions2}Let us consider the balls $B_{V}(0,R)$ and
$B_{C_{0}^{0,\beta^{\prime}}}(0,\hat R)$. There exists $C(R,\hat R,T)$ such
that for any $u_{0} \in B_{V}(0,R)$, $\omega\in B_{C_{0}^{0,\beta^{\prime}}%
}(0,\hat R)$ and $u\in\mathcal{F} (u_{0},\omega)$ one has%
\[
\left\Vert u\right\Vert _{\beta,\beta,0,T}\leq C(R,\hat R,T).
\]

\end{lemma}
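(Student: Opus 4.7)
The statement is an a priori bound on any mild solution, and the natural strategy is to plug $u$ into its own fixed-point equation and invoke Lemma \ref{l6} with the weighted norm $\|\cdot\|_{\beta,\beta;\rho}$. Since by definition $u=\mathcal{T}(u,\omega,u_{0})$ on $[0,T]$, Lemma \ref{l6} gives
\[
\|u\|_{\beta,\beta;\rho,0,T}\;\leq\;c_{S}\|u_{0}\|+c_{T}\ltn\omega\rtn_{\beta',0,T}K(\rho)\bigl(1+\|u\|_{\beta,\beta;\rho,0,T}\bigr),
\]
where $K(\rho)\to 0$ as $\rho\to\infty$. The strategy is then to absorb the $\|u\|_{\beta,\beta;\rho}$ from the right-hand side by choosing $\rho$ large.

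First I would explain that since $\omega\in B_{C_{0}^{0,\beta'}}(0,\hat R)$, the Hölder seminorm $\ltn\omega\rtn_{\beta',0,T}$ is bounded by some finite constant $M=M(\hat R,T)$ (by definition of the Fréchet structure on $C_{0}^{0,\beta'}$, the restrictions to compact intervals lie in a bounded set of $C^{\beta'}([-n,n],V)$ for $n\geq T$). Then I would choose $\rho_{0}=\rho_{0}(\hat R,T)$ large enough that
\[
c_{T}\,M\,K(\rho_{0})\;<\;\tfrac12,
\]
which is possible by Lemma \ref{l16} / the property of $K(\rho)$ stated in Lemma \ref{l6}. Substituting into the displayed inequality and rearranging gives
\[
\tfrac12\|u\|_{\beta,\beta;\rho_{0},0,T}\;\leq\;c_{S}\|u_{0}\|+\tfrac12,
\]
so $\|u\|_{\beta,\beta;\rho_{0},0,T}\leq 2c_{S}R+1$.

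Finally, since the weighted norm $\|\cdot\|_{\beta,\beta;\rho_{0},0,T}$ is equivalent to the unweighted $\|\cdot\|_{\beta,\beta,0,T}$ with equivalence constant at most $e^{\rho_{0}T}$, I obtain
\[
\|u\|_{\beta,\beta,0,T}\;\leq\;e^{\rho_{0}T}(2c_{S}R+1)=:C(R,\hat R,T),
\]
which is exactly the claimed bound, uniform over $u_{0}\in B_{V}(0,R)$, $\omega\in B_{C_{0}^{0,\beta'}}(0,\hat R)$ and $u\in\mathcal{F}(u_{0},\omega)$.

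There is really no hard step here once Lemma \ref{l6} is in hand; the only point that requires a sentence of care is verifying that $\ltn\omega\rtn_{\beta',0,T}$ is controlled uniformly on the ball of $C_{0}^{0,\beta'}$, so that $\rho_{0}$ can be chosen \emph{uniformly} in $\omega$. Once $\rho_{0}$ is fixed, the absorption argument is routine, and the equivalence-of-norms step at the end is immediate from the definitions.
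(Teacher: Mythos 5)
Your argument is exactly the paper's: both plug the mild solution into its own fixed-point equation, apply the estimate of Lemma \ref{l6} in the weighted norm, choose $\rho$ large (uniformly over the ball of $\omega$'s, using $\ltn\omega\rtn_{\beta'}\leq\hat R$) so that the factor $c_T\ltn\omega\rtn_{\beta'}K(\rho)$ drops below $\tfrac12$, absorb, and pass back to the unweighted norm by equivalence. The proposal is correct and matches the paper's proof step for step.
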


\begin{proof}
For $u\in\mathcal{F} (u_{0},\omega)$, according to (\ref{eq20}) we have that
\begin{equation}
\label{eq20b}\|u\|_{\beta,\beta;\rho}\le c_{S}\|u_{0}\|+c_{T} \left|  \!
\left|  \! \left|  \omega\right|  \! \right|  \! \right|  _{\beta^{\prime}%
}K(\rho)(1+\|u\|_{\beta,\beta;\rho}),
\end{equation}
where $\lim_{\rho\to\infty}K(\rho)=0$ and $c_{T}$ is a constant that also
depends on the constants related to $F$, $G$ and $S$. Therefore, we can choose
$\rho$ large enough such that
\[
c_{T}\left|  \! \left|  \! \left|  \omega\right|  \! \right|  \! \right|
_{\beta^{\prime}}K(\rho)\leq c_{T}\hat R K(\rho) <\frac{1}{2},
\]
and thus, for all $\omega\in B_{C_{0}^{0,\beta^{\prime}}}(0,\hat R)$, we have
\[
K(\rho)<\frac{1}{2c_{T}\hat R }.
\]
Plugging this information in the estimate (\ref{eq20b}) we have
\[
\|u\|_{\beta,\beta;\rho, 0,T}(1-c_{T} \hat R K(\rho))\leq(c_{S} \|u_{0}%
\|+c_{T} \hat R K(\rho))
\]
and therefore
\[
\|u\|_{\beta,\beta;\rho, 0,T}\leq2c_{S} R+1.
\]
Since $\rho=\rho(T,\hat R)$ and $\|u\|_{\beta,\beta;\rho, 0,T}$ is equivalent
to $\|u\|_{\beta,\beta, 0,T}$, we obtain the result.
\end{proof}

\begin{theorem}
\label{ConvergSolutions}Let $u_{0}^{n}\rightarrow u_{0}$ in $V$ and $\omega\in
C_{0}^{0,\beta^{\prime}}$ be fixed. Then every sequence $(u^{n})_{n\in
\mathbb{N} } \in\mathcal{F}(u_{0}^{n},\omega)$ possesses a subsequence
$u^{n_{k}}$ such that
\[
u^{n_{k}}|_{[0,T]}\rightarrow u|_{[0,T]} \text{ in }C_{\beta}^{\beta
}([0,T],V),
\]
for all $T>0$, where $u\in\mathcal{F}(u_{0},\omega)$.
\end{theorem}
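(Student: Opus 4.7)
The plan is to follow the strategy used in the proof of Theorem \ref{t1}: establish uniform bounds on $(u^n)$, use compactness to extract a convergent subsequence, and then pass to the limit in the integral equation \eqref{Pm}.

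First, I would fix $T>0$ and note that since $u_0^n \to u_0$ in $V$, the set $\mathcal{K}:=\{u_0^n:n\in\mathbb{N}\}\cup\{u_0\}$ is compact in $V$, and in particular $u_0^n \in B_V(0,R)$ for some $R>0$. Fixing also $\hat R \ge \mtn \omega \rtn_{\beta',0,T}$, Lemma \ref{BoundSolutions2} yields a constant $C(R,\hat R,T)$ such that $\|u^n\|_{\beta,\beta,0,T} \le C(R,\hat R,T)$ for all $n$. In particular, every $u^n$ lies in a ball $B \subset C_\beta^\beta([0,T],V)$, and since $u^n = \mathcal{T}(u^n,\omega,u_0^n)$ with $u_0^n \in \mathcal{K}$ and $\omega$ in a ball $\hat B$ of $C^{\beta'}([0,T],V)$, Theorem \ref{t3} tells us that $(u^n)$ lies in the relatively compact set $\mathcal{T}(B,\hat B,\mathcal{K})$ of $C_\beta^\beta([0,T],V)$. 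Thus, after extracting a subsequence (not relabeled), $u^n \to u$ in $C_\beta^\beta([0,T],V)$ for some $u$ with $u(0)=u_0$.

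Next I would verify that the limit $u$ satisfies \eqref{Pm} with initial condition $u_0$, and hence belongs to $\mathcal{F}(u_0,\omega)$ restricted to $[0,T]$. Since $u^n(t) = \mathcal{T}(u^n,\omega,u_0^n)(t)$, it suffices to pass to the limit pointwise in $t\in[0,T]$ in each term. The linear term $S(t)u_0^n \to S(t)u_0$ by the strong continuity of the semigroup. For the drift, the continuity of $F$ and the uniform bound on $\|u^n\|_{\beta,\beta,0,T}$ give, by dominated convergence (as in the continuity argument for $\mathcal{T}$ in the proof of Theorem \ref{t1}), that $\int_0^t S(t-r)F(u^n(r))\,dr \to \int_0^t S(t-r)F(u(r))\,dr$. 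The stochastic integral is handled in exactly the same way as the terms $A_1,A_2,A_3$ in the proof of Theorem \ref{t1}: the Lipschitz property of $G$ together with the convergence $u^n \to u$ in $C_\beta^\beta$ provides pointwise convergence of the integrand and an integrable majorant, so Lebesgue's theorem gives
\[
\int_0^t S(t-r)G(u^n(r))\,d\omega \;\longrightarrow\; \int_0^t S(t-r)G(u(r))\,d\omega .
\]
Hence $u \in \mathcal{F}(u_0,\omega)\big|_{[0,T]}$.

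Finally, to obtain a single subsequence that converges on $[0,T]$ for every $T>0$, I would use a diagonal extraction: apply the above argument successively on $[0,1],[0,2],\dots$, at each step extracting a further subsequence, and then take the diagonal. By the concatenation Lemma \ref{Concatenation} and the uniqueness of continuous extensions, the limits on the increasing intervals agree, and concatenation produces a globally defined $u\in\mathcal{F}(u_0,\omega)$. The diagonal subsequence $u^{n_k}$ then converges to $u$ in $C_\beta^\beta([0,T],V)$ for every $T>0$.

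The main obstacle is the passage to the limit in the stochastic integral: unlike the drift, here one must control three distinct contributions, one of which involves a double difference $G(u^n(r))-G(u(r))-G(u^n(q))+G(u(q))$. Controlling this requires inserting and removing a factor $q^\beta(r-q)^\beta$ to match the $C_\beta^\beta$-norms and then invoking the integrable majorant $(t-r)^{\alpha+\beta'-1}/((r-q)^{1+\alpha-\beta}q^\beta)$ on the triangular domain, exactly as done for the term $A_3$ in Theorem \ref{t1}; the uniform bound from Lemma \ref{BoundSolutions2} is what makes this majorant uniform in $n$.
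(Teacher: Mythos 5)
Your proposal is correct and follows essentially the same route as the paper's proof: uniform bounds from Lemma \ref{BoundSolutions2}, compactness via Theorem \ref{t3} to extract a convergent subsequence, passage to the limit in the integral terms exactly as in the continuity argument of Theorem \ref{t1}, and a diagonal extraction over increasing intervals. The only cosmetic difference is that you invoke the compactness of $\mathcal{T}(B,\hat B,\mathcal{K})$ directly, whereas the paper separately treats the relatively compact integral part $\mathcal{T}^{I}(B,\hat B)$ and the convergence of $S(\cdot)u_{0}^{n}$; these amount to the same thing.
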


\begin{proof}
First we fix $T>0$. It follows from Lemma \ref{BoundSolutions2} that $u^{n}$
is bounded in $C_{\beta}^{\beta}([0,T],V)$, hence by Theorem \ref{t3} the
sequence of mappings
\[
\lbrack0,T]\ni t\mapsto\int_{0}^{t}S(t-r)F(u^{n}(r))dr+\int_{0}^{t}%
S(t-r)G(u^{n}(r))d\omega
\]
is relatively compact in $C_{\beta}^{\beta}([0,T],V)$. On the other hand, the
inequality%
\[
\left\Vert (S(t)-S\left(  s\right)  )(u_{0}^{n}-u_{0})\right\Vert \leq
c_{S}s^{-\beta}(t-s)^{\beta}\left\Vert u_{0}^{n}-u_{0}\right\Vert ,\ 0<s<t\leq
T,
\]
implies that
\begin{equation}
S(\cdot)u_{0}^{n}\rightarrow S(\cdot)u_{0}\text{ in }C_{\beta}^{\beta
}([0,T],V), \label{ConvergTerm1}%
\end{equation}
and therefore we have that $u^{n}($\textperiodcentered$)$ is relatively
compact in $C_{\beta}^{\beta}([0,T],V)$. We conclude that up to a subsequence
\[
u^{n}\rightarrow u\text{ in }C_{\beta}^{\beta}([0,T],V).
\]
By a diagonal argument the result is true for an arbitrary $T>0$.

It remains to check that $u\in\mathcal{F}(u_{0},\omega)$. In view of
\[
\left\Vert S(t)u_{0}^{n}-S(t)u_{0}\right\Vert \rightarrow0,
\]
it suffices to verify that for any $t\in\lbrack0,T]$
\[
\int_{0}^{t}S(t-r)F(u^{n}(r))dr+\int_{0}^{t}S(t-r)G(u^{n}(r))d\omega
\rightarrow\int_{0}^{t}S(t-r)F(u(r))dr+\int_{0}^{t}S(t-r)G(u(r))d\omega,
\]
which has a similar proof as the continuity of $\mathcal{T}$ in Theorem
\ref{t1}.
\end{proof}

\begin{corollary}
\label{CompactValues}The map $\Phi$ has compact values, and thus $\Phi$ is a
strict MNDS.
\end{corollary}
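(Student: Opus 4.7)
The plan is to verify compactness of each fiber $\Phi(t,\omega,u_0)$ by a direct sequential compactness argument, leveraging Theorem \ref{ConvergSolutions} with the constant initial sequence $u_0^n \equiv u_0$. Once compactness is established, closedness follows automatically (since $V$ is a metric space), so $\Phi$ takes values in $P_f(V)$, and combining with Lemma \ref{Cocycle} (which already supplies the identity at $t=0$ and the strict cocycle property) yields the strict MNDS conclusion.

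In detail, I would fix $t\ge 0$, $\omega \in C_0^{0,\beta'}$ and $u_0\in V$, take an arbitrary sequence $(x_n)_{n\in\mathbb{N}}\subset \Phi(t,\omega,u_0)$, and choose representatives $u^n \in \mathcal{F}(u_0,\omega)$ with $u^n(t)=x_n$. Setting $u_0^n := u_0$ for every $n$ and choosing any $T\ge t$, Theorem \ref{ConvergSolutions} provides a subsequence $(u^{n_k})$ and a limit $u \in \mathcal{F}(u_0,\omega)$ with
\[
u^{n_k}|_{[0,T]} \longrightarrow u|_{[0,T]} \quad \text{in } C_\beta^\beta([0,T],V).
\]
Since convergence in $C_\beta^\beta([0,T],V)$ implies uniform convergence on $[0,T]$, in particular $x_{n_k} = u^{n_k}(t) \to u(t)$ in $V$. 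By definition of $\Phi$, the limit $u(t)$ belongs to $\Phi(t,\omega,u_0)$, so the fiber is sequentially compact, hence compact and, a fortiori, closed in $V$.

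Thus $\Phi: \mathbb{R}^+\times\Omega\times V \to P_f(V)$ is well-defined as a multivalued mapping into the non-empty closed subsets of $V$. The remaining requirements of Definition \ref{MNDS} (the identity $\Phi(0,\omega,\cdot)=\mathrm{id}_V$, which is immediate from the definition of mild solution, together with the strict cocycle property) have already been recorded in Lemma \ref{Cocycle}. Consequently $\Phi$ is a strict MNDS, and there is no genuine obstacle here: the whole argument reduces to invoking Theorem \ref{ConvergSolutions} at a single time $t$, since all the nontrivial compactness work — uniform bounds from Lemma \ref{BoundSolutions2}, relative compactness of $\mathcal{T}^I(B,\hat B)$ from Theorem \ref{t3}, and passage to the limit under the pathwise integral — was already carried out there.
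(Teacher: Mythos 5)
Your proof is correct and is precisely the argument the paper intends: the corollary is stated without proof because it follows immediately from Theorem \ref{ConvergSolutions} applied with the constant sequence $u_0^n\equiv u_0$, exactly as you do, with non-emptiness coming from Theorem \ref{t1} and the concatenation lemma, and the strict cocycle property from Lemma \ref{Cocycle}. No discrepancy with the paper's route.
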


As a consequence of the previous results, we can also establish the following
property, which will be crucial when looking at the existence of attractors
for (\ref{P}).

\begin{corollary}
\label{USC}The map $u_{0}\mapsto\Phi(t,\omega,u_{0})$ is upper semicontinuous.
\end{corollary}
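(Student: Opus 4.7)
The plan is to prove upper semicontinuity via its standard sequential characterization for multivalued maps with compact values, using Theorem~\ref{ConvergSolutions} as the workhorse. Since Corollary~\ref{CompactValues} already provides that $\Phi(t,\omega,u_0)$ is a compact subset of $V$, it suffices to establish the following sequential property: whenever $u_0^n \to u_0$ in $V$ and $y_n \in \Phi(t,\omega,u_0^n)$, there exists a subsequence $y_{n_k}$ converging in $V$ to some $y \in \Phi(t,\omega,u_0)$.

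To obtain this sequential property, I would fix $u_0^n \to u_0$, $\omega \in C_0^{0,\beta'}$, and pick $y_n \in \Phi(t,\omega,u_0^n)$. By definition, for each $n$ there is $u^n \in \mathcal{F}(u_0^n,\omega)$ with $u^n(t) = y_n$. Choose any $T \geq t$. Theorem~\ref{ConvergSolutions} then yields a subsequence $u^{n_k}$ converging in $C_\beta^\beta([0,T],V)$ to some $u \in \mathcal{F}(u_0,\omega)$. Since convergence in $C_\beta^\beta([0,T],V)$ entails uniform convergence on $[0,T]$ (via the supremum component of the norm), evaluating at $t$ gives $y_{n_k} = u^{n_k}(t) \to u(t)$ in $V$, and $u(t) \in \Phi(t,\omega,u_0)$ by the definition of $\Phi$.

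The passage from this sequential statement to upper semicontinuity is the usual contradiction argument. Suppose $\Phi(t,\omega,\cdot)$ were not upper semicontinuous at some $u_0$; then there exist an open neighborhood $\mathcal{O} \supset \Phi(t,\omega,u_0)$, a sequence $u_0^n \to u_0$ in $V$, and points $y_n \in \Phi(t,\omega,u_0^n) \setminus \mathcal{O}$. Applying the sequential property above, a subsequence $y_{n_k}$ would converge to some $y \in \Phi(t,\omega,u_0) \subset \mathcal{O}$; but the complement of $\mathcal{O}$ is closed and contains every $y_n$, so $y \notin \mathcal{O}$, a contradiction.

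There is essentially no serious obstacle here, as all the substantive work (uniform boundedness of solutions in $C_\beta^\beta$, compactness of the solution operator, and closure of $\mathcal{F}$ under the limit) has already been done in Lemma~\ref{BoundSolutions2}, Theorem~\ref{t3}, and Theorem~\ref{ConvergSolutions}. The only point deserving care is making sure to extract the convergent subsequence on an interval $[0,T]$ with $T \geq t$ so that evaluation at $t$ is legitimate, and to invoke Corollary~\ref{CompactValues} implicitly to guarantee that the sequential characterization of upper semicontinuity applies in this setting.
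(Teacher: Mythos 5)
Your proposal is correct and follows essentially the same route as the paper: a contradiction argument that extracts, via Theorem~\ref{ConvergSolutions}, a subsequence $y^{n_k}=u^{n_k}(t)$ converging to some $u(t)\in\Phi(t,\omega,u_0)$ inside the excluded neighborhood. If anything, your citation of Theorem~\ref{ConvergSolutions} is the more accurate one --- the paper's proof invokes Theorem~\ref{t3}, which only gives relative compactness of the integral operator and not that the limit lies in $\mathcal{F}(u_0,\omega)$.
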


\begin{proof}
If this is not true, for some $t,\omega,u_{0}$ there exists a neighborhood
$\mathcal{U} $ of $\Phi(t,\omega,u_{0})$ in $V$ and sequences $y^{n}\in
\Phi(t,\omega,u_{0}^{n})$, $u_{0}^{n}\rightarrow u_{0}$, such that
$y^{n}\not \in \mathcal{U} $. But $y^{n}=u^{n}\left(  t\right)  $ with
$u^{n}\in\mathcal{F}(u_{0}^{n},\omega)$, so by Theorem \ref{t3} \ we have that
up to a subsequence $y_{n}\rightarrow y\in\Phi(t,\omega,u_{0})$, which is a
contradiction with $y_{n}\not \in \mathcal{U} $.
\end{proof}

We are now ready to prove upper semicontinuity with respect to all variables.

\begin{theorem}
\label{ConvergSolutions2}Let $u_{0}^{n}\rightarrow u_{0}$ in $V$ and
$\omega^{n}\rightarrow\omega$. Then every sequence $u^{n}\in\mathcal{F}%
(u_{0}^{n},\omega^{n})$ possesses a subsequence $u^{n_{k}}$ such that%
\[
u^{n_{k}}\rightarrow u\in\mathcal{F}(u_{0},\omega)\text{ in }C_{\beta}^{\beta
}([0,T],V),
\]
where $T>0$ is arbitrary.
\end{theorem}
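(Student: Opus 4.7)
The strategy is to combine the compactness argument of Theorem \ref{ConvergSolutions} with a continuous dependence property of the pathwise integral on its integrator.

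First I would fix $T>0$. Since $\omega^{n}\to\omega$ in the Fr\'echet space $C_{0}^{0,\beta^{\prime}}$, the restrictions to $[0,T]$ give a uniform bound $\left\vert \!\left\vert \!\left\vert \omega^{n}\right\vert \!\right\vert \!\right\vert _{\beta^{\prime},0,T}\leq\hat{R}$ together with $\left\vert \!\left\vert \!\left\vert \omega^{n}-\omega\right\vert \!\right\vert \!\right\vert _{\beta^{\prime},0,T}\to 0$. Combined with $\Vert u_{0}^{n}\Vert\leq R$, Lemma \ref{BoundSolutions2} then yields $\Vert u^{n}\Vert_{\beta,\beta,0,T}\leq C(R,\hat{R},T)$ uniformly in $n$. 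Invoking Theorem \ref{t3} with the balls $B=\bar{B}_{C_{\beta}^{\beta}}(0,C)$ and $\hat{B}=\bar{B}_{C^{\beta^{\prime}}}(0,\hat{R})$ shows that $(\mathcal{T}^{I}(u^{n},\omega^{n}))_{n}$ is relatively compact in $C_{\beta}^{\beta}([0,T],V)$, while the estimate $\Vert(S(t)-S(s))(u_{0}^{n}-u_{0})\Vert\leq c_{S}s^{-\beta}(t-s)^{\beta}\Vert u_{0}^{n}-u_{0}\Vert$ gives $S(\cdot)u_{0}^{n}\to S(\cdot)u_{0}$ in $C_{\beta}^{\beta}([0,T],V)$. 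Hence a subsequence $u^{n_{k}}\to u$ in $C_{\beta}^{\beta}([0,T],V)$.

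The main task is to verify $u\in\mathcal{F}(u_{0},\omega)$, i.e., that we may pass to the limit in the mild equation with \emph{both} integrand and integrator varying. The drift term is handled exactly as in Theorem \ref{t1}, using linear growth of $F$ and dominated convergence. For the stochastic integral I would split
\[
\int_{0}^{t}S(t-r)G(u^{n_{k}}(r))\,d\omega^{n_{k}}-\int_{0}^{t}S(t-r)G(u(r))\,d\omega=J_{1}^{n_{k}}+J_{2}^{n_{k}},
\]
where
\[
J_{1}^{n_{k}}=\int_{0}^{t}S(t-r)\bigl[G(u^{n_{k}}(r))-G(u(r))\bigr]\,d\omega^{n_{k}},\qquad J_{2}^{n_{k}}=\int_{0}^{t}S(t-r)G(u(r))\,d(\omega^{n_{k}}-\omega).
\]
The term $J_{1}^{n_{k}}$ is treated exactly as the analogous term in the continuity proof of $\mathcal{T}$ in Theorem \ref{t1}: the uniform bound $\left\vert \!\left\vert \!\left\vert \omega^{n_{k}}\right\vert \!\right\vert \!\right\vert _{\beta^{\prime},0,T}\leq\hat R$, the Lipschitz property of $G$, and $u^{n_{k}}\to u$ in $C_{\beta}^{\beta}$ combine with Lebesgue's and Fubini's theorems on $\{0\leq q\leq r\leq t\}$ to give $J_{1}^{n_{k}}\to 0$.

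The main obstacle is $J_{2}^{n_{k}}$, which requires continuity of the pathwise integral in the integrator. Since $G(u(\cdot))\in C_{\beta}^{\beta}([0,T],L_{2}(V))$ (because $u\in C_{\beta}^{\beta}$ and $G$ is Lipschitz), I can reuse the same chain of estimates that produced Lemma \ref{l6} and Lemma \ref{l0}, applied now to the fixed integrand $S(t-\cdot)G(u(\cdot))$ against the integrator $\omega^{n_{k}}-\omega$; the key point is that every bound there is \emph{linear} in the H\"older seminorm of the integrator, so they yield
\[
\Vert J_{2}^{n_{k}}\Vert\leq c(T)\left\vert \!\left\vert \!\left\vert \omega^{n_{k}}-\omega\right\vert \!\right\vert \!\right\vert _{\beta^{\prime},0,T}\bigl(1+\Vert u\Vert_{\beta,\beta,0,T}\bigr),
\]
which tends to zero by convergence in $C_{0}^{0,\beta^{\prime}}$. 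A standard diagonal extraction over $T\in\mathbb{N}$ then produces a subsequence that converges in $C_{\beta}^{\beta}([0,T],V)$ for every $T>0$, with limit $u\in\mathcal{F}(u_{0},\omega)$.
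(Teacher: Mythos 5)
Your proposal is correct and follows essentially the same route as the paper: uniform bounds from Lemma \ref{BoundSolutions2}, compactness from Theorem \ref{t3}, and the identical splitting of the stochastic integral into a term with varying integrand (handled as in the continuity argument of Theorem \ref{t1}) plus a term with fixed integrand and difference integrator $\omega^{n}-\omega$, bounded linearly in $\left\vert \!\left\vert \!\left\vert \omega^{n}-\omega\right\vert \!\right\vert \!\right\vert _{\beta^{\prime},0,T}$. The only cosmetic difference is that you make explicit the uniform bound $\left\vert \!\left\vert \!\left\vert \omega^{n}\right\vert \!\right\vert \!\right\vert _{\beta^{\prime},0,T}\leq\hat{R}$ needed to invoke Lemma \ref{BoundSolutions2} and the diagonal extraction over $T$, both of which the paper leaves implicit.
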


\begin{proof}
In view of Lemma \ref{BoundSolutions2} the sequence $(u^{n})_{n\in\mathbb{N}}$
is bounded in $C_{\beta}^{\beta}([0,T],V)$ for any $T>0$. Hence, Theorem
\ref{t3}\ implies the existence of $u$ such that up to a subsequence%
\[
u^{n}\rightarrow u\text{ in }C_{\beta}^{\beta}([0,T],V)\text{ for any }T>0.
\]
It remains to prove that $u\in\mathcal{F}(u_{0},\omega)$. This is equivalent
to checking that $u=\mathcal{T}(u,\omega,u_{0})$, which will follow from%
\[
\mathcal{T}(u^{n},\omega^{n},u_{0}^{n})(t)\rightarrow\mathcal{T}%
(u,\omega,u_{0})(t)\text{ in }V\text{ for any }t\in\lbrack0,T].
\]
As it is clear that $S(t)u_{0}^{n}\rightarrow S(t)u_{0}$ in $V$, we only need
to consider the integral terms. For the deterministic integral we can follow
the same steps as in Theorem \ref{t1}. For the stochastic integral, we split
the difference in the following way:%
\begin{align*}
&  \left\Vert \int_{0}^{t}S(t-r)G(u^{n}(r))d\omega^{n}-\int_{0}^{t}%
S(t-r)G(u(r))d\omega\right\Vert \\
&  \leq\left\Vert \int_{0}^{t}S(t-r)(G(u^{n}(r))-G(u(r)))d\omega
^{n}\right\Vert \\
&  +\left\Vert \int_{0}^{t}S(t-r)G(u(r))d(\omega^{n}-\omega)\right\Vert
=:I_{1}+I_{2}.
\end{align*}

Arguing as in the proof of Theorem \ref{t1} we obtain that $I_{1}\rightarrow0$.

For $I_{2}$ we deduce that
\begin{align*}
&  \left\Vert \int_{0}^{t}S(t-r)G(u(r))d(\omega^{n}-\omega)\right\Vert \\
&  \leq c\left\vert \!\left\vert \!\left\vert \omega^{n}-\omega\right\vert
\!\right\vert \!\right\vert _{\beta^{\prime},0,T}\int_{0}^{t}\left(
\frac{\left\Vert S(t-r)\right\Vert _{L(V)}\left\Vert G(u(r))\right\Vert
_{L_{2}(V)}}{r^{\alpha}}\right. \\
&  +\int_{0}^{r}\frac{\left\Vert S(t-r)-S(t-q)\right\Vert _{L(V)}\left\Vert
G(u(r))\right\Vert _{L_{2}(V)}}{(r-q)^{1+\alpha}}dq\\
&  \left.  +\int_{0}^{r}\frac{\left\Vert S(t-q)\right\Vert _{L(V)}\left\Vert
G(u(r))-G(u(q))\right\Vert _{L_{2}(V)}}{(r-q)^{1+\alpha}}dq\right)
(t-r)^{\alpha+\beta^{\prime}-1}dr=:A_{1}+A_{2}+A_{3}.
\end{align*}

The first term is estimated by%
\[
A_{1}\leq c_{S,G} \left\vert \!\left\vert \!\left\vert \omega^{n}%
-\omega\right\vert \!\right\vert \!\right\vert _{\beta^{\prime},0,T}\left(
1+\|u\|_{\infty,0,T}\right)  \int_{0}^{t}r^{-\alpha}(t-r)^{\beta^{\prime
}+\alpha-1}dr,
\]
so $A_{1}\rightarrow0$.

Using (\ref{eq30}) the second term is estimated by%
\begin{align*}
A_{2}  &  \leq c_{S,G}\left\vert \!\left\vert \!\left\vert \omega^{n}%
-\omega\right\vert \!\right\vert \!\right\vert _{\beta^{\prime},0,T}\left(
1+\|u\|_{\infty,0,T}\right)  \int_{0}^{t}\int_{0}^{r}\frac{\left(  r-q\right)
^{\beta}}{\left(  t-r\right)  ^{\beta}(r-q)^{1+\alpha}}dq(t-r)^{\alpha
+\beta^{\prime}-1}dr\\
&  =c_{S,G}\left\vert \!\left\vert \!\left\vert \omega^{n}-\omega\right\vert
\!\right\vert \!\right\vert _{\beta^{\prime},0,T}\left(  1+\|u\|_{\infty
,0,T}\right)  \int_{0}^{t}r^{\beta-\alpha}(t-r)^{\alpha+\beta^{\prime}%
-\beta-1}dr.
\end{align*}
Hence, $A_{2}\rightarrow0.$

For the last term we have%
\begin{align*}
A_{3}  &  \leq c_{S,G}\left\vert \!\left\vert \!\left\vert \omega^{n}%
-\omega\right\vert \!\right\vert \!\right\vert _{\beta^{\prime},0,T}\int%
_{0}^{t}\int_{0}^{r}\frac{q^{\beta}\left\Vert u(r)-u(q)\right\Vert }{q^{\beta
}\left(  r-q\right)  ^{\beta}(r-q)^{1+\alpha-\beta}}dq(t-r)^{\alpha
+\beta^{\prime}-1}dr\\
&  \leq c_{S,G} \left\vert \!\left\vert \!\left\vert \omega^{n}-\omega
\right\vert \!\right\vert \!\right\vert _{\beta^{\prime},0,T}\left\Vert
u\right\Vert _{\beta,\beta,0,T}\int_{0}^{t} r^{-\alpha}(t-r)^{\alpha
+\beta^{\prime}-1}dr.
\end{align*}
Thus, $A_{3}\rightarrow0.$

The proof is now complete.
\end{proof}

Notice that the convergence of $u^{n}$ to $u$ in $C^{\beta}_{\beta}([0,T],V)$
implies the uniform convergence of $u^{n}(t)$ to $u(t)$ in $V$. Then, a direct
consequence of Theorem \ref{ConvergSolutions2} is the following result.

\begin{corollary}
\label{ConvergSequences}If $u_{0}^{n}\rightarrow u_{0}$ in $V$, $\omega
^{n}\rightarrow\omega_{0}$ in $C^{\beta^{\prime}}([0,T],V)$, $t^{n}\rightarrow
t_{0}$ in $\mathbb{R} ^{+}$ and $y^{n}\in\Phi(t^{n},\omega^{n},u_{0}^{n})$,
then there exists a subsequence $y^{n_{k}}$ such that $y^{n_{k}}\rightarrow
y_{0}\in\Phi(t_{0},\omega_{0},u_{0}).$
\end{corollary}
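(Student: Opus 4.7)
The plan is to reduce the claim directly to Theorem~\ref{ConvergSolutions2} and then control the evaluation at the moving times by a standard triangle inequality argument.

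First, I would choose a fixed $T>0$ with $T>\sup_{n}t^{n}$, which exists because the convergent sequence $(t^{n})$ is bounded. By definition of $\Phi$, for each $n$ there is some $u^{n}\in\mathcal{F}(u_{0}^{n},\omega^{n})$ with $y^{n}=u^{n}(t^{n})$. Applying Theorem~\ref{ConvergSolutions2} to the sequences $u_{0}^{n}\to u_{0}$, $\omega^{n}\to\omega_{0}$ (note that $\omega_{0}$ is necessarily the restriction of an element of $C_{0}^{0,\beta^{\prime}}$ because $\omega_{0}$ is the limit of the $\omega^{n}$ in a suitable H\"older space, and Theorem~\ref{ConvergSolutions2} only requires the hypotheses of Theorem~\ref{t3}), there exists a subsequence, which I relabel $(u^{n_{k}})$, and a limit $u\in\mathcal{F}(u_{0},\omega_{0})$ such that
\[
u^{n_{k}}\longrightarrow u\quad\text{in }C_{\beta}^{\beta}([0,T],V).
\]

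Second, since $C_{\beta}^{\beta}([0,T],V)$ embeds continuously in $C([0,T],V)$, the above convergence is uniform on $[0,T]$, and in particular $u\in C([0,T],V)$. For the final step I would write, using the triangle inequality,
\[
\|y^{n_{k}}-u(t_{0})\|=\|u^{n_{k}}(t^{n_{k}})-u(t_{0})\|\le\|u^{n_{k}}(t^{n_{k}})-u(t^{n_{k}})\|+\|u(t^{n_{k}})-u(t_{0})\|.
\]
The first term is bounded by $\|u^{n_{k}}-u\|_{\infty,0,T}$, which tends to $0$ by uniform convergence, and the second tends to $0$ by continuity of $u$ at $t_{0}$ (using $t^{n_{k}}\to t_{0}\in[0,T]$). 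Hence $y^{n_{k}}\to y_{0}:=u(t_{0})$, and since $u\in\mathcal{F}(u_{0},\omega_{0})$ we have $y_{0}\in\Phi(t_{0},\omega_{0},u_{0})$, as desired.

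I do not expect any genuine obstacle: the work has already been done in Theorem~\ref{ConvergSolutions2} and in Lemma~\ref{BoundSolutions2}, and the only subtlety is the evaluation at the varying times $t^{n_{k}}$. That point is handled cleanly by choosing $T$ past all the $t^{n}$ and then using that convergence in $C_{\beta}^{\beta}$ dominates uniform convergence on $[0,T]$, which together with the continuity of the limit $u$ resolves the issue via the triangle inequality displayed above.
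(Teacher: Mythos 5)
Your proposal is correct and follows exactly the route the paper intends: the paper states the corollary as a direct consequence of Theorem \ref{ConvergSolutions2} together with the observation that convergence in $C_{\beta}^{\beta}([0,T],V)$ implies uniform convergence, and your argument simply fills in the (routine) details of evaluating at the moving times $t^{n_k}$ via the triangle inequality and the continuity of the limit $u$. Nothing is missing.
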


We finally can establish the main result of this section.

\begin{theorem}
The mapping $(t,\omega,x)\rightarrow\Phi(t,\omega,x)$ is $\mathcal{B}%
(\mathbb{R}^{+})\otimes\mathcal{F}\otimes\mathcal{B}(V)$ measurable. Hence,
$\Phi$ is a MRDS, where $\mathcal{F=B(}C_{0}^{0,\beta^{\prime}}\mathcal{)}$.
\end{theorem}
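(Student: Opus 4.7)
The plan is to invoke Lemma \ref{sep}: since $C_{0}^{0,\beta^{\prime}}$ was shown in Section \ref{MDS} to be a separable Fr\'{e}chet space (hence a Polish space when equipped with a suitable complete translation-invariant metric generating its topology), and $\mathbb{R}^{+}\times V\times C_{0}^{0,\beta^{\prime}}$ is then also Polish, it suffices to prove that the multivalued map $(t,\omega,x)\mapsto\Phi(t,\omega,x)$ is upper semicontinuous with respect to the three variables jointly.

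To establish joint upper semicontinuity, I would argue by contradiction in the usual way for multivalued maps with compact values (which we already have from Corollary \ref{CompactValues}). Suppose that $\Phi$ is not upper semicontinuous at some point $(t_{0},\omega_{0},u_{0})$. Then there exists an open neighborhood $\mathcal{U}\subset V$ of the compact set $\Phi(t_{0},\omega_{0},u_{0})$ and sequences $t^{n}\rightarrow t_{0}$ in $\mathbb{R}^{+}$, $\omega^{n}\rightarrow\omega_{0}$ in $C_{0}^{0,\beta^{\prime}}$, $u_{0}^{n}\rightarrow u_{0}$ in $V$, together with points $y^{n}\in\Phi(t^{n},\omega^{n},u_{0}^{n})\setminus\mathcal{U}$. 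Note that convergence in $C_{0}^{0,\beta^{\prime}}$ implies convergence in $C^{\beta^{\prime}}([-N,N],V)$ for every $N\in\mathbb{N}$, which is precisely the type of convergence of the driving path required by the estimates used throughout Section \ref{existence}.

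Now I would apply Corollary \ref{ConvergSequences} to the sequence $(y^{n})$: it produces a subsequence $y^{n_{k}}\rightarrow y_{0}$ with $y_{0}\in\Phi(t_{0},\omega_{0},u_{0})$. Since $\mathcal{U}$ is open and contains $\Phi(t_{0},\omega_{0},u_{0})$, the limit $y_{0}$ lies in $\mathcal{U}$, so the tail of $y^{n_{k}}$ must also lie in $\mathcal{U}$, contradicting $y^{n}\notin\mathcal{U}$ for every $n$. This yields joint upper semicontinuity of $\Phi$.

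With this established, Lemma \ref{sep} applied to the Polish space $C_{0}^{0,\beta^{\prime}}$ (taking the product with the Polish spaces $\mathbb{R}^{+}$ and $V$) gives $\mathcal{B}(\mathbb{R}^{+})\otimes\mathcal{B}(C_{0}^{0,\beta^{\prime}})\otimes\mathcal{B}(V)$-measurability of $\Phi$. Combined with the strict cocycle property (Lemma \ref{Cocycle}) and the ergodic metric dynamical system $(C_{0}^{0,\beta^{\prime}},\mathcal{B}(C_{0}^{0,\beta^{\prime}}),\mathbb{P}^{\prime},\theta^{\prime})$ built in Theorem \ref{mt}, this establishes that $\Phi$ is an MRDS. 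The main (though modest) subtlety is to make sure that Corollary \ref{ConvergSequences} is genuinely applicable here, namely that convergence in the Fr\'{e}chet topology of $C_{0}^{0,\beta^{\prime}}$ yields convergence in $C^{\beta^{\prime}}([0,T],V)$ for each fixed $T>0$, which it does by definition of the generating seminorms; the rest is the standard compact-valued upper semicontinuity argument.
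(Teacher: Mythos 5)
Your proposal is correct and follows essentially the same route as the paper: the paper likewise deduces joint upper semicontinuity of $\Phi$ from Corollary \ref{ConvergSequences}, invokes the separability of $C_{0}^{0,\beta^{\prime}}$ together with Lemma \ref{sep} for measurability, and combines this with the strict MNDS property from Corollary \ref{CompactValues}. The only difference is that you spell out the standard compact-valued contradiction argument that the paper leaves implicit.
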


\begin{proof}
From Corollary \ref{CompactValues} we already know that $\Phi$ is a strict
MNDS. On the other hand, from Corollary \ref{ConvergSequences} it follows that
the map $(t,\omega,x)\rightarrow\Phi(t,\omega,x)$ is upper semicontinuous in
the multivalued sense, and, since $C_{0}^{0,\beta^{\prime}}$ is separable,
this property implies the measurability of $\Phi$, see Lemma \ref{sep}. In
other words, $\Phi$ is a strict MRDS.
\end{proof}

Finally, we give an example of a parabolic partial differential equation whose
set of solutions generates a multivalued random dynamical system.

\begin{example}
\textrm{Let $D\subset\mathbb{R} ^{d}$ be a bounded domain with regular enough
boundary. Consider the space $V=L^{2}(D)$ with usual norm denoted by
$\|.\|_{V}$ and a complete orthonormal base given by $(e_{i})_{i\in\mathbb{N}
}$. Assume that $A$ is given by the Laplacian on $D$ with homogenous Dirichlet
boundary condition. The operator $-A$ with domain $D(-A)=H^{2}(D)\cap
H_{0}^{1}(D)$ is a strictly positive and symmetric operator with a compact
inverse, generating an analytic semigroup $S$ in $V$.\newline}

\textrm{We consider $f:\mathbb{R}  \mapsto\mathbb{R} $ to be a continuous
mapping with at most linear growth. We define the nonlinear drift $F:V \mapsto
V$ as the corresponding Nemytskii operator given by
\[
F(u)[x]=f(u(x)), \quad\text{for }u\in V,\, x\in D.
\]
}

\textrm{Now we introduce the diffusion term. In order to do that, let $g: D
\times D \times\mathbb{R}  \mapsto\mathbb{R} $ be a Lipschitz continuous
function in the following sense:
\[
|g(x,y,z_{1})-g(x,y,z_{2})|\leq L(x) |z_{1}-z_{2}|, \quad x,y\in D,\;
z_{1},z_{2}\in\mathbb{R} ,
\]
where $L\in V$. Now we define
\[
G(u)(v)[x]=\int_{D} g(x,y,u(y))v(y)dy,\quad\text{for }u,\,v\in V.
\]
We can see that $G$ is well defined as a mapping $G:V\mapsto L_{2}(V)$, that
is, with values in the Hilbert-Schmidt operators from $V$ into $V$. In fact,
for $u\in V$,
\begin{align*}
\|G(u)\|^{2}_{L_{2}(V)} & =\sum_{j} \|G(u)(e_{j})\|^{2}_{V}=\sum_{j} \int_{D}
|G(u)(e_{j}) [x]|^{2} dx=\sum_{j} \int_{D} \bigg(\int_{D} g(x,y,u(y))e_{j}%
(y)dy \bigg)^{2} dx\\
& =\int_{D} \sum_{j} \bigg(\int_{D} g(x,y,u(y))e_{j}(y)dy \bigg)^{2}
dx\leq\int_{D} \|g(x,\cdot,u(\cdot))\|_{V}^{2} dx <\infty,
\end{align*}
where above we have applied Parseval's inequality. Furthermore, $G$ is
Lipschitz continuous, since for $u_{1}, u_{2}\in V$, in a similar way as
before we obtain
\begin{align*}
\|G(u_{1})-G(u_{2})\|^{2}_{L_{2}(V)} & =\sum_{j} \int_{D} \bigg(\int_{D}
(g(x,y,u_{1}(y))-g(x,y,u_{2}(y)))e_{j}(y)dy \bigg)^{2} dx\\
& \leq\int_{D} \|g(x,\cdot,u_{1}(\cdot))-g(x,\cdot,u_{2}(\cdot))\|_{V}^{2}
dx\\
& \leq\bigg(\int_{D} L^{2}(x)dx \bigg) \|u_{1}-u_{2}\|_{V}^{2}=\|L\|_{V}^{2}
\|u_{1}-u_{2}\|_{V}^{2}.
\end{align*}
}
\end{example}

\section*{Acknowledgement}

M.J. Garrido-Atienza was partially supported by FEDER and Spanish Ministerio
de Econom\'{\i}a y Competitividad, project MTM2015-63723-P and by Junta de
Andaluc\'{\i}a under Proyecto de Excelencia. J. Valero was partially supported
by FEDER and Spanish Ministerio de Econom\'{\i}a y Competitividad, projects
MTM2015-63723-P and MTM2016-74921-P.


\begin{thebibliography}{99}                                                                                               %


\bibitem {Arn98}L. Arnold. \newblock {\em Random dynamical systems}.
\newblock Springer Monographs in Mathematics. Springer-Verlag, Berlin, 1998.

\bibitem {Bauer}H. Bauer. \textit{Probability theory,} de Gruyter Studies in
Mathematics, 23. Walter de Gruyter, Berlin, 1996.

\bibitem {CGSV08}T. Caraballo, M.J. Garrido-Atienza, B. Schmalfu\ss \, and J.
Valero, Non-autonomous and random attractors for delay random semilinear
equations without uniqueness, \textit{Discrete Contin. Dyn. Syst.}, 21 (2008), 415--443.

\bibitem {CGSchV10}T. Caraballo, M.J. Garrido-Atienza, B.~Schmalfu{\ss }\, and
J. Valero, Asymptotic behavior of a stochastic semilinear dissipative
functional equation without uniqueness of solutions,
\newblock {\em Discrete and continuous dynamical systems, series B}, 14(2)
(2010), 439--455.

\bibitem {CHSchV}T. Caraballo, X. Han, Xiaoying, B. Schmalfu\ss and J. Valero,
Random attractors for stochastic lattice dynamical systems with infinite
multiplicative white noise. \textit{Nonlinear Anal.}, 130 (2016), 255--278.

\bibitem {CLV}T. Caraballo, J. A. Langa and J. Valero, Global attractors for
multivalued random dynamical systems, \textit{Nonlinear Anal.}, {48} (2002), 805--829.

\bibitem {CGGSch14}Y. Chen, H. Gao, M.~J. Garrido-Atienza and B. Schmalfu\ss ,
Pathwise solutions of SPDEs driven by H\"{o}lder-continuous integrators with
exponent larger than 1/2 and random dynamical systems, \emph{Discrete Contin.
Dyn. Syst.,} 34(1) (2014), 79--98.

\bibitem {Sinai}I.P. Cornfeld, S.V. Fomin and Y. G. Sinai, \textit{Ergodic
theory}. Springer-Verlag, New York, 1982.

\bibitem {DGNSch17}L.H. Duc, M.J. Garrido-Atienza, A. Neuenkirch and B.
Schmalfu\ss , Exponential stability of stochastic evolution equations driven
by small fractional Brownian motion with Hurst parameter in $(1/2,1)$,
\textit{Journal of Differential Equations,} 264(2) (2018), 1119--1145.

\bibitem {FrizVictoir}P.K.Friz and N.B. Victoir, \textit{Multidimensional
stochastic processes as rough paths}, Cambridge University Press, Cambridge, 2010.

\bibitem {GGSch}M.J. Garrido-Atienza, H. Gao and B. Schmalfu\ss , Random
attractor for stochastic evolution equations driven by fractional Brownian
motion, \textit{SIAM J. Math. Anal. } 46(4) (2014), 2281--2309.

\bibitem {GLS10}M.J. Garrido-Atienza, K. Lu and B. Schmalfu\ss , Random
dynamical systems for stochastic partial differential equations driven by a
fractional Brownian motion. \textit{Discrete Contin. Dyn. Syst. Ser. B}, 14(2)
(2010), 473--493.

\bibitem {GMSch}M.J. Garrido-Atienza, B. Maslowski and B. Schmalfu\ss , Random
attractors for stochastic equations driven by a fractional Brownian motion.
\textit{Internat. J. Bifur. Chaos Appl. Sci. Engrg.} {20}(9) (2010), 2761--2782.

\bibitem {GNSch16}M.~J. Garrido-Atienza, A. Neuenkirch and B.~Schmalfu\ss ,
{Asymptotical stability of differential equations driven by H{\"o}%
lder--continuous paths}, \textit{Journal of Dynamics and Differential
Equations}, 30(1) (2018), 359--377.

\bibitem {GSch11}M.J. Garrido-Atienza and B. Schmalfu\ss , Ergodicity of the
infinite dimensional fractional Brownian motion. {\textit{J}. Dynam.
Differential Equations}, 23(3) (2011), 671--681.

\bibitem {G}{B. Gess}, \emph{Random Attractors for Stochastic Porous Media
Equations perturbed by space-time linear multiplicative noise}, C.R. Acad.
Sci. Paris, Ser. I, 350, (2012), pp.~299--302.

\bibitem {GLR}{B. Gess, W. Liu and M. R\"ockner}, {Random attractors for a
class of stochastic partial differential equations driven by general additive
noise}, \textit{Journal of Differential Equations}, 251(4--5) (2011), 1225--1253.

\bibitem {Gu}{A. Gu}, {Random attractors of stochastic lattice dynamical
systems driven by fractional Brownian motion}, \textit{Int. J. Bifurcation
Chaos}, 23, 1350041 (2013) [9 pages] DOI: 10.1142/S0218127413500417

\bibitem {GrAnh}W. Grecksch and V. V. Anh, A parabolic stochastic differential
equation with fractional Brownian motion input, \textit{Statist. Probab.
Lett.}, 41: 337--345, 1999.

\bibitem {GuLeTin}M. Gubinelli, A. Lejay and S. Tindel, Young integrals and
SPDEs, \emph{Potential Anal. An Inter-national Journal Devoted to the
Interactions between Potential Theory, Probability Theory, Geometry and
Functional Analysis}, 25: 307--326, 2006.

\bibitem {Holte}J.M. Holte. Discrete Gronwall Lemma and Applications.

\bibitem {Kunita90}H.~Kunita.
\newblock {\em Stochastic flows and stochastic differential equations}.
\newblock Cambridge University Press, 1990.

\bibitem {lunardi}A. Lunardi,
\newblock {\em Analytic semigroups and optimal regularity in parabolic
problems}. \newblock Progress in Nonlinear Differential Equations and their
Applications, 16. Birkh\"{a}user Verlag, Basel, 1995.

\bibitem {MasNua03}B. Maslowski and D. Nualart. \newblock Evolution equations
driven by a fractional {B}rownian motion. \newblock {\it J. Funct. Anal.},
202(1) (2003), 277--305.

\bibitem {MasSchm04}B. Maslowski and B. Schmalfu{\ss }, \newblock Random
dynamical systems and stationary solutions of differential equations driven by
the fractional {B}rownian motion. \newblock {\it Stochastic Anal. Appl.},
22(6) (2004), 1577--1607.

\bibitem {NuaRas02}D. Nualart and A. R{\u{a}}{\c{s}}canu.
\newblock Differential equations driven by fractional {B}rownian motion.
\newblock {\it Collect. Math.}, 53(1) (2002), 55--81.

\bibitem {Samko}S.G. Samko, A.A. Kilbas and O.I. Marichev,
\newblock {\em Fractional integrals and derivatives: theory and applications}.
\newblock Gordon and Breach Science Publishers (Switzerland and Philadelphia,
Pa., USA), 1993.

\bibitem {TinTuVi}S. Tindel, C. Tudor and F. Viens, Stochastic evolution
equations with fractional Brownian motion, \textit{Probability Theory and
Related Fields}, 127 (2003), 186--204.

\bibitem {VF}M.I. Vishik and A.V. Fursikov, \textit{Mathematical problems of
statistical hydromechanics}, Kluwer Academic Publishers, Dordrecht, 1988.

\bibitem {W}P. Walters, \textit{An Introduction to Ergodic Theory,} Volume 79
of Graduate Texts in Mathematics. Springer, New York, 1982.

\bibitem {You36}L.C. Young, An integration of H{\"{o}}der type, connected with
Stieltjes integration, \newblock {\it Acta Math.}, 67 (1936), 251--282.

\bibitem {Zah98}M.~Z{\"{a}}hle. \newblock Integration with respect to fractal
functions and stochastic calculus. {I}.
\newblock {\it Probab. Theory Related Fields}, 111(3) (1998), 333--374.
\end{thebibliography}
\end{document}